\documentclass[12pt,draftcls,onecolumn]{article}

\usepackage{amsmath,amsthm}
\usepackage[pdftex]{graphicx,color}
\usepackage{wrapfig}
\usepackage{subfigure}
\usepackage{dsfont}
\usepackage{multirow}
\usepackage{multicol}
\usepackage{algorithmic}
\usepackage[square, sort, numbers]{natbib}\citeindextrue
\usepackage[margin=1in]{geometry}

\newtheorem{theorem}{Theorem}

\newtheorem{corollary}[theorem]{Corollary}

\newtheorem{proposition}{Proposition}

\newtheorem{lemma}{Lemma}

\newcommand{\Exp}{\mbox{${\sf I} \! {\sf E}$}}

\newcommand{\Rp}{\mathds{R}_+}
\newcommand{\R}{\mbox{${\sf I} \! {\sf R}$}}
\newcommand{\floor}[1]{\lfloor#1\rfloor}

\newcommand{\dvp}{\frac{d^+}{dn}\vbar}

\def \vhat{\hat v}
\def \vbar{\bar v}
\def \Vbar{\bar V}
\def \chat{\hat c}
\def \cbar{\bar c}

\begin{document}
%
\title{A New Optimal Stepsize For Approximate Dynamic Programming}

\author{Ilya~O.~Ryzhov\thanks{Ilya O. Ryzhov is with the Robert H. Smith School of Business, University of Maryland, College Park, MD 20742 USA e-mail: iryzhov@rhsmith.umd.edu.}
\and 
Peter I. Frazier\thanks{Peter I. Frazier is with the Department of Operations Research and Information Engineering, Cornell University, Ithaca, NY 14853 USA.}
\and Warren B. Powell\thanks{Warren B. Powell is with the Department of Operations Research and Financial Engineering, Princeton University, Princeton, NJ 08544 USA.}%
}

\maketitle

\begin{abstract}
Approximate dynamic programming (ADP) has proven itself in a wide range of applications spanning large-scale transportation problems, health care, revenue management, and energy systems. The design of effective ADP algorithms has many dimensions, but one crucial factor is the stepsize rule used to update a value function approximation. Many operations research applications are computationally intensive, and it is important to obtain good results quickly. Furthermore, the most popular stepsize formulas use tunable parameters and can produce very poor results if tuned improperly. We derive a new stepsize rule that optimizes the prediction error in order to improve the short-term performance of an ADP algorithm. With only one, relatively insensitive tunable parameter, the new rule adapts to the level of noise in the problem and produces faster convergence in numerical experiments.
\end{abstract}

\section{Introduction}

Approximate dynamic programming (ADP) has emerged as a powerful tool for solving stochastic optimization problems in inventory control \citep{AdKl12}, emergency response \citep{MaReHeTo10}, health care \citep{HeZhPo12}, energy storage \citep{LaMaSe10,LoMi10,Se10}, revenue management \citep{ZhAd09}, and sensor management \citep{Ca97}.  In recent research, ADP has been used to solve a large-scale fleet management problem with 50,000 variables per time period and millions of dimensions in the state variable \citep{SiDaGeGiNiPo09}, and an energy resource planning problem with 175,000 time periods \citep{PoGeLa12}. Applications in operations research are especially demanding, often requiring the sequential solution of linear, nonlinear or integer programming problems. When an ADP algorithm is limited to a few hundred iterations, it is important to find a good solution as quickly as possible, a process that hinges on a stepsize (or learning rate) which controls how new information is merged with existing estimates.


We illustrate the learning process using the language of classical Markov decision processes. Consider an infinite-horizon dynamic program where $V\left(S\right)$ is the value of being in state $S\in\mathcal{S}$ and $C(S,x,W)$ is a possibly random reward earned from being in state $S$, taking action $x \in \mathcal{X}$ and then observing random information $W$. It is well-known \citep{Ho60, Pu94} that we can find the optimal, infinite-horizon value of each state using value iteration, which requires iteratively computing, for each state $S\in\mathcal{S}$,
\begin{equation}
V^n(S) = \max_{x\in\mathcal{X}} \Exp\left[ C(S,x,W) + \gamma  V^{n-1}(S'(S,x,W))|S,x \right] \label{eq:valueiteration}
\end{equation}
where $\gamma < 1$ is a discount factor and $S'$ is a random variable describing the next state given that we were in state $S$, took action $x$, and observed $W \in \mathcal{W}$.


There are many problems where \eqref{eq:valueiteration} is difficult to solve due to the curse of dimensionality. For this reason, there is a tradition, dating back to Bellman's earliest work \citep{BeDr59}, of solving this equation approximately.  This field has evolved under a variety of names including approximate dynamic programming (ADP), neuro-dynamic programming, and reinforcement learning \citep[see][]{BeTs96, SuBa98, SiBaPo04, Po11}. In one major class of ADP methods known as \textit{approximate value iteration}, an observation of the value $V\left(S\right)$ is bootstrapped from an approximation of the downstream value of $S'$, and then used to update that approximation. A generic procedure for computing the observation is given by
\begin{equation}
\vhat^n = \max_{x^n} \sum_{w\in \mathcal{W}} P\left(W^{n+1} = w\,|\,S^n,x^n\right)\left[C\left(S^n,x^n,w\right) + \gamma\Vbar^{n-1}\left(S^{n+1}\left(S^n,x^n,w\right)\right)\right]\mbox{,}\label{eq:basicadp}
\end{equation}
where $\Vbar^{n-1}$ is the value function approximation, and $S^n \in \mathcal{S}$ is our state during the $n$th iteration of the ADP algorithm. We intend (\ref{eq:basicadp}) only to illustrate the concept of constructing $\vhat^n$ from $\Vbar^{n-1}$; in practice, the summation in (\ref{eq:basicadp}) is also approximated. The expectation within the max operator can be avoided using the concept of the post-decision state \citep{VaBeLeTs97,Po11}, which enables us to compute a modified version of (\ref{eq:basicadp}) exceptionally quickly. This makes approximate value iteration particularly useful for online applications (that is, those run in the field), since it is very easy to implement.


Regardless of the particular technique used, we update $\Vbar^{n-1}$ by smoothing it with the new observation $\vhat^n$, obtaining
\begin{equation}
\Vbar^n\left(S^n\right) = \left(1-\alpha_{n-1}\right)\Vbar^{n-1}\left(S^n\right) + \alpha_{n-1}\vhat^n\label{eq:basicsmoothing}
\end{equation}
where $0 < \alpha_{n-1} \leq 1$ is a stepsize (or learning rate). Note again that (\ref{eq:basicsmoothing}) uses statistical bootstrapping, where the estimate of the value $\vhat^n$ depends on a statistical approximation $\Vbar^{n-1}\left(S^n\right)$. This is the defining characteristic of approximate value iteration, which has proven to be very successful in broad classes of operations research applications. The reinforcement learning community uses a closely related algorithm known as Q-learning \citep{watkins1992}, which uses a similar bootstrapping scheme to learn the value of a state-action pair. In both approximate value iteration and Q-learning, the stepsize plays two roles. First, it smooths out the effects of noise in our observations (the lower the stepsize, the smoother the approximation). Second, it determines how much weight is placed on new rewards (the higher the stepsize, the more a new reward is worth). This dual role of the stepsize is specific to bootstrapping-based methods, whose ease of use makes them a natural approach for large-scale applications in operations research where rate of convergence is crucial. See e.g. \cite{ToPo06} or ch. 14 of \cite{Po11} for more examples of such applications. The method of using a stepsize to update the value of a state-action pair is based on the field of stochastic approximation; see \cite{Wa69} and \cite{KuYi97} for thorough treatments of this field. \cite{Ts94} and \cite{JaJoSi94} were the first to apply this theory to show the convergence of an ADP algorithm when the stepsize rule satisfies certain conditions.

In general, ADP practice shows a strong bias toward simple rules that are easy to code. One example of such a stepsize rule is $\alpha_{n-1}=1/n$, which has the effect of averaging over the observations $\vhat^n$. In fact, this rule satisfies the necessary theoretical conditions for convergence, which has made it into a kind of default rule \citep[see e.g.][for a recent example]{azar2011}. However, the literature has acknowledged \citep{Sz97,EvMa03,Go08} that the $1/n$ rule can produce very slow convergence; one of our contributions in this paper is to derive new theoretical bounds providing insights into the weakness of this rule. For this reason, many practitioners use a simple constant stepsize such as $\alpha_{n-1} \equiv 0.1$. Nonetheless, the constant stepsize can produce slow initial convergence for some problems and volatile, non-convergent estimates in the limit. It is also easy to construct problems where any single constant will work poorly. \cite{simao2009} solves an inventory problem for spare parts, where a high-volume spare part may remain in inventory for just a few days, while a low-volume part may remain in inventory for hundreds of days. A small stepsize will work very poorly with a low-volume part, while large stepsizes fail to dampen the noise, and are not appropriate for high-volume parts.

Such applications require the use of stochastic stepsize rules, where $\alpha_{n-1}$ is computed adaptively from the error in the previous prediction or estimate. These methods include the stochastic gradient rule of \cite{BeMePr90} (and other stochastic gradient algorithms, e.g. by \cite{DuHaSi11} and \cite{ScZhLe13}), the Delta-Bar-Delta rule of \cite{Sutton92} and its variants \citep{MaSuDePi12}, and the Kalman filter \citep{Sten94, ChVr06}. A detailed survey of both deterministic and stochastic stepsizes is given in \cite{GePo06}, with additional references in \cite{MaSuDePi12}. The main challenge faced by these methods is that the prediction error is difficult to estimate in a general MDP, often resulting in highly volatile stepsizes with large numbers of tunable parameters. A recent work by \cite{HuLe07} adopts a different approach based on the relative frequency of visits to different states, but is heavily tied to on-policy learning, whereas practical implementations often use off-policy learning to promote exploration \citep{SuSzMa08}. In all of these cases, the literature largely ignores the dependence of the observation $\vhat^n$ on the previous value function approximation $\Vbar^{n-1}$, arguably the defining feature of approximate value iteration. For instance, the OSA algorithm of \cite{GePo06}, which can be viewed as a bias-adjusted Kalman filter \cite[or BAKF, the name used in][]{Po11}, assumes independent observations.

We approach the problem of stepsize selection by studying an MDP with a single state and action. This model radically streamlines the behaviour of a general DP, but retains key features of DP problems that are crucial to stepsize performance, namely the bias-variance tradeoff and the dependence of observations. We use this model to make the following contributions: 1) We derive easily computable, convergent upper and lower bounds on the time required for convergence under $1/n$, demonstrating that the rate of convergence of $1/n$ can be so slow that this rule should almost never be used for ADP. 2) We derive a closed-form, easily computable stepsize rule that is optimal for the single-state, single-action problem. This is the first stepsize rule to account for the dependence of observations in ADP. The formula requires no tuning, and is easy to apply to a general multi-state problem. 3) We analyze the convergence properties of our stepsize rule. We show that it does not stall, and declines to zero in the limit. This is the first optimal stepsize for ADP that provably has these properties. 4) We present numerical comparisons to other stepsizes in a general ADP setting and demonstrate that, while popular competing strategies are sensitive to tunable parameters, our new rule is robust and fairly insensitive to its single parameter. This last property is of vital practical importance, allowing developers to focus on approximation strategies without the concern that poor performance may be due to a poorly tuned stepsize formula.

Section \ref{sec:setup} defines the optimality of a stepsize, and illustrates the need for an optimal stepsize rule by theoretically demonstrating the poor performance of $\alpha_{n-1}=1/n$ on our single-state, single-action problem. Section \ref{sec:opt} derives the optimal stepsize rule for the approximate value iteration problem, and shows how it can be used in a more general ADP setting. Section \ref{sec:mainexp} presents a numerical sensitivity analysis of the new rule in the single-state problem. Finally, Sections \ref{sec:ext}-\ref{sec:storage} present numerical results for more general ADP examples.

\section{Setup and motivation}\label{sec:setup}

Section \ref{sec:model} lays out the stylized ADP model used for our analysis, and defines the optimality of a stepsize in this setting. Section \ref{sec:slow} motivates the need for an optimal stepsize by showing that the commonly used stepsize $\alpha_{n-1}=1/n$ produces unusably slow convergence in our model.

\subsection{Mathematical model}\label{sec:model}

In the dynamic programming literature, the notion of an ``optimal'' stepsize most commonly refers to the solution to the optimization problem
\begin{equation}\label{eq:predictionerror}
\min_{\alpha_{n-1}\in \left[0,1\right]} \Exp\left[ \left(\Exp\hat{v}^n - \Vbar^{n}\left(S^n\right)\right)^2\right].
\end{equation}
We refer to the quantity inside the expectation as the \textit{prediction error}. Recall from (\ref{eq:basicadp}) that $\vhat^n$ serves as an observation (albeit an approximate one) of the value of being in state $S^n$. The prediction error is the squared difference between this observation and the current estimate $\Vbar^{n-1}\left(S^n\right)$ of the value.

The prediction error is a standard objective for an optimal stepsize rule, and is used in reinforcement learning (e.g. the IDBD algorithm of \citep{Sutton92}, used e.g. by \cite{Silver13} in RL), stochastic gradient methods \citep{BeMePr90}, Kalman filtering \citep{ChVr06}, and signal processing \citep{GePo06}. The main challenge faced by researchers is that, for a general dynamic program, (\ref{eq:predictionerror}) cannot be solved in closed form. For this reason, most error-minimizing stepsize algorithms \citep[including very recent work in this area; see][for an overview]{MaSuDePi12} adopt a gradient descent approach, in which the stepsize is adjusted based on an estimate of the derivative of (\ref{eq:predictionerror}) with respect to $\alpha_{n-1}$. The resulting stepsize algorithms are no longer optimal, and can exhibit volatile behaviour in the early stages. Many of them require extensive tuning.

While we also seek to minimize prediction error, we adopt a different approach. Instead of approximating (\ref{eq:predictionerror}) in the general case, we consider a stylized dynamic program with a single state and a single action, where (\ref{eq:predictionerror}) has a closed-form solution. In this setting, (\ref{eq:valueiteration}) reduces to $v^*=c+\gamma v^*$ and has the solution $v^* = \frac{c}{1-\gamma}$. The ADP equations (\ref{eq:basicadp}) and (\ref{eq:basicsmoothing}) reduce to
\begin{eqnarray}
\vhat^{n} &=& \chat^{n} + \gamma\vbar^{n-1}\mbox{,}\label{eq:vhat}\\
\vbar^n &=& \left(1-\alpha_{n-1}\right)\vbar^{n-1} + \alpha_{n-1}\vhat^{n}\mbox{,}\label{eq:vbar}
\end{eqnarray}
where the random variables $\chat^n$, $n = 1,2,...$ are independent and identically distributed, with $c = \Exp\chat^n$ and $\sigma^2 = Var\left(\chat^n\right)$. The prediction error in this setting reduces to the formulation
\begin{equation*}
\min_{\alpha_{n-1} \in \left[0,1\right]} \Exp\left[\left(\Exp \vhat^n - \vbar^n\right)^2\right]\mbox{.}
\end{equation*}

Although the system described by (\ref{eq:vhat}) and (\ref{eq:vbar}) is much simpler than a general MDP, it nonetheless retains two key features that are fundamental to all DPs:
\begin{itemize}
\item[1)] A tradeoff between bias and variance in the approximation $\vbar^n$, governed by the stepsize $\alpha_{n-1}$;
\item[2)] Dependence of the bootstrapped observation $\vhat^n$ on the approximation $\vbar^{n-1}$.
\end{itemize}
In Section \ref{sec:finitehorizon}, we consider a finite-horizon extension that captures a third key feature:
\begin{itemize}
\item[3)] Time-dependence of the bias-variance tradeoff.
\end{itemize}
Of course, in a general DP, these issues exhibit much more complex behaviour than in the streamlined single-state, single-action model. However, the stylized model is still subject to these issues, and can provide insight into how they can be resolved in the general case. The main advantage offered by this model is that it allows us to address these issues using a closed-form solution for the optimal stepsize, explicitly capturing the relationship between the bias-variance tradeoff and the dependence of the observations. We will then be able to adapt the solution of the single-state problem to general dynamic programs (in Section \ref{sec:unknownc}).

We briefly note that we allow the observation $\chat^n$ in (\ref{eq:vhat}) to be random. At a very high level, this allows us to view the single-state, single-action problem as a stand-in for an infinite-horizon MDP in steady state. Recall the well-known property of Markov decision processes that a) the policy produced by the basic value iteration update in (\ref{eq:valueiteration}) converges to an optimal policy and b) the probability that we are in some state $s$ converges to a steady-state distribution \citep[see][]{Pu94}. As a result, the unconditional expectation of the contribution earned at each iteration approaches a constant that we can denote by $c$. Again, while the single-state, single-action model cannot capture all of the complexity of a general DP, it allows us to distill a large class of DPs into a simple and elegant archetype capturing key behaviours common to that class.

\subsection{Motivation: slow convergence of $\alpha_{n-1}=1/n$}\label{sec:slow}

The research on error-minimizing stepsizes is motivated by the poor practical performance of simple stepsize rules. Among these, the most notable is $\alpha_{n-1} = 1/n$, which produces provably convergent estimates of the value function \citep{Ts94}, and thus persists in the literature as a kind of default rule, as evidenced by its recent use in e.g. \cite{azar2011}. The theoretical worst-case convergence rate of this stepsize is known to be slow \citep{EvMa03}. We now derive new bounds that are easier to compute and demonstrate that the $1/n$ rule is unusably slow even for the stylized single-state, single-action model of Section \ref{sec:model}.

Consider the ADP model of (\ref{eq:vhat}) and (\ref{eq:vbar}). For simplicity, we assume in this discussion that $\chat^n = c$ for all $n$, that is, all the rewards are deterministic. If an algorithm performs badly in this deterministic case, we generally expect it to perform even worse when $\chat^n$ is allowed to be random, since increasing noise generally slows convergence. We briefly summarize our results and give a numerical illustration; the full technical details can be found in the Appendix.

\begin{theorem}\label{eq:LowerBound0}
$\vbar^n \ge \frac{c}{1-\gamma}\left( 1 - (n+1)^{-(1-\gamma)}\right)$ for $n = 0,1,2,...$.
\end{theorem}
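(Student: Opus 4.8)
The plan is to track the error $e^n = v^* - \vbar^n$, where $v^* = \frac{c}{1-\gamma}$, and to show that under $\alpha_{n-1}=1/n$ it contracts by an explicit factor at each step. First I would substitute $\alpha_{n-1}=1/n$ and the deterministic reward $\chat^n = c$ into \eqref{eq:vhat}--\eqref{eq:vbar}, collecting terms to obtain the scalar recursion $\vbar^n = \left(1 - \frac{1-\gamma}{n}\right)\vbar^{n-1} + \frac{c}{n}$. Subtracting both sides from $v^*$ and using the identity $\frac{c}{n} = \frac{(1-\gamma)v^*}{n}$, the inhomogeneous term cancels exactly against part of $v^*$, leaving the clean multiplicative recursion $e^n = \left(1 - \frac{1-\gamma}{n}\right)e^{n-1}$.

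Taking the natural initialization $\vbar^0 = 0$, so that $e^0 = v^*$, I would unroll this recursion into the exact product form $e^n = v^* \prod_{k=1}^n \left(1 - \frac{1-\gamma}{k}\right)$. Since $\gamma \in (0,1)$, each factor lies in $(0,1)$, so $e^n$ is positive, and the theorem's claim $\vbar^n \ge v^*\bigl(1 - (n+1)^{-(1-\gamma)}\bigr)$ is then equivalent to the single product estimate $\prod_{k=1}^n \left(1 - \frac{1-\gamma}{k}\right) \le (n+1)^{-(1-\gamma)}$.

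To establish this bound I would pass to logarithms and apply the elementary inequality $\ln(1-x) \le -x$ to each factor, giving $\sum_{k=1}^n \ln\!\left(1 - \frac{1-\gamma}{k}\right) \le -(1-\gamma)\sum_{k=1}^n \frac{1}{k} = -(1-\gamma)H_n$, where $H_n$ is the $n$th harmonic number. Then I would invoke the sharp harmonic-sum bound $H_n \ge \ln(n+1)$, obtained by comparing the sum to $\int_1^{n+1} x^{-1}\,dx$, so the right-hand side is at most $-(1-\gamma)\ln(n+1)$. Exponentiating recovers $\prod_{k=1}^n \left(1 - \frac{1-\gamma}{k}\right) \le (n+1)^{-(1-\gamma)}$, which is exactly what is needed.

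The conceptual heart of the argument is the first step: recognizing that, under $\alpha_{n-1}=1/n$ with deterministic rewards, the inhomogeneous smoothing update collapses to a purely multiplicative recursion on the error, which is then solvable in closed form. After that, the only real choice is which product estimate to use, and the one mild obstacle is that recovering the \emph{exact} exponent $(n+1)^{-(1-\gamma)}$ (rather than a looser $n^{-(1-\gamma)}$) requires the sharper bound $H_n \ge \ln(n+1)$ instead of the cruder $H_n \approx \ln n$. The assumption $\chat^n = c$ is precisely what removes the noise and makes the error recursion deterministic and hence exactly solvable.
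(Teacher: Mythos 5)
Your proof is correct, but it takes a genuinely different route from the paper. The paper never works with the discrete recursion directly: it extends $\vbar^n$ to a piecewise-linear interpolation $\vbar(n)$ on $[0,\infty)$, bounds its right derivative below by $L(v,n)=\frac{1}{n+1}\left(c-(1-\gamma)v\right)$ using monotonicity of the sequence, and then solves the comparison ODE $l'(n)=L(l(n),n)$ with $l(0)=0$ by an integrating factor, obtaining $l(n)=\frac{c}{1-\gamma}\left(1-(n+1)^{-(1-\gamma)}\right)$ as a lower envelope. You instead observe that the error $e^n=v^*-\vbar^n$ satisfies the \emph{exact} multiplicative recursion $e^n=\left(1-\frac{1-\gamma}{n}\right)e^{n-1}$, unroll it to $e^n=v^*\prod_{k=1}^n\left(1-\frac{1-\gamma}{k}\right)$, and control the product via $\ln(1-x)\le -x$ together with $H_n\ge \ln(n+1)$. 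Your argument is more elementary and arguably more informative, since it produces a closed-form expression for the error of which the theorem's bound is a clean consequence; it also handles $n=0$ trivially (empty product). What the paper's heavier ODE machinery buys is reuse: the same interpolation, derivative bounds, and integrating-factor technique deliver the companion upper bound of Theorem \ref{cor:ChooseOne0}, where the relevant discrete inequality points the other way and does not telescope into an exact product, so a unified continuous-time comparison framework pays off there. Both proofs rely on the same two structural facts --- the deterministic collapse of the update under $\chat^n=c$ and the initialization $\vbar^0=0$ --- so your identification of these as the conceptual heart matches the paper's reading of the problem.
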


\begin{theorem}\label{cor:ChooseOne0}
$\vbar^n \le \frac{c}{1-\gamma}\left[ 1 - bn^{-(1-\gamma)} - \frac{1-\gamma}{\gamma}\frac{1}{n} \right]$
for all $n = 1,2,...$ where $b = \frac{\gamma^2 + \gamma - 1}{\gamma}$.
\end{theorem}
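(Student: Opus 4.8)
The plan is to track the error $u^n := \frac{c}{1-\gamma} - \vbar^n$ rather than $\vbar^n$ itself, since an upper bound on $\vbar^n$ is exactly a lower bound on $u^n$. Writing $\beta := 1-\gamma$ and substituting (\ref{eq:vhat}) with $\chat^n\equiv c$ into (\ref{eq:vbar}) with $\alpha_{n-1}=1/n$, a short calculation shows the fixed point $\frac{c}{1-\gamma}$ cancels and the error obeys the clean multiplicative recursion $u^n = u^{n-1}\bigl(1-\tfrac{\beta}{n}\bigr)$, with $u^0=\frac{c}{1-\gamma}$ under the standard initialization $\vbar^0=0$ (consistent with Theorem~\ref{eq:LowerBound0} at $n=0$). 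Assuming $c\ge0$ so that $\frac{c}{1-\gamma}>0$, the claim is then equivalent to the lower bound $u^n \ge \frac{c}{1-\gamma}\,g(n)$, where I abbreviate $g(n):=b\,n^{-\beta}+\frac{\beta}{\gamma}\frac{1}{n}$.

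I would prove this by induction on $n$. For the base case $n=1$, note $g(1)=b+\frac{\beta}{\gamma}=\frac{\gamma^2+\gamma-1}{\gamma}+\frac{1-\gamma}{\gamma}=\gamma$, while the recursion gives $u^1=u^0(1-\beta)=\frac{c}{1-\gamma}\gamma$, so the bound holds with equality. For the inductive step, using $u^{n-1}\ge\frac{c}{1-\gamma}g(n-1)$ together with $1-\tfrac{\beta}{n}>0$ (as $\beta<1\le n$), it suffices to establish the deterministic inequality $g(n-1)\bigl(1-\tfrac{\beta}{n}\bigr)\ge g(n)$ for all $n\ge2$. The virtue of the coefficient $\frac{\beta}{\gamma}$ is that the $1/n$ parts telescope: their net contribution to $g(n-1)(1-\beta/n)-g(n)$ collapses to $\frac{\beta}{\gamma}\cdot\frac{1-\beta}{n(n-1)}=\frac{\beta}{n(n-1)}$, so the step reduces to proving
\[
f(n):=b\,h(n)+\frac{\beta}{n(n-1)}\ge0,\qquad h(n):=(n-1)^{-\beta}\Bigl(1-\tfrac{\beta}{n}\Bigr)-n^{-\beta}.
\]

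The remaining work, and the main obstacle, is controlling the power-function term $h(n)$. When $b\ge0$ (equivalently $\gamma\ge\frac{\sqrt5-1}{2}$) both summands of $f(n)$ are nonnegative, since $h(n)>0$ follows from convexity of $t\mapsto t^{-\beta}$, and the step is immediate. The delicate regime is $b<0$, where the negative term $b\,h(n)$ must be dominated by $\frac{\beta}{n(n-1)}$. A crude one-sided secant estimate such as $h(n)\le\frac{\beta(n-1)^{-\beta-1}}{n}$ has the correct order $n^{-\beta-2}$ but an overly large constant, overestimating $h(n)$ by a factor of order $1/\gamma$; since $|b|$ itself blows up like $1/\gamma$ as $\gamma\to0$, this is not enough. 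The point is a second-order cancellation: writing $h(n)=n^{-\beta}\bigl[(1-\tfrac1n)^{-\beta}(1-\tfrac{\beta}{n})-1\bigr]$ and expanding the bracket, the $O(1/n)$ terms cancel and the leading term is $\tfrac{\beta\gamma}{2}n^{-2}$, so in fact $h(n)\approx\tfrac{\beta\gamma}{2}n^{-\beta-2}$ with the crucial extra factor $\gamma$.

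The plan is therefore to make this precise with a Taylor-with-remainder (or refined convexity) bound on the bracket, valid uniformly for $1/n\in(0,\tfrac12]$, and then invoke the algebraic identity $|b|\gamma=1-\gamma-\gamma^2<1$ to conclude $|b|\,h(n)\le\frac{\beta}{n(n-1)}$ with room to spare, closing the induction. The step I expect to be most technical is certifying the second-order bound at small $n$, where the leading asymptotic constant alone does not suffice and one must retain and control the remainder term; the numerical margin (the ratio is bounded by $\tfrac12$ asymptotically) suggests there is enough slack for an explicit elementary estimate to succeed.
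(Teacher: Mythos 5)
Your proposal is correct, and it reaches the theorem by a genuinely different route than the paper. The paper's proof passes to a piecewise-linear interpolation $\vbar(n)$ of the discrete iterates, bounds its right derivative by a function $U(v,n)=\frac{1}{n}\left(c-(1-\gamma)v+\frac{c}{n}\right)$ (using concavity of the interpolation and the crude bound $\vbar\le c/(1-\gamma)$ to absorb the $\floor{n}$ offsets), and then solves the resulting linear ODE with an integrating factor; the constant $b=\frac{\gamma^2+\gamma-1}{\gamma}$ emerges from the boundary condition $u(1)=\vbar(1)=c$, and the comparison principle that the ODE solution dominates $\vbar$ is asserted rather than proved. Your argument is a purely discrete induction on the exact product form $u^n=u^{n-1}\bigl(1-\frac{\beta}{n}\bigr)$, with the same anchor $\vbar^1=c$ supplying the base case with equality; it avoids the interpolation machinery and the unproved comparison lemma entirely, at the cost of having to verify the one-step inequality $g(n-1)\bigl(1-\frac{\beta}{n}\bigr)\ge g(n)$ by hand. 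The step you flagged as most technical does close cleanly: writing $\phi(x)=(1-x)^{-\beta}(1-\beta x)$ one gets the exact identity $\phi'(x)=\beta\gamma\, x(1-x)^{-\beta-1}$, which simultaneously gives $h(n)=n^{-\beta}\bigl(\phi(1/n)-1\bigr)>0$ (settling the $b\ge 0$ case; note the naive tangent-line convexity bound at $n$ actually points the wrong way here, so this derivative identity is the cleaner justification) and, by integrating with the monotone weight $(1-t)^{-\beta-1}\le(1-x)^{-\beta-1}$, the second-order bound
\begin{equation*}
h(n)\;\le\;\frac{\beta\gamma}{2}\,\frac{1}{n\,(n-1)^{1+\beta}},
\end{equation*}
so that for $b<0$ the required inequality $|b|\,h(n)\le\frac{\beta}{n(n-1)}$ reduces to $\frac{|b|\gamma}{2}\le(n-1)^{\beta}$, which holds for all $n\ge2$ since $|b|\gamma=1-\gamma-\gamma^2<1$. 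What the paper's approach buys is the more general Theorem \ref{thm:UpperBound} with an arbitrary anchor $n_0$ (useful because for $b<0$ the $n_0=1$ bound is weaker than the trivial one, and larger $n_0$ tightens it); what yours buys is a self-contained elementary proof with an explicit, fully justified induction and equality at the base case.
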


In our numerical illustration, we fix $c$ to 1, because it only enters as a multiplicative factor in the bounds and in the true value function as well.  Thus $\gamma$ is our only free parameter.  The results are plotted on a log-scale in Figure \ref{fig:results}. As $n$ grows large the upper and lower bounds both approach the limiting value $v^* = 1/(1-\gamma)$. Convergence slows as $\gamma$ increases.

\begin{figure}[t]
\centering
\subfigure[$\gamma=0.7$.]{
\includegraphics[width=3in]{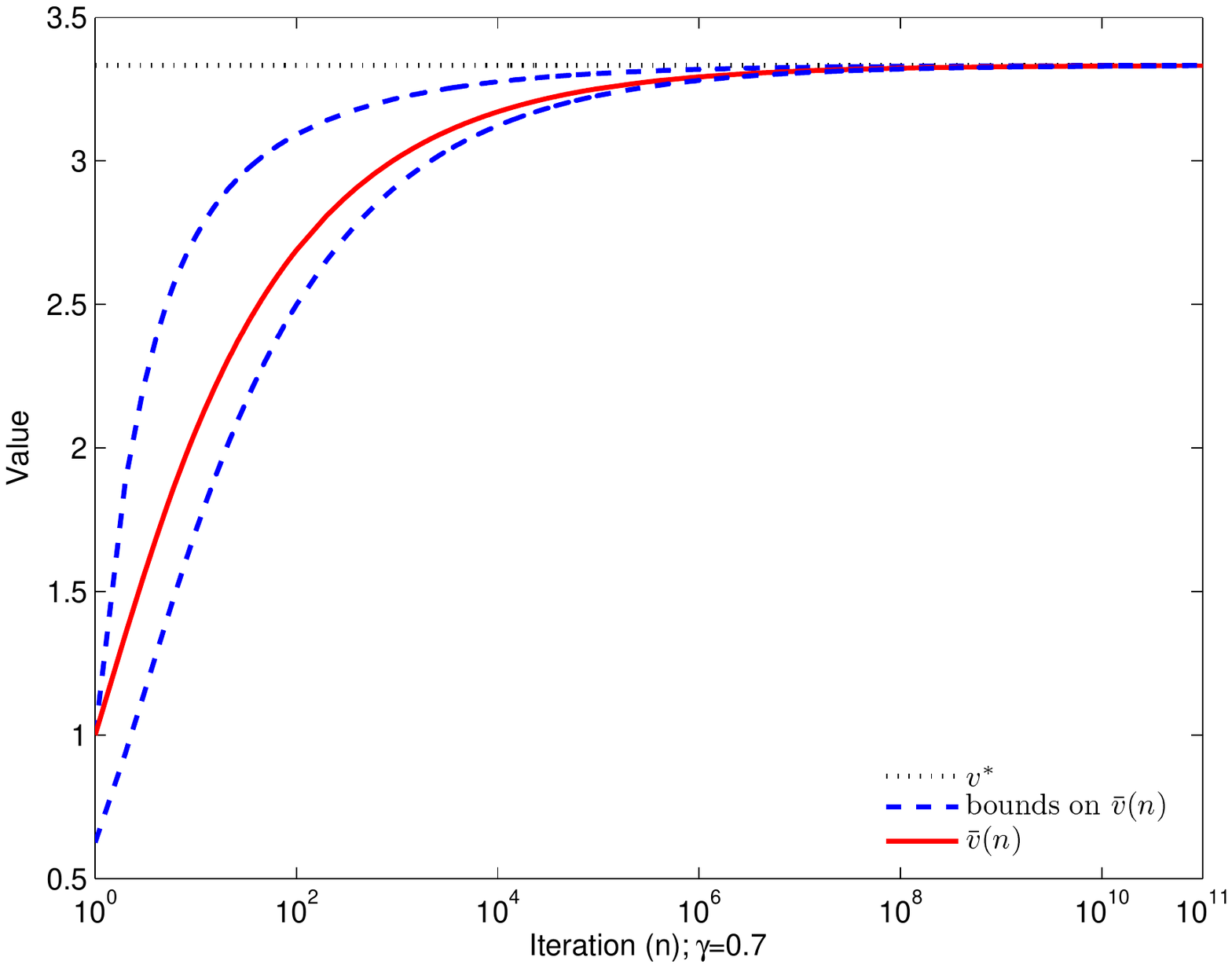}
}
\subfigure[$\gamma=0.8$.]{
\includegraphics[width=3in]{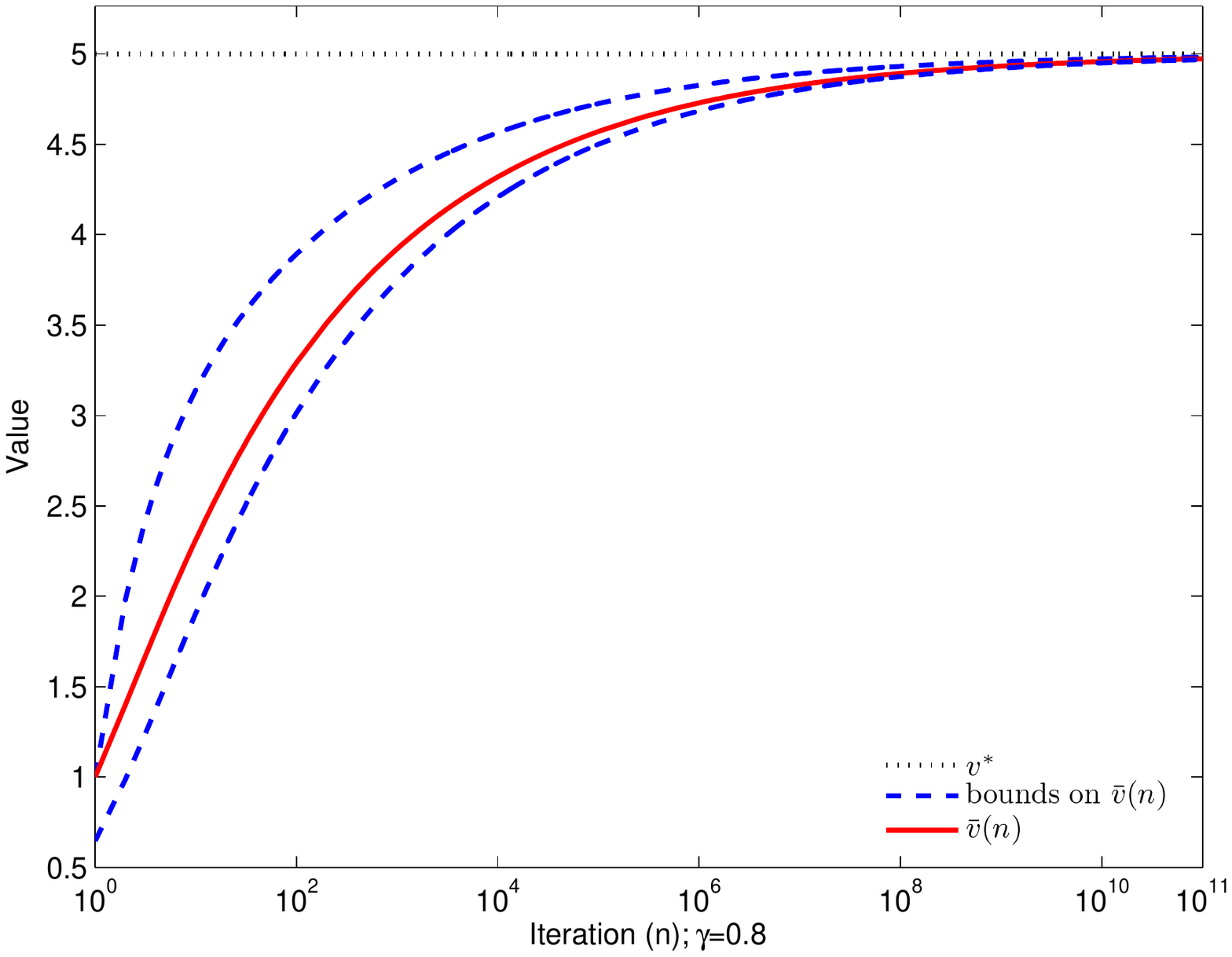}
}
\subfigure[$\gamma=0.9$.]{
\includegraphics[width=3in]{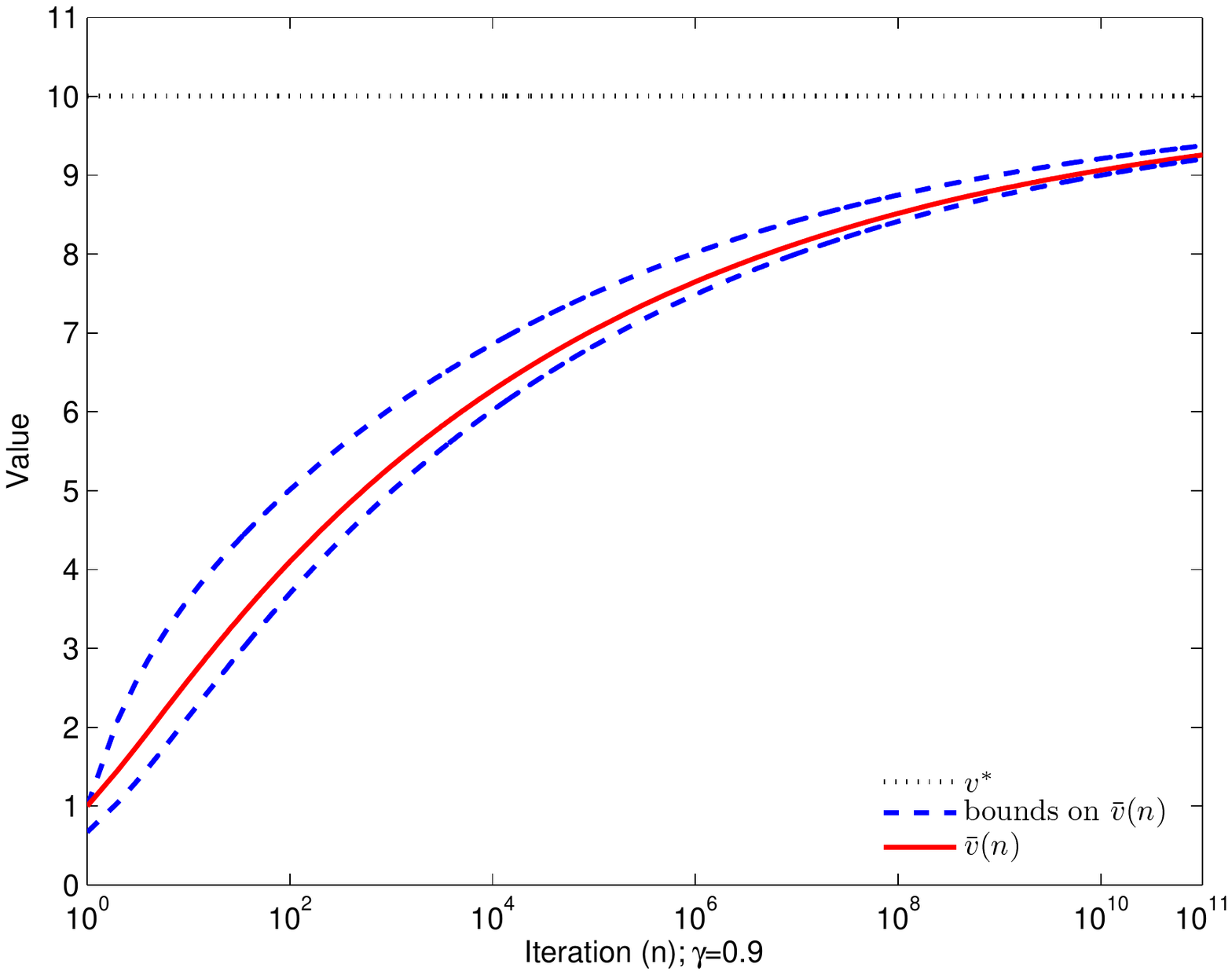}
}
\subfigure[$\gamma=0.95$.]{
\includegraphics[width=3in]{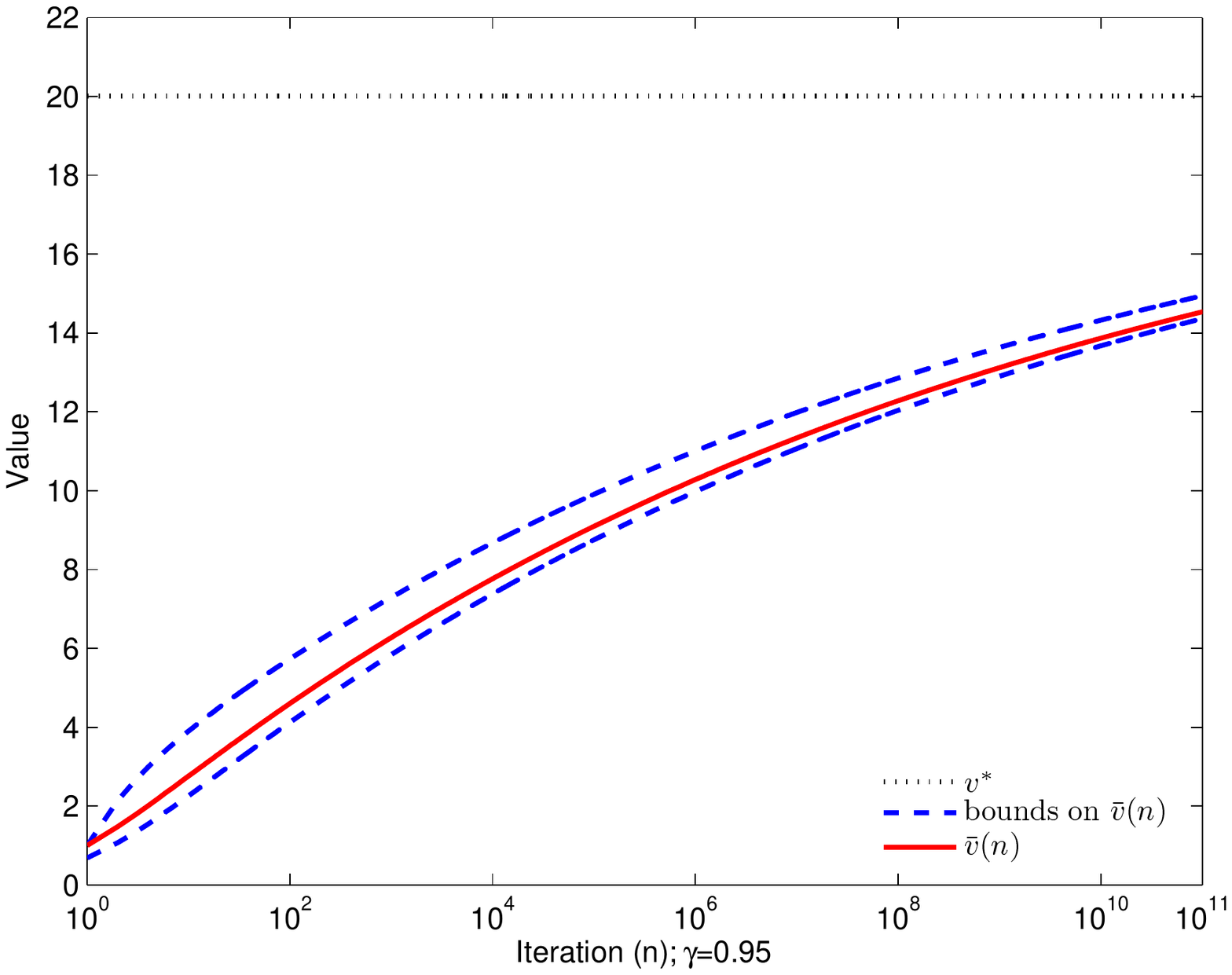}
}
\caption{$\vbar(n)$ and its upper and lower bounds for different discount factors.}\label{fig:results}
\end{figure}

\begin{figure}[t]
\centering
\subfigure[$0.65 \leq \gamma \leq 1$.]{
\includegraphics[width=3in]{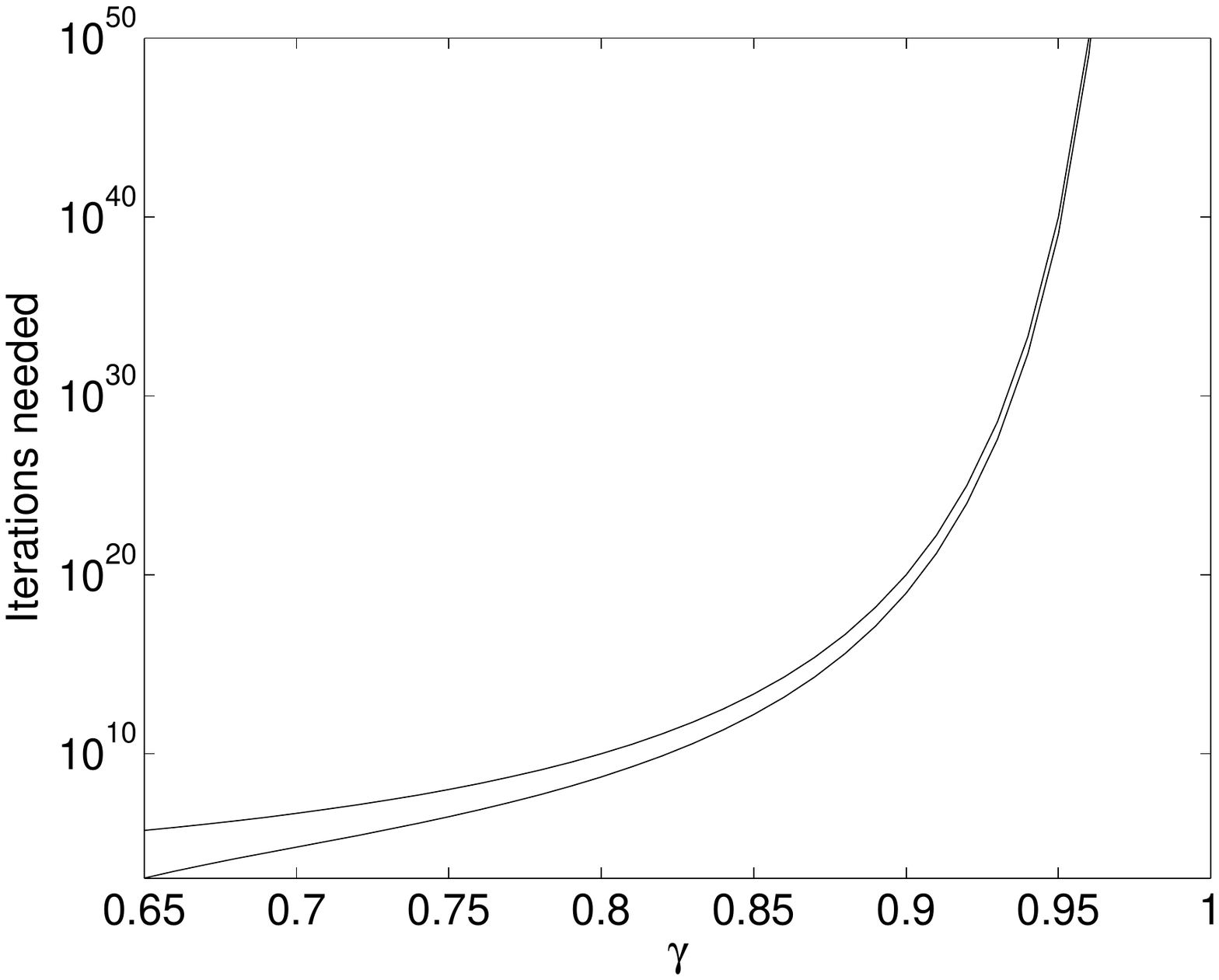}
}
\subfigure[$0.65 \leq \gamma \leq 0.99$.]{
\includegraphics[width=3in]{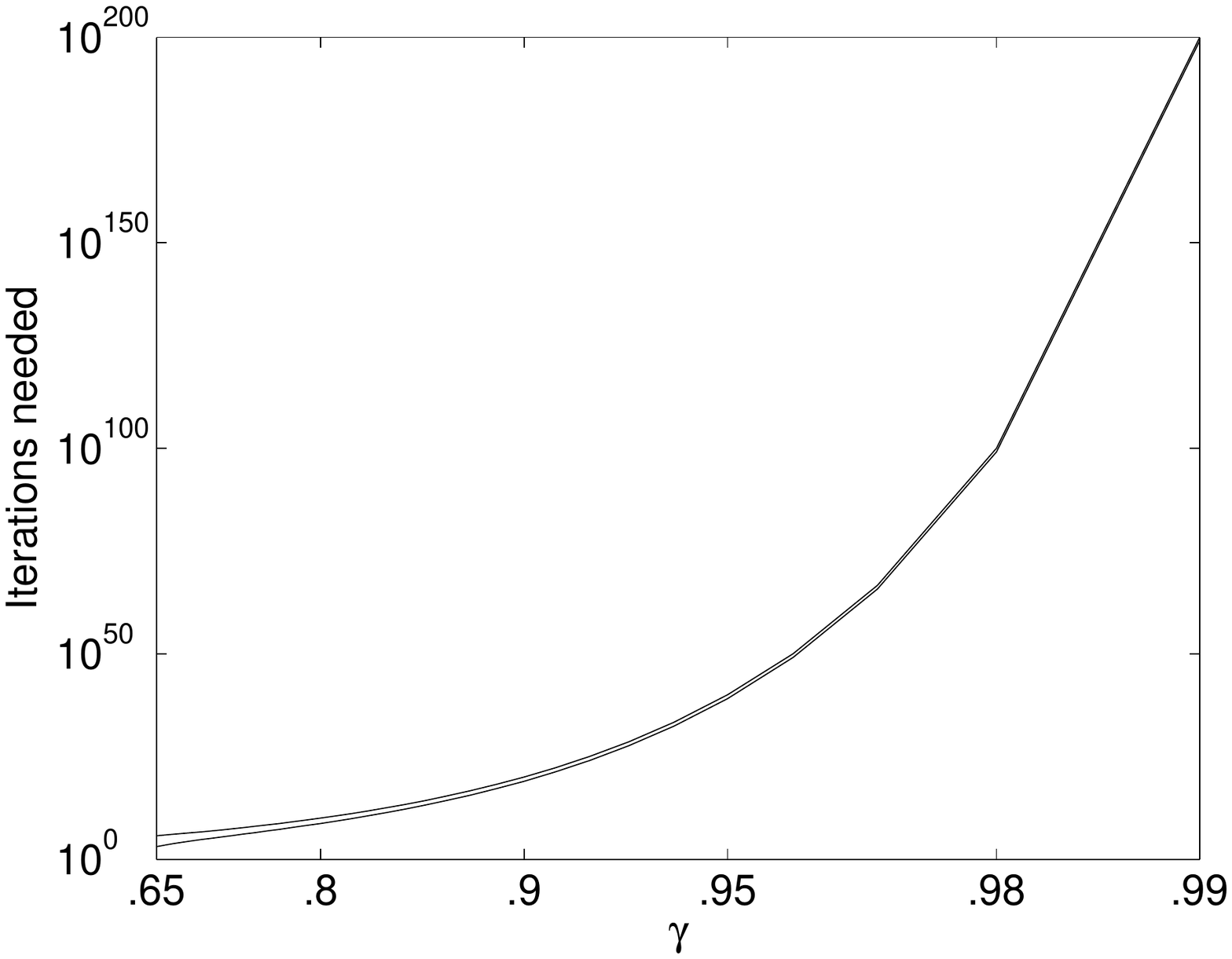}
}
\caption{Upper and lower bounds on number of iterations needed to get within 1\% of optimal, plotted for different ranges of $\gamma$.}\label{fig:IterationsNeeded}
\end{figure}

In Figure \ref{fig:IterationsNeeded}, we show the number of iterations before $\vbar^n$ reaches 1\% of optimal.  The lower bound on the value of $\vbar^n$ gives an upper bound on the number of iterations needed, and the upper bound on $\vbar^n$ gives a lower bound on the iterations needed.  For $\gamma$ near $.7$, we already require $10,000$ iterations, causing difficulty for applications requiring a significant amount of time per iteration.  Then, as $\gamma$ grows larger than $.8$ we require at least $10^8$ iterations, which is impractical for almost any application.  As $\gamma$ grows above $.9$, the number of iterations needed is at least $10^{19}$.

We see that, in this simple problem, approximate value iteration with stepsize $1/n$ converges so slowly as to be impractical for most infinite horizon applications, particularly when the discount factor is close to $1$.  This behaviour is likely to be seen in other more complex infinite horizon problems, and also in undiscounted finite horizon problems.  The remainder of this paper studies a new stepsize rule that is optimal for the single-state, single-action MDP used in the above analysis.

\section{An optimal stepsize for approximate value iteration}\label{sec:opt}

In Section \ref{sec:derivation}, we derive a new stepsize rule that is optimal for the approximate value iteration problem given by (\ref{eq:vhat}) and (\ref{eq:vbar}). We then study its convergence properties in Section \ref{sec:convergence}. However, while we use the special case in (\ref{eq:vhat})-(\ref{eq:vbar}) for theoretical tractability, our ultimate goal is to obtain an algorithm that can be applied in a general dynamic program. This extension is explained in Section \ref{sec:unknownc}, and the general form of our stepsize is given in (\ref{eq:optadp}). Finally, Section \ref{sec:finitehorizon} considers an extension to finite-horizon problems.

\subsection{Derivation}\label{sec:derivation}

The approximate value iteration problem is given by (\ref{eq:vhat}) and (\ref{eq:vbar}). As before, let $c = \Exp \chat^n$ and $\sigma^2 = Var\left(\chat^n\right)$. Observe that $\vbar^n$ can be written recursively as
\begin{equation}\label{eq:vbarrec}
\vbar^n = \left(1-\alpha_{n-1}\right)\vbar^{n-1} + \alpha_{n-1}\chat^n + \alpha_{n-1}\gamma\vbar^{n-1} = \left(1 - \left(1-\gamma\right)\alpha_{n-1}\right)\vbar^{n-1} + \alpha_{n-1}\chat^n\mbox{.}
\end{equation}
The particular structure of this problem allows us to derive recursive formulas for the mean and variance of the approximation $\vbar^n$. We assume that $\vbar^0 = 0$.

\begin{proposition}\label{prop:deltalambda}
Define
\begin{eqnarray*}
\delta^n = \left\{
\begin{array}{l l}
  \alpha_0 & n = 1\\
  \alpha_{n-1} + \left(1-\left(1-\gamma\right)\alpha_{n-1}\right)\delta^{n-1} & n > 1,
\end{array}
\right.
\end{eqnarray*}
and
\begin{eqnarray*}
\lambda^n = \left\{
\begin{array}{l l}
  \alpha^2_0 & n = 1\\
  \alpha^2_{n-1} + \left(1-\left(1-\gamma\right)\alpha_{n-1}\right)^2\lambda^{n-1} & n > 1.
\end{array}
\right.
\end{eqnarray*}
Then, $\Exp\left(\vbar^n\right) = \delta^n c$ and $Var\left(\vbar^n\right) = \lambda^n \sigma^2$.
\end{proposition}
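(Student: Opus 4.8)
The plan is to establish both identities by induction on $n$, exploiting the linear recursion (\ref{eq:vbarrec}). Writing $\vbar^n = \beta_{n-1}\vbar^{n-1} + \alpha_{n-1}\chat^n$ with the contraction factor $\beta_{n-1} = 1 - (1-\gamma)\alpha_{n-1}$, I note that the recursions defining $\delta^n$ and $\lambda^n$ mirror exactly the structure that linearity of expectation and the variance-of-a-sum formula produce when applied to this recursion; the two claims decouple, so I would run one induction for the mean and a parallel one for the variance.

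For the base case $n=1$, the assumption $\vbar^0 = 0$ gives $\vbar^1 = \alpha_0\chat^1$, so $\Exp(\vbar^1) = \alpha_0 c = \delta^1 c$ and $Var(\vbar^1) = \alpha_0^2\sigma^2 = \lambda^1\sigma^2$, matching the $n=1$ branches of the two definitions. For the inductive step on the mean, I would simply apply expectation to the recursion and use $\Exp(\chat^n) = c$: this yields $\Exp(\vbar^n) = \beta_{n-1}\delta^{n-1}c + \alpha_{n-1}c = (\alpha_{n-1} + \beta_{n-1}\delta^{n-1})c$, which is precisely $\delta^n c$ by the recursive definition of $\delta^n$. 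This half requires nothing beyond linearity.

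The substantive step is the variance recursion, and the key point I must justify is that $\chat^n$ is independent of $\vbar^{n-1}$. This follows because unrolling (\ref{eq:vbarrec}) shows that $\vbar^{n-1}$ is a function, with deterministic coefficients, of $\chat^1, \ldots, \chat^{n-1}$ alone, and the $\chat^i$ are i.i.d., so $\chat^n$ is independent of the entire prior history. With this independence the covariance cross-term vanishes, giving $Var(\vbar^n) = \beta_{n-1}^2 Var(\vbar^{n-1}) + \alpha_{n-1}^2 Var(\chat^n) = (\alpha_{n-1}^2 + \beta_{n-1}^2\lambda^{n-1})\sigma^2 = \lambda^n\sigma^2$, again matching the definition of $\lambda^n$. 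The only genuine obstacle is articulating this independence cleanly; once it is in hand, both recursions are immediate and the induction closes.
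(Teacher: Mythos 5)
Your proposal is correct and follows essentially the same route as the paper's proof: induction on $n$ via the recursion (\ref{eq:vbarrec}), with linearity of expectation handling the mean and the independence of $\chat^n$ from $\vbar^{n-1}$ (since $\vbar^{n-1}$ is a function of $\chat^1,\ldots,\chat^{n-1}$ only) killing the cross-term in the variance. No gaps.
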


\begin{proof}
Observe that $\Exp\left(\vbar^1\right) = \alpha_0 c = \delta^1 c$ and $Var\left(\vbar^1\right) = \alpha^2_0 \sigma^2 = \lambda^1 \sigma^2$. Now suppose that $\Exp\left(\vbar^{n-1}\right) = \delta^{n-1} c$ and $Var\left(\vbar^{n-1}\right) = \lambda^{n-1} \sigma^2$. By (\ref{eq:vbarrec}), we have
\begin{equation*}
\Exp\left(\vbar^n\right) = \left(1 - \left(1-\gamma\right)\alpha_{n-1}\right) \delta^{n-1} c + \alpha_{n-1} c = \delta^n c\mbox{.}
\end{equation*}
Furthermore, $\vbar^{n-1}$ depends only on $\chat^{n'}$ for $n' < n$, therefore $\vbar^{n-1}$ and $\chat^n$ are independent. Consequently,
\begin{equation*}
Var\left(\vbar^n\right) = \left(1 - \left(1-\gamma\right)\alpha_{n-1}\right)^2 \lambda^{n-1} \sigma^2 + \alpha^2_{n-1} \sigma^2 = \lambda^n \sigma^2
\end{equation*}
as required.
\end{proof}


The next result shows that these quantities are uniformly bounded in $n$; the proof is given in the Appendix.

\begin{proposition}\label{prop:bounds}
For all $n$, $\delta^n \leq \frac{1}{1-\gamma}$ and $\lambda^n \leq \frac{1}{\gamma\left(1-\gamma\right)}$.
\end{proposition}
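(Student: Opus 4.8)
The plan is to prove both bounds by induction on $n$, exploiting the fact that each proposed bound is (almost) a fixed point of the corresponding recursion. Throughout, write $\beta = 1-\gamma$ and $a = \alpha_{n-1} \in (0,1]$, and note at the outset that $1 - \beta a \geq 0$, since $\beta a \leq \beta < 1$. This guarantees that the recursion coefficients $1-(1-\gamma)\alpha_{n-1}$ and $(1-(1-\gamma)\alpha_{n-1})^2$ are nonnegative, so an upper bound on $\delta^{n-1}$ (resp. $\lambda^{n-1}$) propagates through the recursion to an upper bound on $\delta^n$ (resp. $\lambda^n$).

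For the $\delta$-bound, I would check the base case $\delta^1 = \alpha_0 \leq 1 \leq \frac{1}{1-\gamma}$ and then assume $\delta^{n-1} \leq \frac{1}{1-\gamma}$. Substituting into the recursion from Proposition \ref{prop:deltalambda} and using $1-\beta a \geq 0$ gives $\delta^n \leq a + (1-\beta a)\frac{1}{\beta}$. The key observation is that the right-hand side equals $a + \frac{1}{\beta} - a = \frac{1}{\beta} = \frac{1}{1-\gamma}$ identically in $a$: the terms linear in $a$ cancel. Thus $\frac{1}{1-\gamma}$ is an exact fixed point of the map $x \mapsto a + (1-\beta a)x$ for \emph{every} admissible stepsize, and the induction closes immediately.

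For the $\lambda$-bound, set $L = \frac{1}{\gamma(1-\gamma)}$. I would first verify the base case $\lambda^1 = \alpha_0^2 \leq 1 \leq L$, which holds because $\gamma(1-\gamma) \leq \frac14$. Assuming $\lambda^{n-1} \leq L$, the inductive step reduces to checking $a^2 + (1-\beta a)^2 L \leq L$, equivalently $a^2 \leq L\beta a\,(2 - \beta a)$. Dividing by $a > 0$ and using $L\beta = \frac{1}{\gamma}$, this rearranges to $a(\gamma + \beta) \leq 2$; since $\gamma + \beta = \gamma + (1-\gamma) = 1$, the requirement is simply $a = \alpha_{n-1} \leq 2$, which holds because $\alpha_{n-1} \leq 1$. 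Here $L$ is not an exact fixed point but a super-solution, and the slack in the inequality is exactly what the constraint $\alpha_{n-1} \leq 1$ supplies.

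The computations are elementary, so there is no genuine analytical obstacle. The only nontrivial part is identifying the correct closed-form bounds and recognizing the algebraic cancellation $\gamma + (1-\gamma) = 1$, which is what makes the $\lambda$ induction succeed uniformly over all admissible stepsizes rather than only for a specific rule. Since the bounds themselves are already given in the statement, the proof amounts to verifying that each is preserved by its recursion, and the fixed-point/super-solution structure makes this straightforward.
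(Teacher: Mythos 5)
Your proposal is correct and follows essentially the same route as the paper's proof: induction on $n$ using the recursions from Proposition \ref{prop:deltalambda}, with $\frac{1}{1-\gamma}$ an exact fixed point for the $\delta$-recursion and $\frac{1}{\gamma(1-\gamma)}$ a super-solution for the $\lambda$-recursion. The only difference is cosmetic --- you factor and divide by $\alpha_{n-1}$ where the paper expands and collects terms into $\frac{1}{\gamma}\alpha_{n-1}^2 - \frac{2}{\gamma}\alpha_{n-1} \leq 0$ --- so the two arguments are the same.
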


We define the optimal stepsize for time $n$ to be the value that achieves
\begin{equation}\label{eq:obj}
\min_{\alpha_{n-1} \in \left[0,1\right]} \Exp\left[\left(\vbar^n\left(\alpha_{n-1}\right) - \Exp \vhat^n\right)^2\right]\mbox{,}
\end{equation}
which is the minimum squared deviation of the time-$n$ estimate $\vbar^n$ from the mean of the time-$n$ observation $\vhat^n$. The constraint $\alpha_{n-1} \in\left[0,1\right]$ is standard in ADP, but turns out to be redundant here; as we see below (Corollary \ref{cor:zeroone}), minimizing the unconstrained objective will produce a solution that always satisfies the constraint.

We can simplify the objective function in (\ref{eq:obj}) in the following manner:
\begin{eqnarray*}
\Exp\left[\left(\vbar^n\left(\alpha_{n-1}\right) - \Exp \vhat^n\right)^2\right] &=& \Exp\left[\left(\left(1-\alpha_{n-1}\right)\vbar^{n-1} + \alpha_{n-1}\vhat^n - \Exp \vhat^n\right)^2\right]\\
&=& \Exp\left[\left(\left(1-\alpha_{n-1}\right)\left(\vbar^{n-1} - \Exp \vhat^n\right) + \alpha_{n-1}\left(\vhat^n - \Exp \vhat^n\right)\right)^2\right]\\
&=& \left(1-\alpha_{n-1}\right)^2 \Exp\left[\left(\vbar^{n-1}-\Exp \vhat^n\right)^2\right] + \alpha^2_{n-1}\Exp\left[\left(\vhat^n - \Exp \vhat^n\right)^2\right]\\
&\,& + 2\alpha_{n-1}\left(1-\alpha_{n-1}\right)\Exp\left[\left(\vbar^{n-1}-\Exp \vhat^n\right)\left(\vhat^n - \Exp \vhat^n\right)\right]\mbox{.}
\end{eqnarray*}
The first equality is obtained using the recursive formula for $\vbar^n$ from (\ref{eq:vbarrec}). Observe that
\begin{eqnarray*}
\Exp\left[\left(\vbar^{n-1}-\Exp \vhat^n\right)\left(\vhat^n - \Exp \vhat^n\right)\right] &=& \Exp\left(\vbar^{n-1}\vhat^n\right) - \Exp\vbar^{n-1}\Exp\vhat^n\\
&=& Cov\left(\vbar^{n-1},\vhat^n\right)\mbox{,}
\end{eqnarray*}
whence we obtain
\begin{eqnarray}\label{eq:firstcov}
\Exp\left[\left(\vbar^n\left(\alpha_{n-1}\right) - \Exp \vhat^n\right)^2\right] &=& \left(1-\alpha_{n-1}\right)^2 \Exp\left[\left(\vbar^{n-1}-\Exp \vhat^n\right)^2\right] + \alpha^2_{n-1}\Exp\left[\left(\vhat^n - \Exp \vhat^n\right)^2\right]\nonumber\\
&\,& + 2\alpha_{n-1}\left(1-\alpha_{n-1}\right)Cov\left(\vbar^{n-1},\vhat^n\right)\mbox{.}
\end{eqnarray}
The error-minimizing stepsize is unique, due to the convexity of the prediction error; the proof of this property is given in the Appendix.

\begin{proposition}\label{prop:convex}
The objective function in (\ref{eq:obj}) is convex in $\alpha_{n-1}$.
\end{proposition}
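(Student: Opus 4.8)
The plan is to exploit the fact that, as already displayed in (\ref{eq:firstcov}), the objective is a quadratic polynomial in $\alpha_{n-1}$. A univariate quadratic is convex precisely when its leading coefficient is nonnegative, so the entire argument reduces to checking the sign of the coefficient of $\alpha^2_{n-1}$. First I would collect the $\alpha^2_{n-1}$ terms coming from $\left(1-\alpha_{n-1}\right)^2$, from $\alpha^2_{n-1}$, and from $2\alpha_{n-1}\left(1-\alpha_{n-1}\right)$ in (\ref{eq:firstcov}), obtaining the leading coefficient
\[
\Exp\left[\left(\vbar^{n-1}-\Exp\vhat^n\right)^2\right] + \Exp\left[\left(\vhat^n-\Exp\vhat^n\right)^2\right] - 2\,Cov\left(\vbar^{n-1},\vhat^n\right).
\]

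The key step is to recognize this combination as a single expected square. Using the identity $Cov\left(\vbar^{n-1},\vhat^n\right) = \Exp\left[\left(\vbar^{n-1}-\Exp\vhat^n\right)\left(\vhat^n-\Exp\vhat^n\right)\right]$, which is exactly the one already used to pass to (\ref{eq:firstcov}), the three terms collapse via $a^2 + b^2 - 2ab = (a-b)^2$ into
\[
\Exp\left[\left(\left(\vbar^{n-1}-\Exp\vhat^n\right)-\left(\vhat^n-\Exp\vhat^n\right)\right)^2\right] = \Exp\left[\left(\vbar^{n-1}-\vhat^n\right)^2\right] \geq 0,
\]
which is manifestly nonnegative. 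Hence the quadratic is convex and the proposition follows. An equivalent and perhaps more transparent route, which avoids even this small rearrangement, is to substitute the recursion (\ref{eq:vbarrec}) \emph{before} taking the expectation: writing the quantity inside the expectation in (\ref{eq:obj}) as $\left(\vbar^{n-1}-\Exp\vhat^n\right) + \alpha_{n-1}\left(\left(\vhat^n-\Exp\vhat^n\right)-\left(\vbar^{n-1}-\Exp\vhat^n\right)\right)$, the objective is the expectation of a perfect square that is affine in $\alpha_{n-1}$, so its $\alpha^2_{n-1}$-coefficient is visibly the expectation of a square.

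There is no genuine obstacle here: the only real content is the observation that the second-order coefficient is the expectation of a perfect square, and this conclusion does not rely on the independence of $\chat^n$ and $\vbar^{n-1}$, on the sign of the covariance, or on any property of the stepsize history, making the argument robust. If desired, I would additionally record the explicit value $\Exp\left[\left(\vbar^{n-1}-\vhat^n\right)^2\right] = \left(1-\gamma\right)^2 Var\left(\vbar^{n-1}\right) + \sigma^2 + \left(\Exp\vbar^{n-1}-\Exp\vhat^n\right)^2$, which follows from $\vhat^n = \chat^n + \gamma\vbar^{n-1}$ together with the independence used in Proposition \ref{prop:deltalambda}. This makes the leading coefficient strictly positive (e.g. whenever $\sigma^2 > 0$), so that the quadratic is in fact strictly convex, foreshadowing the uniqueness of the error-minimizing stepsize asserted just before the statement.
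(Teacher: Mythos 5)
Your proof is correct, and it takes a genuinely different route from the paper's. You observe that the leading coefficient of the quadratic in (\ref{eq:firstcov}) collapses, via $Cov\left(\vbar^{n-1},\vhat^n\right) = \Exp\left[\left(\vbar^{n-1}-\Exp\vhat^n\right)\left(\vhat^n-\Exp\vhat^n\right)\right]$ and the identity $a^2+b^2-2ab=(a-b)^2$ inside the expectation, into the single expected square $\Exp\left[\left(\vbar^{n-1}-\vhat^n\right)^2\right]\geq 0$. This is model-free: it uses nothing about the form $\vhat^n = \chat^n+\gamma\vbar^{n-1}$, the independence of $\chat^n$ and $\vbar^{n-1}$, or the sign of the covariance, so it would establish convexity of the prediction error for any exponential-smoothing update, not just this one. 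The paper instead substitutes the problem-specific facts $Cov\left(\vbar^{n-1},\vhat^n\right)=\gamma\,Var\left(\vbar^{n-1}\right)$, the bias-variance decomposition (\ref{eq:biasvar}), and $Var\left(\vhat^n\right)=\sigma^2+\gamma^2 Var\left(\vbar^{n-1}\right)$, reducing the claim to $2\gamma\leq 1+\gamma^2$. What the paper's route buys is the explicit value of the second-order coefficient in terms of $\lambda^{n-1}$, $\delta^{n-1}$, $\sigma^2$, $c$ --- i.e.\ the denominator of (\ref{eq:opt}) --- which is reused immediately afterward; your closing computation $\Exp\left[\left(\vbar^{n-1}-\vhat^n\right)^2\right]=\left(1-\gamma\right)^2 Var\left(\vbar^{n-1}\right)+\sigma^2+\left(\Exp\vbar^{n-1}-\Exp\vhat^n\right)^2$ recovers exactly the same quantity and additionally makes strict convexity (hence uniqueness of the minimizer) explicit when $\sigma^2>0$. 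Both arguments are sound; yours is the shorter and more robust of the two.
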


Due to Proposition \ref{prop:convex}, we can solve (\ref{eq:obj}) by setting the derivative of the prediction error equal to zero and solving for $\alpha_{n-1}$. This yields an equation
\begin{equation*}
\left(\alpha_{n-1}-1\right)\Exp\left[\left(\vbar^{n-1}-\Exp \vhat^n\right)^2\right] + \alpha_{n-1}\Exp\left[\left(\vhat^n - \Exp \vhat^n\right)^2\right] + \left(1-2\alpha_{n-1}\right)Cov\left(\vbar^{n-1},\vhat^n\right) = 0
\end{equation*}
whence we obtain
\begin{equation}\label{eq:firstopt}
\alpha_{n-1} = \frac{\Exp\left[\left(\vbar^{n-1}-\Exp \vhat^n\right)^2\right]-Cov\left(\vbar^{n-1},\vhat^n\right)}{\Exp\left[\left(\vbar^{n-1}-\Exp \vhat^n\right)^2\right] + \Exp\left[\left(\vhat^n - \Exp \vhat^n\right)^2\right]-2Cov\left(\vbar^{n-1},\vhat^n\right)}\mbox{.}
\end{equation}
We now present our main result, which gives an explicit formula for (\ref{eq:firstopt}).

\begin{theorem}
Assuming that $\alpha_0$ is given, the optimal time-$n$ stepsize can be computed using the formula
\begin{equation}\label{eq:opt}
\alpha_{n-1} = \frac{\left(1-\gamma\right)\lambda^{n-1} \sigma^2 + \left(1 - \left(1 - \gamma\right)\delta^{n-1}\right)^2 c^2}{\left(1-\gamma\right)^2\lambda^{n-1} \sigma^2 + \left(1 - \left(1 - \gamma\right)\delta^{n-1}\right)^2 c^2 + \sigma^2} \qquad n = 2,3,...
\end{equation}
where $\delta^{n-1}$ and $\lambda^{n-1}$ are as in Proposition \ref{prop:deltalambda}.
\end{theorem}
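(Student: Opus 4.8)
The plan is to evaluate the three expectations appearing in the general formula (\ref{eq:firstopt}) in closed form, using the moment identities of Proposition \ref{prop:deltalambda} together with the defining relation (\ref{eq:vhat}), and then to substitute and simplify. First I would record the mean of the observation: since $\vhat^n = \chat^n + \gamma\vbar^{n-1}$ and $\Exp\vbar^{n-1} = \delta^{n-1}c$, we have $\Exp\vhat^n = \left(1+\gamma\delta^{n-1}\right)c$. This quantity is the common anchor for all three terms.

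Next I would dispatch the two second-moment terms involving $\vhat^n$, which are immediate once I invoke the independence of $\vbar^{n-1}$ and $\chat^n$ already established in the proof of Proposition \ref{prop:deltalambda} (the former depends only on $\chat^{n'}$ with $n'<n$). From $\vhat^n = \chat^n + \gamma\vbar^{n-1}$ and $Var(\vbar^{n-1}) = \lambda^{n-1}\sigma^2$, independence gives $Var(\vhat^n) = \sigma^2 + \gamma^2\lambda^{n-1}\sigma^2$ and $Cov(\vbar^{n-1},\vhat^n) = \gamma\,Var(\vbar^{n-1}) = \gamma\lambda^{n-1}\sigma^2$.

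The term that takes the most care is $\Exp[(\vbar^{n-1}-\Exp\vhat^n)^2]$, which I would split by the usual bias--variance decomposition into $Var(\vbar^{n-1}) + (\Exp\vbar^{n-1}-\Exp\vhat^n)^2$. The variance piece is $\lambda^{n-1}\sigma^2$, while the bias piece is $\Exp\vbar^{n-1}-\Exp\vhat^n = \delta^{n-1}c - (1+\gamma\delta^{n-1})c = -(1-(1-\gamma)\delta^{n-1})c$, so the squared bias is $(1-(1-\gamma)\delta^{n-1})^2c^2$ and the whole term equals $\lambda^{n-1}\sigma^2 + (1-(1-\gamma)\delta^{n-1})^2c^2$. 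Getting the sign right and extracting the factor $1-(1-\gamma)\delta^{n-1}$ exactly is the crux of the argument; everything downstream is bookkeeping.

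Finally I would substitute these three expressions into (\ref{eq:firstopt}). In the numerator the covariance cancels part of the variance contribution, leaving the coefficient $(1-\gamma)$ on $\lambda^{n-1}\sigma^2$ and the squared-bias term intact. In the denominator the three $\lambda^{n-1}\sigma^2$ contributions combine with coefficient $1+\gamma^2-2\gamma=(1-\gamma)^2$, the extra $\sigma^2$ from $Var(\vhat^n)$ survives, and the squared-bias term is common to numerator and denominator. Collecting terms reproduces (\ref{eq:opt}) exactly. I expect no genuine obstacle beyond the careful algebra in the bias term and the recognition of the perfect square $1+\gamma^2-2\gamma=(1-\gamma)^2$ in the denominator.
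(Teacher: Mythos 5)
Your proposal is correct and follows essentially the same route as the paper's own proof: compute $Var(\vhat^n)$ and $Cov(\vbar^{n-1},\vhat^n)=\gamma\,Var(\vbar^{n-1})$ via the independence of $\chat^n$ and $\vbar^{n-1}$, handle $\Exp\left[\left(\vbar^{n-1}-\Exp\vhat^n\right)^2\right]$ by the bias--variance decomposition with bias $\left(1-\left(1-\gamma\right)\delta^{n-1}\right)c$, and substitute into (\ref{eq:firstopt}). The algebraic simplifications you describe (the $(1-\gamma)$ coefficient in the numerator and the perfect square $(1-\gamma)^2$ in the denominator) are exactly what the substitution yields.
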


\begin{proof}
We compute each expectation in (\ref{eq:firstopt}). First, observe that
\begin{equation*}
\Exp\left[\left(\vhat^n - \Exp \vhat^n\right)^2\right] = Var\left(\vhat^n\right) = Var\left(\chat^n + \gamma \vbar^{n-1}\right) = \left(1 + \gamma^2 \lambda^{n-1}\right)\sigma^2
\end{equation*}
using the independence of $\chat^n$ and $\vbar^{n-1}$ together with Proposition \ref{prop:deltalambda}. We now use a bias-variance decomposition \citep[see e.g.][]{HaTiFr01} to write
\begin{eqnarray}
\Exp\left[\left(\vbar^{n-1}-\Exp \vhat^n\right)^2\right] &=& \Exp\left[\left(\vbar^{n-1} -\Exp \vbar^{n-1} + \Exp\vbar^{n-1}  -\Exp \vhat^n\right)^2\right]\nonumber\\
&=& \Exp\left[\left(\vbar^{n-1} -\Exp \vbar^{n-1}\right)^2\right] + \left(\Exp\vbar^{n-1}  -\Exp \vhat^n\right)^2\nonumber\\
&=& Var\left(\vbar^{n-1}\right) + \left(\Exp\vbar^{n-1}  -\Exp \vhat^n\right)^2\label{eq:biasvar}
\end{eqnarray}
where the cross term vanishes because the quantity $\Exp\vbar^{n-1} - \Exp \vhat^n$ is deterministic, and thus
\begin{equation*}
\Exp\left[\left(\vbar^{n-1} -\Exp \vbar^{n-1}\right)\left(\Exp\vbar^{n-1}  -\Exp \vhat^n\right)\right] = \left(\Exp\vbar^{n-1} - \Exp \vhat^n\right)\Exp\left(\vbar^{n-1} - \Exp \vbar^{n-1}\right) = 0\mbox{.}
\end{equation*}
By Proposition \ref{prop:deltalambda} $Var\left(\vbar^{n-1}\right) = \lambda^{n-1} \sigma^2$, and
\begin{equation*}
\Exp \vhat^n - \Exp \vbar^{n-1} = c + \gamma \delta^{n-1} c - \delta^{n-1} c = \left(1 - \left(1-\gamma\right)\delta^{n-1}\right) c
\end{equation*}
represents the bias of $\vbar^{n-1}$ in predicting $\vhat^n$. Thus,
\begin{equation*}
\Exp\left[\left(\vbar^{n-1}-\Exp \vhat^n\right)^2\right] = \lambda^{n-1} \sigma^2 + \left(1 - \left(1 - \gamma\right)\delta^{n-1}\right)^2 c^2\mbox{.}
\end{equation*}
Finally, we compute
\begin{eqnarray}
Cov\left(\vbar^{n-1},\vhat^n\right) &=& \Exp\left(\vbar^{n-1}\vhat^n\right) - \Exp \vbar^{n-1}\Exp\vhat^n\nonumber\\
&=& \Exp\left( \vbar^{n-1}\left(\chat^n + \gamma\vbar^{n-1}\right)\right) - \Exp\vbar^{n-1}\Exp\left(\chat^n + \gamma\vbar^{n-1}\right)\nonumber\\
&=& c \Exp\vbar^{n-1} + \gamma\Exp\left(\vbar^{n-1}\right)^2 - c \Exp\vbar^{n-1} - \gamma\left(\Exp\vbar^{n-1}\right)^2\nonumber\\
&=& \gamma Var\left(\vbar^{n-1}\right)\mbox{,}\label{eq:cov}
\end{eqnarray}
where we use the independence of $\vbar^{n-1}$ and $\chat^n$ to obtain the third line. Substituting all of these expressions into (\ref{eq:firstopt}) completes the proof.
\end{proof}

\begin{corollary}\label{cor:zeroone}
For all $n$, $\alpha_{n-1} \in \left[0,1\right]$.
\end{corollary}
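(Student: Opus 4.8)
The plan is to work directly from the closed-form expression (\ref{eq:opt}) and verify the two inequalities $\alpha_{n-1}\ge 0$ and $\alpha_{n-1}\le 1$ separately, treating the formula as a ratio $N/M$ of a numerator $N$ over a denominator $M$. Since the theorem supplies $\alpha_0$ and (\ref{eq:opt}) governs only $n\ge 2$, I would dispose of the case $n=1$ by the standing assumption that the given $\alpha_0$ lies in $[0,1]$, and concentrate on $n\ge 2$. The first observation is that every building block of (\ref{eq:opt}) is nonnegative: $0<\gamma<1$ forces $1-\gamma>0$; $\lambda^{n-1}\ge 0$ and $\sigma^2\ge 0$ by definition; and $(1-(1-\gamma)\delta^{n-1})^2 c^2\ge 0$ as a perfect square times $c^2$. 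Hence both the numerator $N=(1-\gamma)\lambda^{n-1}\sigma^2+(1-(1-\gamma)\delta^{n-1})^2 c^2$ and the denominator $M=(1-\gamma)^2\lambda^{n-1}\sigma^2+(1-(1-\gamma)\delta^{n-1})^2 c^2+\sigma^2$ are sums of nonnegative terms, which immediately gives $\alpha_{n-1}=N/M\ge 0$ once $M>0$ is established.

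The heart of the argument is the upper bound, and I expect it to reduce cleanly to Proposition \ref{prop:bounds}. Forming the difference $M-N$, the quadratic-in-$c$ bias terms cancel and the $\sigma^2$ terms collapse to
\[
M-N=(1-\gamma)^2\lambda^{n-1}\sigma^2+\sigma^2-(1-\gamma)\lambda^{n-1}\sigma^2=\sigma^2\left(1-\gamma(1-\gamma)\lambda^{n-1}\right).
\]
Thus $\alpha_{n-1}\le 1$ is equivalent to $\gamma(1-\gamma)\lambda^{n-1}\le 1$, i.e.\ to $\lambda^{n-1}\le\tfrac{1}{\gamma(1-\gamma)}$, which is exactly the variance bound proved in Proposition \ref{prop:bounds}. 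This is the one place where nontrivial information about the iterates enters; the bias bound $\delta^{n-1}\le\tfrac{1}{1-\gamma}$ plays no role in this direction, so the content of the corollary is carried essentially entirely by the uniform bound on $\lambda^{n-1}$.

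The main obstacle, and the only genuinely delicate point, is well-definedness: I must confirm that the denominator $M$ is strictly positive so that the ratio makes sense and the sign conclusions above are valid. When $\sigma^2>0$ this is automatic, since $M\ge\sigma^2>0$. The degenerate case $\sigma^2=0$ forces $\lambda^{n-1}\sigma^2=0$, leaving $M=(1-(1-\gamma)\delta^{n-1})^2 c^2$; here I would invoke the fact that $\delta^{n-1}$ is finite and satisfies $\delta^{n-1}<\tfrac{1}{1-\gamma}$ strictly for finite $n$ (it only approaches the bound of Proposition \ref{prop:bounds} in the limit), so that $1-(1-\gamma)\delta^{n-1}>0$ and hence $M>0$ whenever $c\ne 0$; the fully degenerate case $\sigma^2=c=0$ is vacuous, since the observations are then identically zero and the stepsize is immaterial. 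Assembling these pieces yields $0\le\alpha_{n-1}\le 1$ for every $n$, as claimed.
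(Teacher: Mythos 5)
Your proof is correct and follows essentially the same route as the paper's: nonnegativity of every term in (\ref{eq:opt}) gives $\alpha_{n-1}\ge 0$, and the upper bound reduces, via the cancellation $M-N=\sigma^2\left(1-\gamma(1-\gamma)\lambda^{n-1}\right)$, to exactly the inequality $\gamma(1-\gamma)\lambda^{n-1}\sigma^2\le\sigma^2$ that the paper draws from Proposition \ref{prop:bounds}. Your additional verification that the denominator is strictly positive in the degenerate case $\sigma^2=0$ is a careful touch the paper omits, but it does not change the substance of the argument.
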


\begin{proof}
The positivity of $\alpha_{n-1}$ is obvious from (\ref{eq:opt}), where both the numerator and denominator are sums of positive terms (it can easily be seen that $\lambda^{n-1} \geq 0$ for all $n$). To show that $\alpha_{n-1} \leq 1$, first observe that
\begin{equation*}
\gamma\left(1-\gamma\right)\lambda^{n-1} \sigma^2 \leq \sigma^2
\end{equation*}
by the result of Proposition \ref{prop:bounds}. From this it can easily be shown that
\begin{equation*}
\left(1-\gamma\right)\lambda^{n-1}\sigma^2 \leq \left(1-\gamma\right)^2 \lambda^{n-1}\sigma^2 + \sigma^2\mbox{,}
\end{equation*}
completing the proof.
\end{proof}

We see that both the numerator and the denominator of the fraction in (\ref{eq:opt}) include covariance terms. To our knowledge, this is the first stepsize in the literature to explicitly account for the dependence between observations. Furthermore, the formula includes a closed-form expression for the bias $\Exp \vbar^{n-1} - \Exp \vhat^n$, which is balanced against the variance of $\vbar^{n-1}$.

We close this section by showing that our formula behaves correctly in special cases. If the rewards we collect are deterministic, then our estimate $\vbar^n$ is simply adding up the discounted rewards, and should converge to $v^*$ under the optimal stepsize rule. If the process $\vhat^n$ is stationary, i.e. $\gamma = 0$, then $\vbar^n$ is simply estimating $c$, and we should be using the known optimal stepsize rule of $\alpha_{n-1} = \frac{1}{n}$.

\begin{corollary}
If the underlying reward process has zero noise, then $\sigma^2 = 0$ and $\alpha_{n-1} = 1$ for all $n$. It follows that $\vbar^n = \vhat^n$ for all $n$, and
\begin{equation*}
\lim_{n\rightarrow \infty} \vbar^n = \sum^{\infty}_{i=0} \gamma^i c = \frac{c}{1-\gamma}\mbox{.}
\end{equation*}
\end{corollary}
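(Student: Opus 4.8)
The plan is to read $\alpha_{n-1}=1$ straight off the explicit formula (\ref{eq:opt}), then push this through the smoothing recursion to collapse $\vbar^n$ onto $\vhat^n$ and sum the resulting geometric series. First I would set $\sigma^2=0$ in (\ref{eq:opt}). Every term carrying a factor of $\sigma^2$ vanishes, so both the numerator and the denominator reduce to the single common term $\left(1 - \left(1-\gamma\right)\delta^{n-1}\right)^2 c^2$; the ratio is therefore $1$ for each $n=2,3,\dots$, provided this common term is nonzero.

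The one point that needs care is this non-degeneracy: I must rule out the indeterminate form $0/0$ by showing $\left(1 - \left(1-\gamma\right)\delta^{n-1}\right)^2 c^2 \neq 0$. The case $c=0$ is trivial, since then $v^* = c/(1-\gamma) = 0$ and $\vbar^n = 0$ for all $n$; so assume $c\neq 0$. It then suffices to verify $1 - \left(1-\gamma\right)\delta^{n-1}\neq 0$, i.e. $\delta^{n-1} < \frac{1}{1-\gamma}$ strictly, since Proposition \ref{prop:bounds} only supplies the non-strict bound. I would obtain this by an induction that simultaneously establishes $\alpha_{n-1}=1$. The base case is $\alpha_0=1$, which I treat separately because (\ref{eq:opt}) is stated only for $n\geq 2$: for $n=1$ the objective (\ref{eq:obj}) reduces, using $\vbar^0=0$ and $\vhat^1=\chat^1=c$, to $c^2(\alpha_0-1)^2$, minimized at $\alpha_0=1$. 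Granting $\alpha_0=\cdots=\alpha_{n-2}=1$, the recursion of Proposition \ref{prop:deltalambda} gives $\delta^{n-1}=\sum_{i=0}^{n-2}\gamma^i < \frac{1}{1-\gamma}$, so the common term is strictly positive and (\ref{eq:opt}) returns $\alpha_{n-1}=1$, closing the induction.

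With $\alpha_{n-1}=1$ in hand for all $n$, the smoothing update (\ref{eq:vbar}) gives $\vbar^n = \left(1-\alpha_{n-1}\right)\vbar^{n-1} + \alpha_{n-1}\vhat^n = \vhat^n$ immediately. Substituting the bootstrapped observation (\ref{eq:vhat}) and using $\chat^n=c$ in the noiseless case turns this into the deterministic linear recursion $\vbar^n = c + \gamma\vbar^{n-1}$. Solving it with $\vbar^0=0$ yields the partial geometric sum $\vbar^n = c\sum_{i=0}^{n-1}\gamma^i$, and since $\gamma<1$ this converges to $\sum_{i=0}^{\infty}\gamma^i c = \frac{c}{1-\gamma}$, as claimed.

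The only genuine wrinkle is the bookkeeping around $\alpha_0$ and the strict bound on $\delta^{n-1}$; everything else is a direct substitution followed by summing a geometric series. As a sanity check one could bypass the formula altogether: once $\sigma^2=0$ every $\vbar^{n-1}$ is deterministic, so (\ref{eq:obj}) becomes $(1-\alpha_{n-1})^2(\vbar^{n-1}-\Exp\vhat^n)^2$, which is driven to zero at $\alpha_{n-1}=1$ whenever $\vbar^{n-1}\neq\Exp\vhat^n$.
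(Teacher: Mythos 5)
Your proposal is correct, and it follows the route the paper intends: the paper states this corollary without any proof, treating it as an immediate consequence of substituting $\sigma^2=0$ into (\ref{eq:opt}) and then iterating $\vbar^n=\vhat^n=c+\gamma\vbar^{n-1}$ to get the partial geometric sums. The extra care you take --- pinning down $\alpha_0=1$ from the $n=1$ objective and verifying $\delta^{n-1}=\sum_{i=0}^{n-2}\gamma^i<\frac{1}{1-\gamma}$ so that the common factor $\left(1-\left(1-\gamma\right)\delta^{n-1}\right)^2c^2$ is strictly positive and the formula does not degenerate to $0/0$ --- fills in details the paper leaves implicit, and is a welcome tightening rather than a deviation.
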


\begin{corollary}\label{corol:stat}
If the problem is stationary, that is, $\gamma = 0$, then the optimal stepsize is given by $\alpha_{n-1} = \frac{1}{n}$ for all $n$ as long as $\alpha_0 = 1$.
\end{corollary}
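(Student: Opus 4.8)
The plan is to verify the claim by induction on $n$, carrying along the auxiliary quantities $\delta^n$ and $\lambda^n$ of Proposition \ref{prop:deltalambda}. Throughout I would assume $\sigma^2 > 0$ (the genuinely stochastic stationary case; the deterministic subcase $\sigma^2 = 0$ is already covered by the preceding corollary). First I would specialize the stepsize formula (\ref{eq:opt}) to $\gamma = 0$, which removes the covariance coupling and collapses the bias factor $1 - (1-\gamma)\delta^{n-1}$ to $1 - \delta^{n-1}$, giving
\begin{equation*}
\alpha_{n-1} = \frac{\lambda^{n-1}\sigma^2 + \left(1-\delta^{n-1}\right)^2 c^2}{\lambda^{n-1}\sigma^2 + \left(1-\delta^{n-1}\right)^2 c^2 + \sigma^2}.
\end{equation*}
At the same time the recursions of Proposition \ref{prop:deltalambda} simplify, since $1 - (1-\gamma)\alpha_{n-1} = 1 - \alpha_{n-1}$, to $\delta^n = \alpha_{n-1} + (1-\alpha_{n-1})\delta^{n-1}$ and $\lambda^n = \alpha_{n-1}^2 + (1-\alpha_{n-1})^2\lambda^{n-1}$.

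The inductive claim I would establish is the joint statement that $\alpha_{n-1} = 1/n$, $\delta^n = 1$, and $\lambda^n = 1/n$ for every $n \geq 1$. The base case is $n = 1$: the hypothesis $\alpha_0 = 1$ gives $\delta^1 = \alpha_0 = 1$ and $\lambda^1 = \alpha_0^2 = 1$, and these already agree with $1/n$ at $n=1$. For the inductive step, assuming $\delta^{n-1} = 1$ and $\lambda^{n-1} = 1/(n-1)$, the bias term $(1-\delta^{n-1})^2 c^2$ vanishes, so the specialized formula reduces (after cancelling $\sigma^2$, which is where $\sigma^2 > 0$ is used) to
\begin{equation*}
\alpha_{n-1} = \frac{\frac{1}{n-1}}{\frac{1}{n-1} + 1} = \frac{1}{n}.
\end{equation*}
Substituting $\alpha_{n-1} = 1/n$ back into the two simplified recursions then yields $\delta^n = \frac{1}{n} + \left(1-\frac{1}{n}\right)\cdot 1 = 1$ and $\lambda^n = \frac{1}{n^2} + \left(\frac{n-1}{n}\right)^2\frac{1}{n-1} = \frac{1}{n}$, which closes the induction.

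Conceptually, the one point requiring care — and the engine of the whole argument — is that the $1/n$ rule keeps $\vbar^n$ exactly unbiased for $c$, i.e. $\delta^n \equiv 1$, so the bias contribution in both the numerator and denominator of (\ref{eq:opt}) stays identically zero at every step; once that is in place the variance factors collapse to the familiar running-average recursion $\lambda^n = 1/n$. I would also flag the consistency of the indexing: formula (\ref{eq:opt}) is stated only for $n \geq 2$, so the given value $\alpha_0 = 1$ is precisely what seeds the recursion. I do not anticipate a genuine obstacle; the result is essentially a sanity check confirming that the general formula recovers the classical optimal stepsize $1/n$ in the stationary, observation-independent regime.
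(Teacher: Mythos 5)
Your proof is correct and follows essentially the same route as the paper's: both arguments establish $\delta^n \equiv 1$ (so the bias term vanishes) and run an induction showing $\lambda^n = 1/n$ and hence $\alpha_{n-1} = \lambda^{n-1}/(1+\lambda^{n-1}) = 1/n$. Your version merely packages the two inductions into one joint induction and makes explicit the cancellation of $\sigma^2$ that the paper performs silently.
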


\begin{proof}
If $\alpha_1 = 1$ and $\gamma = 0$, then $\vhat^n = \chat^n$. It can easily be shown by induction that $\Exp\vbar^n = c$ for all $n$, which means that $\delta^n = 1$ for all $n$. Then, (\ref{eq:opt}) reduces to
\begin{equation*}
\alpha_{n-1} = \frac{\lambda^{n-1}\sigma^2}{\left(1 + \lambda^{n-1}\right)\sigma^2 } = \frac{\lambda^{n-1}}{1 + \lambda^{n-1}}\mbox{.}
\end{equation*}
We claim that $\lambda^{n-1} = \frac{1}{n-1}$. It is clearly true that $\lambda^1 = 1$, from which it follows that $\alpha_1 = \frac{1}{2}$. Now suppose that $\alpha_{n-2} = \frac{1}{n-1}$ and $\lambda^{n-2} = \frac{1}{n-2}$. Then,
\begin{equation*}
\lambda^{n-1} = \alpha^2_{n-2} + \left(1-\alpha_{n-2}\right)^2\lambda^{n-2} = \frac{1}{\left(n-1\right)^2} + \frac{n-2}{\left(n-1\right)^2} = \frac{n-1}{\left(n-1\right)^2} = \frac{1}{n-1}
\end{equation*}
and $\alpha_{n-1} = \frac{1}{n}$, as required.
\end{proof}

\subsection{Convergence analysis}\label{sec:convergence}

It is well-known \citep{KuYi97,Ts94} that, with some regularity assumptions on the underlying stochastic processes, a stochastic approximation algorithm is provably convergent as long as $\alpha_{n-1} \geq 0$ for all $n$ and
\begin{equation*}
\sum^{\infty}_{n=1}\alpha_{n-1} = \infty\mbox{,} \qquad \sum^{\infty}_{n=1}\alpha^2_{n-1} < \infty\mbox{.}
\end{equation*}
We show the first condition by establishing a lower bound on $\alpha_{n-1}$. The proof is given in the Appendix.

\begin{proposition}\label{prop:lowbound}
For all $n \geq 1$, $\alpha_{n-1} \geq \frac{1-\gamma}{n}$.
\end{proposition}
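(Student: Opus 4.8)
The plan is to reduce the fractional inequality obtained from (\ref{eq:opt}) to a clean lower bound on the variance coefficient $\lambda^{n-1}$, and then to establish that bound by a self-contained induction. Throughout I take $\alpha_0 = 1$, consistent with the rest of the convergence analysis; this settles the base case $n=1$ at once, since $\alpha_0 = 1 \ge 1-\gamma$. I would also dispose of the degenerate case $\sigma^2 = 0$ first, where the zero-noise corollary gives $\alpha_{n-1} = 1 \ge \frac{1-\gamma}{n}$, so assume henceforth $\sigma^2 > 0$ and fix $n \ge 2$. Writing $L = \lambda^{n-1}\sigma^2 \ge 0$ and $\beta = \left(1-(1-\gamma)\delta^{n-1}\right)^2 c^2 \ge 0$, formula (\ref{eq:opt}) reads $\alpha_{n-1} = \frac{(1-\gamma)L + \beta}{(1-\gamma)^2 L + \beta + \sigma^2}$. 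Since the denominator is positive, cross-multiplying and collecting terms shows that the claim $\alpha_{n-1} \ge \frac{1-\gamma}{n}$ is equivalent to
\[
(1-\gamma)L\left[n - (1-\gamma)^2\right] + \beta\left[n - (1-\gamma)\right] - (1-\gamma)\sigma^2 \ge 0.
\]
Both bracketed coefficients are at least $n-1 \ge 1$ because $(1-\gamma)$ and $(1-\gamma)^2$ are at most $1$, so the left side is bounded below by $(1-\gamma)(n-1)L - (1-\gamma)\sigma^2 = (1-\gamma)\sigma^2\left[(n-1)\lambda^{n-1} - 1\right]$ after discarding the nonnegative $\beta$ term. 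Hence it suffices to prove the auxiliary bound $\lambda^{n-1} \ge \frac{1}{n-1}$, equivalently $\lambda^m \ge \frac1m$ for all $m \ge 1$.

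I would prove this auxiliary bound by induction on $m$, and this is the step I expect to be the main obstacle: it must hold for \emph{every} admissible sequence of stepsizes, since $\lambda^m$ depends on the entire history $\alpha_0,\dots,\alpha_{m-1}$, and the actual $\alpha_{m-1}$ is the optimal value given by (\ref{eq:opt}) rather than anything that makes $\lambda^m$ conveniently large. The base case is $\lambda^1 = \alpha_0^2 = 1$. For the step, the recursion of Proposition \ref{prop:deltalambda} together with the inductive hypothesis $\lambda^{m-1} \ge \frac{1}{m-1}$ gives
\[
\lambda^m = \alpha_{m-1}^2 + \left(1-(1-\gamma)\alpha_{m-1}\right)^2\lambda^{m-1} \ge \alpha_{m-1}^2 + \frac{\left(1-(1-\gamma)\alpha_{m-1}\right)^2}{m-1}.
\]
The key observation that sidesteps the obstacle is that the right-hand side is a convex quadratic in $\alpha_{m-1}$, so I would simply minimize it over all real values: the minimizer is $\frac{1-\gamma}{(m-1)+(1-\gamma)^2}$ and the minimum value works out to $\frac{1}{(m-1)+(1-\gamma)^2}$.

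Since $(1-\gamma)^2 \le 1$, this minimum is at least $\frac1m$, and therefore $\lambda^m \ge \frac1m$ holds for \emph{any} choice of $\alpha_{m-1}$, in particular the optimal one; this closes the induction without ever needing the explicit form of the optimal stepsize. Substituting $\lambda^{n-1} \ge \frac{1}{n-1}$ back into the reduced inequality of the first paragraph makes its left side nonnegative, which establishes $\alpha_{n-1} \ge \frac{1-\gamma}{n}$ and completes the proof.
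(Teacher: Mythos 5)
Your proof is correct. It rests on the same key lemma as the paper's proof --- the variance lower bound $\lambda^{n-1} \geq \frac{1}{n-1}$ --- and your cross-multiplied reduction of $\alpha_{n-1}\geq\frac{1-\gamma}{n}$ to that lemma is the same algebra the paper performs when it writes $\alpha_{n-1}=\frac{1-\gamma}{n}+A^{n-1}$ and shows the numerator of $A^{n-1}$ is nonnegative. Where you genuinely depart is in how the induction on $\lambda$ is closed. The paper runs an intertwined induction, $\lambda^{n-1}\geq\frac{1}{n-1}\Rightarrow\alpha_{n-1}\geq\frac{1-\gamma}{n}\Rightarrow\lambda^n\geq\frac{1}{n}$, substituting $\alpha_{n-1}=\frac{1-\gamma}{n}+M^{n-1}$ with $M^{n-1}\geq0$ into the recursion and expanding; you instead minimize the convex quadratic $\alpha\mapsto\alpha^2+\left(1-(1-\gamma)\alpha\right)^2\lambda^{m-1}$ over all real $\alpha$, obtaining $\lambda^m\geq\frac{1}{(m-1)+(1-\gamma)^2}\geq\frac{1}{m}$ for \emph{any} stepsize sequence with $\alpha_0=1$, not just the optimal one. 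This decouples the two bounds, is slightly more general, and makes explicit something only implicit in the paper's expansion (there the cross terms in $M^{n-1}$ cancel, so the paper is in effect performing the same minimization without needing the sign of $M^{n-1}$). The price is nothing; your handling of the edge cases ($n=1$ via $\alpha_0=1$, and $\sigma^2=0$ via the zero-noise corollary, which the paper's proof does not separately address) is also sound, the only residual degeneracy being $c=\sigma=0$, where the formula itself is $0/0$ and neither proof applies.
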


From Proposition \ref{prop:lowbound}, it follows that
\begin{equation*}
\sum^{\infty}_{n=1}\alpha_{n-1} \;\geq \;\left(1-\gamma\right)\,\sum^{\infty}_{n=1}\frac{1}{n} \;=\; \infty\mbox{,}
\end{equation*}
satisfying one of the conditions for convergence. The second condition $\sum^{\infty}_{n=1}\alpha^2_{n-1}<\infty$ can sometimes be relaxed to the requirement that $\alpha_{n-1}\rightarrow 0$. For instance, \cite{KuYi97} discusses the sufficiency of this requirement in stochastic approximation problems with bounded observations. See also \cite{BrCiZe11} for recent proofs of convergence with weaker conditions on the stepsizes. We do not show almost sure convergence in this paper, but we do show that $\alpha_{n-1}\rightarrow 0$, a condition that is common to the above convergence proofs. While this does not automatically imply a.s. convergence, it does produce convergence in $L^2$ for the single-state, single-action model.

We begin by showing that the bias term in the stepsize formula converges to zero; the proof is given in the Appendix. We then prove that $\alpha_{n-1} \rightarrow 0$.

\begin{proposition}\label{prop:biasgoestozero}
$\lim_{n\rightarrow\infty} \delta^n = \frac{1}{1-\gamma}$.
\end{proposition}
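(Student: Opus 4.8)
The plan is to track the gap between $\delta^n$ and its conjectured limit. I define $e^n = \frac{1}{1-\gamma} - \delta^n$, which is nonnegative for every $n$ by the uniform bound $\delta^n \le \frac{1}{1-\gamma}$ of Proposition \ref{prop:bounds}, and I aim to show $e^n \to 0$. The central observation is that $\frac{1}{1-\gamma}$ is the fixed point of the affine map defining the recursion in Proposition \ref{prop:deltalambda}: one checks the identity $\frac{1}{1-\gamma}\bigl(1 - (1-\gamma)\alpha_{n-1}\bigr) + \alpha_{n-1} = \frac{1}{1-\gamma}$. Subtracting the recursion $\delta^n = \alpha_{n-1} + \bigl(1-(1-\gamma)\alpha_{n-1}\bigr)\delta^{n-1}$ from this identity makes the free $\alpha_{n-1}$ terms cancel and collapses everything into the clean linear recursion
$$e^n = \bigl(1 - (1-\gamma)\alpha_{n-1}\bigr)\,e^{n-1}.$$

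Iterating this recursion yields the telescoping product $e^n = e^1 \prod_{k=2}^{n}\bigl(1 - (1-\gamma)\alpha_{k-1}\bigr)$. Since $\alpha_{k-1}\in[0,1]$ by Corollary \ref{cor:zeroone} and $\gamma\in(0,1)$, every factor lies in the interval $[\gamma,1]$, so the partial products form a nonincreasing sequence of nonnegative numbers and the limit is well defined. It remains to show the product vanishes. Taking logarithms and using $\log(1-x)\le -x$ for $x\in[0,1)$ with $x=(1-\gamma)\alpha_{k-1}$ gives
$$\log\!\bigl(e^n/e^1\bigr) \;\le\; -(1-\gamma)\sum_{k=2}^{n}\alpha_{k-1}.$$
By Proposition \ref{prop:lowbound} we have $\alpha_{k-1}\ge \frac{1-\gamma}{k}$, so the series $\sum_k \alpha_{k-1}$ diverges (exactly as already noted in the text following that proposition), the right-hand side tends to $-\infty$, and therefore $e^n\to 0$, i.e. $\delta^n\to\frac{1}{1-\gamma}$.

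The argument is essentially free of delicate estimation once the right bookkeeping is in place, so the only real obstacle is conceptual rather than computational: recognizing that $\frac{1}{1-\gamma}$ is the fixed point of the per-step update, which is what turns the bias recursion into an exactly multiplicative contraction for $e^n$. After that identity is established, the two ingredients I need are already available in the excerpt — the boundedness of $\delta^n$ from Proposition \ref{prop:bounds} (to guarantee $e^n\ge 0$ and that the factors stay in $[\gamma,1]$) and the divergence of $\sum \alpha_{n-1}$ from Proposition \ref{prop:lowbound} (to force the product to zero) — so no new technical lemma is required.
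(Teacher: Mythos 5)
Your proof is correct, but it takes a genuinely different route from the paper's. The paper first shows that $(\delta^n)$ is increasing (using $(1-\gamma)\delta^{n-1}\le 1$ from Proposition \ref{prop:bounds}), concludes it converges to some $\delta^*\le\frac{1}{1-\gamma}$, and then rules out $\delta^*<\frac{1}{1-\gamma}$ by contradiction: the telescoped recursion $\delta^n-\delta^{n-1}=\alpha_{n-1}\bigl(1-(1-\gamma)\delta^{n-1}\bigr)$ would force $\alpha_{n-1}\to 0$, which the explicit formula (\ref{eq:opt}) forbids, since a persistent bias term $\bigl(1-(1-\gamma)\delta^*\bigr)^2c^2$ in the numerator together with the bounded denominator (again Proposition \ref{prop:bounds}) keeps the stepsize away from zero. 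You instead observe that $\frac{1}{1-\gamma}$ is the fixed point of the affine update, which turns the error $e^n=\frac{1}{1-\gamma}-\delta^n$ into an exactly multiplicative recursion, and you kill the resulting infinite product using $\log(1-x)\le -x$ and the divergence of $\sum\alpha_{n-1}$ supplied by Proposition \ref{prop:lowbound}. Both arguments are sound; yours is arguably cleaner and buys something extra, namely a convergence rate, since $e^n\le e^1\exp\bigl(-(1-\gamma)\sum_{k=2}^n\alpha_{k-1}\bigr)=O\bigl(n^{-(1-\gamma)^2}\bigr)$, whereas the paper's monotonicity-plus-contradiction argument is purely qualitative. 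Two minor points to tidy up: you should note that $e^1=\frac{1}{1-\gamma}-\alpha_0>0$ (immediate since $\alpha_0\le 1<\frac{1}{1-\gamma}$) before taking logarithms, and be aware that your dependency set differs from the paper's --- you lean on Proposition \ref{prop:lowbound}, whose proof in the appendix assumes $\alpha_0=1$, while the paper's proof needs only Proposition \ref{prop:bounds} and the formula (\ref{eq:opt}); since the paper states Proposition \ref{prop:lowbound} unconditionally and proves it before this point, this is a bookkeeping remark rather than a gap.
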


\begin{theorem}\label{thm:limitzero}
$\lim_{n\rightarrow\infty} \alpha_{n-1} = 0$.
\end{theorem}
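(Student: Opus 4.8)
The plan is to reduce the whole statement to the single fact that $\lambda^n \to 0$, since the theorem then follows immediately from the stepsize formula (\ref{eq:opt}). Throughout I assume $\sigma^2 > 0$, the genuinely noisy case; the noiseless case $\sigma^2 = 0$ is already handled by the preceding corollary, where $\alpha_{n-1} \equiv 1$. First I would dispose of the bias term: by Proposition \ref{prop:biasgoestozero} we have $\delta^{n-1} \to \frac{1}{1-\gamma}$, so the quantity $B^{n-1} := \left(1 - (1-\gamma)\delta^{n-1}\right)^2 c^2$ appearing in both the numerator and denominator of (\ref{eq:opt}) tends to $0$. Cancelling $\sigma^2$ (legitimate since $\sigma^2>0$) and writing $g(\lambda) := \frac{(1-\gamma)\lambda}{(1-\gamma)^2\lambda + 1}$ for the value of (\ref{eq:opt}) with the bias removed, a short estimate using $B^{n-1}\to 0$ together with the uniform bound $\lambda^{n-1}\le \frac{1}{\gamma(1-\gamma)}$ of Proposition \ref{prop:bounds} shows that $\alpha_{n-1} = g(\lambda^{n-1}) + \eta^{n-1}$ with $\eta^{n-1}\to 0$. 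Consequently, once $\lambda^{n-1}\to 0$ is established, continuity of $g$ at $0$ gives $\alpha_{n-1}\to g(0) = 0$, completing the proof.

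The core of the argument is therefore to prove $\lambda^n\to 0$. Substituting $\alpha_{n-1} = g(\lambda^{n-1}) + \eta^{n-1}$ into the exact recursion $\lambda^n = \alpha_{n-1}^2 + (1-(1-\gamma)\alpha_{n-1})^2\lambda^{n-1}$ from Proposition \ref{prop:deltalambda}, and using that the right-hand side is Lipschitz in $\alpha_{n-1}$ on the bounded range $[0,1]\times[0,\frac{1}{\gamma(1-\gamma)}]$, I would write
\[
\lambda^n = \lambda^{n-1} - \phi(\lambda^{n-1}) + E^{n-1},
\]
where the perturbation $E^{n-1}\to 0$ absorbs $\eta^{n-1}$, and $\phi(\lambda) := \lambda - \bigl[g(\lambda)^2 + (1-(1-\gamma)g(\lambda))^2\lambda\bigr]$. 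A direct algebraic simplification collapses this to $\phi(\lambda) = \frac{(1-\gamma)^2\lambda^2}{1 + (1-\gamma)^2\lambda}$, which is nonnegative and vanishes only at $\lambda = 0$. Intuitively, the unperturbed dynamics $\lambda \mapsto \lambda - \phi(\lambda)$ would drive $\lambda$ monotonically down to its unique fixed point $0$.

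Finally I would run a $\limsup$ argument to push the conclusion through the perturbation. Set $L = \limsup_n \lambda^n \ge 0$, choose a subsequence attaining $L$, and pass to a further subsequence along which $\lambda^{n-1}\to \ell$ for some $\ell$ (this limit exists by the uniform bound of Proposition \ref{prop:bounds}, and $\ell \le L$ since its indices also tend to infinity). Taking limits in the displayed recursion and using $E^{n-1}\to 0$ and the continuity of $\phi$ yields $L = \ell - \phi(\ell) \le \ell \le L$, which forces $\phi(\ell) = 0$, hence $\ell = 0$ and $L = 0$. Since $\lambda^n \ge 0$ for all $n$, this gives $\lambda^n \to 0$, and therefore $\alpha_{n-1} = g(\lambda^{n-1}) + \eta^{n-1} \to 0$.

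I expect the main obstacle to be exactly this last step. Because the residual bias makes $\alpha_{n-1}$ differ from $g(\lambda^{n-1})$, the sequence $\lambda^n$ need not be monotone, so one cannot simply invoke monotone convergence; and I have no quantitative rate for $\eta^{n-1}\to 0$. The $\limsup$/subsequence device circumvents both difficulties by using only that the drift $\phi$ is bounded away from $0$ whenever $\lambda^{n-1}$ is bounded away from $0$, together with $E^{n-1}\to 0$. The only other point requiring care is verifying that the perturbation estimates ($\eta^{n-1}\to 0$ and $E^{n-1}\to 0$) are genuinely uniform over the admissible range of $\lambda^{n-1}$, which is where the boundedness supplied by Proposition \ref{prop:bounds} is essential.
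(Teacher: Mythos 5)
Your proposal is correct and follows essentially the same route as the paper: both reduce the theorem to $\lambda^n \to 0$, identify the same one-step limit map (your $\ell \mapsto \ell - \phi(\ell) = \frac{\ell}{(1-\gamma)^2\ell+1}$ is exactly the paper's equation for $\lim_k \lambda^{n_k+1}$), and close with the same $\limsup$/double-subsequence argument exploiting boundedness from Proposition \ref{prop:bounds} and the vanishing bias from Proposition \ref{prop:biasgoestozero}. Your explicit handling of the $\sigma^2=0$ case and the uniform perturbation bound are minor refinements of the same argument, not a different proof.
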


\begin{proof}
It is enough to show that $\lambda^n \rightarrow 0$ and apply (\ref{eq:opt}) together with Proposition \ref{prop:biasgoestozero}. We show that every convergent subsequence of $\lambda^n$ must converge to zero using a proof by contradiction.

First, suppose that $n_k$ is a subsequence satisfying $\lim_{k\rightarrow \infty} \lambda^{n_k} = \ell$. Combining this with Proposition \ref{prop:biasgoestozero}, we return to (\ref{eq:opt}) and find
\begin{equation*}
\lim_{k\rightarrow \infty} \alpha_{n_k} = \frac{\left(1-\gamma\right) \ell}{\left(1-\gamma\right)^2 \ell + 1}.
\end{equation*}
We then return to Proposition \ref{prop:deltalambda} and derive
\begin{eqnarray}
\lim_{k\rightarrow\infty} \lambda^{n_k + 1} &=& \left[\frac{\left(1-\gamma\right)\ell}{\left(1-\gamma\right)^2 \ell + 1}\right]^2 + \left[1 -\frac{\left(1-\gamma\right)^2 \ell}{\left(1-\gamma\right)^2 \ell + 1}\right]^2 \ell\nonumber\\
&=& \frac{\ell}{\left(1-\gamma\right)^2 \ell + 1}.\label{eq:smallerlimit}
\end{eqnarray}
It follows from (\ref{eq:smallerlimit}) that, if $\ell > 0$, then
\begin{equation}\label{eq:strictlysmallerlimit}
\lim_{k\rightarrow\infty} \lambda^{n_k+1} < \lim_{k\rightarrow\infty} \lambda^{n_k}.
\end{equation}
By Proposition \ref{prop:bounds}, we know that the sequence $\left(\lambda^n\right)^{\infty}_{n=1}$ is bounded. Therefore, the set of accumulation points for this sequence is closed and bounded. Suppose that
\begin{equation*}
\limsup_{n\rightarrow\infty} \lambda^n = \lambda^*
\end{equation*}
and that $\lambda^* > 0$. Let $n_k$ be a subsequence with $\lambda^{n_k} \rightarrow \lambda^*$. The subsequence $\left(\lambda^{n_k - 1}\right)^{\infty}_{k=1}$ is bounded, and therefore must contain an additional convergent subsequence, which we denote by $m_k$. Suppose that $\lim_{k\rightarrow\infty} \lambda^{m_k} = \ell$. It must be the case that
\begin{equation*}
\lim_{k\rightarrow\infty} \lambda^{m_k+1} = \lim_{k\rightarrow\infty} \lambda^{n_k} = \lambda^*.
\end{equation*}
This implies that $\ell > 0$, because otherwise (\ref{eq:smallerlimit}) would imply that $\lambda^* = 0$. However, it then follows from (\ref{eq:strictlysmallerlimit}) that $\lambda^* < \ell$. This is impossible, because we took $\lambda^*$ to be the largest accumulation point of the sequence $\left(\lambda^n\right)^{\infty}_{n=1}$. It must therefore be the case that
\begin{equation*}
\limsup_{n\rightarrow\infty} \lambda^n = 0,
\end{equation*}
whence $\lambda^n \rightarrow 0$, as required.
\end{proof}

It follows immediately from these results that $\bar{v}^n\rightarrow v^*$ in $L^2$ and in probability. Observe that
\begin{eqnarray}
\Exp\left[\left(\bar{v}^n - v^*\right)^2\right] &=& \Exp\left[\left(\bar{v}^n - \Exp\bar{v}^n + \Exp\bar{v}^n - v^*\right)^2\right]\nonumber\\
&=& Var\left(\bar{v}^n\right) + \left(\Exp \bar{v}^n - v^*\right)^2\nonumber\\
&=& \lambda^n \sigma^2 + \left(\delta^n c - \frac{c}{1-\gamma}\right)^2.\label{eq:l2}
\end{eqnarray}
Together, Proposition \ref{prop:biasgoestozero} and Theorem \ref{thm:limitzero} imply that (\ref{eq:l2}) vanishes to zero as $n\rightarrow\infty$.

Proposition \ref{prop:lowbound} along with Theorem \ref{thm:limitzero} have important practical as well as theoretical implications. The lower bound provided by Proposition \ref{prop:lowbound} ensures that the stepsize will not decline too quickly. While this bound is provided by many standard rules, including $1/n$, we now have the benefit of a rule that is designed to minimize prediction error for faster convergence, but is still guaranteed to avoid the risk of stalling. The guarantee in Theorem \ref{thm:limitzero} that the stepsize will asymptotically approach zero is particularly valuable in applications where we are interested not just in the policy, but in the values themselves. For example, in finance, the value function is used to estimate the price of an option. In the fleet management application of \cite{SiDaGeGiNiPo09}, the value functions were used to estimate the marginal value of truck drivers. In both applications, it is essential to have an algorithm that will produce tight estimates of these values.

\subsection{Algorithmic procedure for general dynamic programs}\label{sec:unknownc}

We now discuss how (\ref{eq:opt}) can be adapted for a general dynamic program. The first step is to consider an extension of the single-state model where $c$ and $\sigma^2$ are unknown. In this case, we estimate the unknown quantities by smoothing on the observations $\chat^n$ and plugging these estimates into the expression for the optimal stepsize \citep[this is known as the plug-in principle; see e.g.][]{BiDo01}. Let
\begin{eqnarray}
\bar{c}^n &=& \left(1-\nu_{n-1}\right)\cbar^{n-1} + \nu_{n-1}\chat^n\label{eq:cbar}\\
\left(\bar{\sigma}^n\right)^2 &=& \left(1-\nu_{n-1}\right)\left(\bar{\sigma}^{n-1}\right)^2 + \nu_{n-1}\left(\chat^n - \bar{c}^{n-1}\right)^2\label{eq:sigma}
\end{eqnarray}
represent our estimates of the mean and variance of the rewards. The secondary stepsize $\nu_{n-1}$ is chosen according to some deterministic stepsize rule (e.g. set to a constant). Then, (\ref{eq:opt}) becomes
\begin{equation}\label{eq:optapp}
\alpha_{n-1} = \frac{\left(1-\gamma\right)\lambda^{n-1} \left(\bar{\sigma}^{n}\right)^2 + \left(1 - \left(1 - \gamma\right)\delta^{n-1}\right)^2 \left(\cbar^{n}\right)^2}{\left(1-\gamma\right)^2\lambda^{n-1} \left(\bar{\sigma}^{n}\right)^2 + \left(1 - \left(1 - \gamma\right)\delta^{n-1}\right)^2 \left(\cbar^{n}\right)^2 + \left(\bar{\sigma}^{n}\right)^2} \qquad n = 2,3,...
\end{equation}
where $\delta^{n-1}$, $\lambda^{n-1}$ are computed the same way as before.

At first glance, it appears that we run into the problem of needing a secondary stepsize to calculate an optimal one. However, it is important to note that the secondary stepsize $\nu_{n-1}$ is only required to estimate the parameters of the distribution of the one-period reward $\chat^n$. Unlike the sequence $\vbar^n$ of value function approximations, the one-period reward in the single-state problem is \textit{stationary}, and can be straightforwardly estimated from the random rewards collected in each time period.

The true significance of (\ref{eq:optapp}) is that it can be easily extended to a general MDP with many states and actions. In this case, we replace the random reward $\chat^n$ in (\ref{eq:cbar})-(\ref{eq:sigma}) by the one-period reward $C(S^n,x^n)$ earned by taking action $x^n$ in state $S^n$. The sequence of these rewards depends on the policy used to visit states; in approximate value iteration, this policy will change over time, thus making the sequence of rewards non-stationary. However, as discussed in Section \ref{sec:model}, the basic value-iteration update of (\ref{eq:basicadp}) eventually converges to an optimal policy, meaning that the expected one-period reward earned in a state converges to a single system-wide constant $\cbar$. This suggests that, in a general DP, it is sufficient to keep one single system-wide estimate $\bar{c}^n$ (and similarly $\bar{\sigma}^n$) rather than to store state-dependent estimates.

On the other hand, the quantities $\delta^n$ and $\lambda^n$ are related to the bias and variance of the value function approximation. This suggests that, in a general DP, they should be state-dependent. For example, if we use the Q-learning algorithm, we will have a separate approximation for each state-action pair, leading to a state-dependent stepsize. Figure \ref{fig:alg} describes an example implementation of OSAVI in a classic finite-state, finite-action MDP where a generic ADP algorithm is used with a lookup table approximation. In a more complex problem, if we employ a state aggregation method such as that of \cite{GePoKu08}, we would store a different $\delta^n$ and $\lambda^n$ for each block of the aggregation structure. The memory cost is similar to the procedure in \cite{ScZhLe13}, where two recursively updated quantities are stored for each estimated parameter.

\begin{figure}[h!]
 \mbox{}\hrulefill\mbox{}
 \begin{description}
    \item[1:] Initialize $\bar{V}^0(S,x)$ and $\alpha_0(S,x)$ for all $(S,x)$. Set $\delta^1(S,x) = \alpha_0(S,x)$ and $\lambda^1(S,x) = \alpha_0(S,x)^2$. Also initialize $\bar{c}^0$, $\bar{\sigma}^0$, $S^0$, and $x^0$. 
    \item[2:] Set $n = 1$, and generate $S^1$ from the transition function.
    \item[3:] Solve
    \begin{equation*}
    \hat{v}^n = \max_{x \in \mathcal{X}} C\left(S^n,x\right) + \gamma\bar{V}^{n-1}\left(S^n,x\right)
    \end{equation*}
    and let $x^n$ be the value of $x$ that achieves the maximum.
    \item[4:] Update the system-wide parameters
    \begin{eqnarray*}
    \bar{c}^n &=& \left(1-\nu_{n-1}\right)\bar{c}^{n-1} + \nu_{n-1}C\left(S^n,x^n\right),\\
    \left(\bar{\sigma}^n\right)^2 &=& \left(1-\nu_{n-1}\right)\left(\bar{\sigma}^{n-1}\right)^2 + \nu_{n-1}\left(C\left(S^n,x^n\right) - \bar{c}^{n-1}\right)^2.    
    \end{eqnarray*}
    \item[5:] If $n > 1$, calculate
    \begin{eqnarray}
    \hspace{-0.10in} &\,& \hspace{-0.10in} \alpha_{n-1}\left(S^{n-1},x^{n-1}\right)\\\nonumber
    \hspace{-0.10in} &=& \hspace{-0.10in} \frac{\left(1-\gamma\right)\lambda^{n-1}\left(S^{n-1},x^{n-1}\right) \left(\bar{\sigma}^{n}\right)^2 + \left(1 - \left(1 - \gamma\right)\delta^{n-1}\left(S^{n-1},x^{n-1}\right)\right)^2 \left(\cbar^{n}\right)^2}{\left(1-\gamma\right)^2\lambda^{n}\left(S^{n-1},x^{n-1}\right) \left(\bar{\sigma}^{n}\right)^2 + \left(1 - \left(1 - \gamma\right)\delta^{n-1}\left(S^{n-1},x^{n-1}\right)\right)^2 \left(\cbar^{n}\right)^2 + \left(\bar{\sigma}^{n}\right)^2}.\label{eq:optadp}
    \end{eqnarray}
    \item[6:] Update the value function approximation using
    \begin{equation*}
    \bar{V}^n\left(S^{n-1},x^{n-1}\right) = \left(1-\alpha_{n-1}\right)\bar{V}^{n-1}\left(S^{n-1},x^{n-1}\right) + \alpha_{n-1}\hat{v}^n.
    \end{equation*}
    \item[7:] Update the stepsize parameters using
    \begin{eqnarray*}
      \delta^n\left(S^{n-1},x^{n-1}\right) \hspace{-0.10in} &=& \hspace{-0.10in} \alpha_{n-1}\left(S^{n-1},x^{n-1}\right) + \left(1-\left(1-\gamma\right)\alpha_{n-1}\left(S^{n-1},x^{n-1}\right)\right)\delta^{n-1}\left(S^{n-1},x^{n-1}\right),\\
      \lambda^n\left(S^{n-1},x^{n-1}\right) \hspace{-0.10in} &=& \hspace{-0.10in} \alpha^2_{n-1}\left(S^{n-1},x^{n-1}\right) + \left(1-\left(1-\gamma\right)\alpha_{n-1}\left(S^{n-1},x^{n-1}\right)\right)^2\lambda^{n-1}\left(S^{n-1},x^{n-1}\right).
    \end{eqnarray*}
    \item[8:] Generate $S^{n+1}$ from the transition function or by following a target policy.
    \item[9:] Increment $n$ and return to Step 3.
 \end{description}
 \mbox{}\hrulefill\mbox{}
 \caption{Example implementation of infinite-horizon OSAVI in a finite-state, finite-action MDP with a generic ADP algorithm.}\label{fig:alg}
\end{figure}

In a general ADP setting, we suggest using a constant stepsize in (\ref{eq:cbar}), e.g. $\nu_{n-1}=0.2$, to avoid giving equal weight to early observations taken in the transient period before the MDP has reached steady state, while the probability of being in a state is still changing with the policy. Our numerical work suggests that performance is not very sensitive to the choice of $\nu_{n-1}$.


Finally, we briefly note that our convergence analysis in Section \ref{sec:convergence} mostly carries over to the general case. First, the bound in Proposition \ref{prop:lowbound} still holds almost surely, since the proof holds for arbitrary values of $c$ and $\sigma$, even if they change between iterations. The bounds in Proposition \ref{prop:bounds} also hold, since the proofs only use the functional forms of the recursive updates for $\delta^n$ and $\lambda^n$, and hold for any arbitrary stepsize sequence. Consequently, Theorem \ref{thm:limitzero} still holds a.s. as long as the sample-based approximations $\bar{c}^n,\bar{\sigma}^n$ do not explode to infinity on any subsequence. If these approximations have a type of convergence (e.g. in probability), we will have $\alpha_{n-1}\rightarrow 0$ also in that sense.

\subsection{Extension to finite horizon}\label{sec:finitehorizon}

While it is possible to solve finite horizons using the same algorithmic strategy, we observe that optimal stepsizes vary systematically as a function of the number of time periods to the end of horizon.  The best stepsize for states at the end of the horizon is very close to $1/n$, because we do not face the need to sum rewards over a horizon.  Optimal stepsizes then increase as we move closer to the first time period.

We can capture this behavior using a finite horizon version (with $T$ time stages) of our single-state, single-action problem. In this setting, approximate value iteration is replaced with approximate dynamic programming. Equations (\ref{eq:vhat}) and (\ref{eq:vbar}) become
\begin{eqnarray}
\vhat^n_t &=& \chat^n_t + \gamma\vbar^{n-1}_{t+1}\\
\vbar^n_t &=& \left(1-\alpha_{n-1,t}\right)\vbar^{n-1}_t + \alpha_{n-1,t} \vhat^n_t\mbox{.}
\end{eqnarray}
These equations are solved for $t = 1,...,T-1$ in each time step $n$. We assume that $\vbar^n_T = 0$ for all $n$, and that the observations $\chat^n_t$ are independent and identically distributed for all $n$ and $t$.

Our analysis can easily be extended to this setting. First, we can obtain expressions for the expected value and variance of $\vbar^n_t$ that generalize our derivations of $\delta^n$ and $\lambda^n$ in Section \ref{sec:derivation}. The following proposition describes these expressions.

\begin{proposition}\label{prop:findl}
For $t = 1,...,T-1$, define
\begin{eqnarray*}
\delta^n_t = \left\{
\begin{array}{l l}
  \alpha_{0,t} & n = 1\\
  \left(1+\gamma\delta^{n-1}_{t+1}\right)\alpha_{n-1,t} + \left(1-\alpha_{n-1,t}\right)\delta^{n-1}_t & n > 1
\end{array}
\right.
\end{eqnarray*}
Also, for $t,t' = 1,...,T-1$, let
\begin{eqnarray*}
\lambda^n_{t,t'} = \left\{
\begin{array}{l l}
  \alpha^2_{0,t}1_{\left\{t = t'\right\}} & n = 1\\
  \alpha^2_{n-1,t}1_{\left\{t=t'\right\}} + J^{n-1}_{t,t'} + K^{n-1}_{t,t'} + L^{n-1}_{t,t'} + M^{n-1}_{t,t'} & n > 1
\end{array}
\right.
\end{eqnarray*}
where
\begin{eqnarray*}
J^{n-1}_{t,t'} &=& \left(1-\alpha_{n-1,t}\right)\left(1-\alpha_{n-1,t'}\right)\lambda^{n-1}_{t,t'}\mbox{,}\\
K^{n-1}_{t,t'} &=& \gamma\left(1-\alpha_{n-1,t}\right)\alpha_{n-1,t'}\lambda^{n-1}_{t,t'+1}\mbox{,}\\
L^{n-1}_{t,t'} &=& \gamma\alpha_{n-1,t}\left(1-\alpha_{n-1,t'}\right)\lambda^{n-1}_{t+1,t'}\mbox{,}\\
M^{n-1}_{t,t'} &=& \gamma^2\alpha_{n-1,t}\alpha_{n-1,t'}\lambda^{n-1}_{t+1,t'+1}\mbox{.}
\end{eqnarray*}
Then, $\Exp\left(\vbar^n_t\right) = \delta^n_t c$ and $Cov\left(\vbar^n_t,\vbar^n_{t'}\right) = \lambda^n_{t,t'} \sigma^2$.
\end{proposition}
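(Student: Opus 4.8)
The plan is to prove both identities simultaneously by induction on $n$, mirroring the argument for Proposition \ref{prop:deltalambda} but now tracking the full stagewise covariance structure rather than a single variance sequence. First I would substitute the $\vhat^n_t$ recursion into the $\vbar^n_t$ recursion to obtain the explicit one-step form
\[
\vbar^n_t = \left(1-\alpha_{n-1,t}\right)\vbar^{n-1}_t + \alpha_{n-1,t}\chat^n_t + \gamma\alpha_{n-1,t}\vbar^{n-1}_{t+1},
\]
valid for $t = 1,\dots,T-1$, and adopt the boundary conventions $\delta^n_T = 0$ and $\lambda^n_{T,t'} = \lambda^n_{t,T} = 0$ for all $n$, consistent with the assumption $\vbar^n_T = 0$. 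The base case $n = 1$ is immediate: since $\vbar^1_t = \alpha_{0,t}\chat^1_t$, we get $\Exp\vbar^1_t = \alpha_{0,t}c = \delta^1_t c$ and $Cov\left(\vbar^1_t,\vbar^1_{t'}\right) = \alpha^2_{0,t}1_{\left\{t=t'\right\}}\sigma^2 = \lambda^1_{t,t'}\sigma^2$, using that distinct observations at the first iteration are independent.

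For the inductive step on the mean, I would take expectations in the one-step recursion and substitute $\Exp\vbar^{n-1}_t = \delta^{n-1}_t c$ and $\Exp\vbar^{n-1}_{t+1} = \delta^{n-1}_{t+1} c$; collecting terms yields exactly $\Exp\vbar^n_t = \left[\left(1+\gamma\delta^{n-1}_{t+1}\right)\alpha_{n-1,t} + \left(1-\alpha_{n-1,t}\right)\delta^{n-1}_t\right]c = \delta^n_t c$. The covariance step is the substantive one. The decisive structural fact is that $\vbar^{n-1}_s$ depends only on the observations $\chat^{n'}_{s'}$ with $n' \leq n-1$, so by the i.i.d.\ assumption every iteration-$n$ observation $\chat^n_t$ is independent of all iteration-$(n-1)$ quantities. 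Applying bilinearity of covariance to the one-step recursions for $\vbar^n_t$ and $\vbar^n_{t'}$, all cross terms pairing a fresh observation with a lagged $\vbar^{n-1}$ vanish, leaving the observation contribution $\alpha_{n-1,t}\alpha_{n-1,t'}Cov\left(\chat^n_t,\chat^n_{t'}\right) = \alpha^2_{n-1,t}1_{\left\{t=t'\right\}}\sigma^2$ together with the four-term expansion of the covariance of the two lagged linear combinations $\left(1-\alpha_{n-1,t}\right)\vbar^{n-1}_t + \gamma\alpha_{n-1,t}\vbar^{n-1}_{t+1}$ and its $t'$-analogue. Invoking $Cov\left(\vbar^{n-1}_s,\vbar^{n-1}_{s'}\right) = \lambda^{n-1}_{s,s'}\sigma^2$ on each of these four products recovers precisely $J^{n-1}_{t,t'} + K^{n-1}_{t,t'} + L^{n-1}_{t,t'} + M^{n-1}_{t,t'}$, completing the induction.

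The main obstacle is conceptual rather than computational: unlike the infinite-horizon case, the recursion couples stage $t$ to its successor $t+1$, so the variance of $\vbar^n_t$ cannot be propagated in isolation — it draws on the off-diagonal covariances $\lambda^{n-1}_{t,t'+1}$ and $\lambda^{n-1}_{t+1,t'}$ from the previous iteration. One must therefore carry the entire covariance array $\left(\lambda^n_{t,t'}\right)$ through the induction, and take care that the boundary terms at $t+1 = T$ or $t'+1 = T$ are annihilated by the conventions above, which is exactly why the $K^{n-1}_{t,t'}$, $L^{n-1}_{t,t'}$, and $M^{n-1}_{t,t'}$ contributions silently drop out whenever a successor index reaches $T$. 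The remaining bookkeeping — verifying that the independence argument correctly zeroes the mixed terms and that the indicator $1_{\left\{t=t'\right\}}$ arises only from the diagonal observation term — is routine once this cross-stage coupling is made explicit.
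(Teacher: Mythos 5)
Your proof is correct and follows exactly the route the paper intends: the paper omits the details, stating only that ``the proof uses the same logic as the proof of Proposition \ref{prop:deltalambda},'' and your induction on $n$ with the explicit one-step recursion, independence of the fresh observations $\chat^n_t$ from all iteration-$(n-1)$ quantities, and bilinear expansion of the covariance is precisely that logic extended to the full covariance array. The boundary conventions $\delta^n_T = 0$ and $\lambda^n_{T,t'} = \lambda^n_{t,T} = 0$ you adopt are consistent with the paper's assumption $\vbar^n_T = 0$, and the bookkeeping checks out term by term.
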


The proof uses the same logic as the proof of Proposition \ref{prop:deltalambda}. We can think of $\lambda^n$ as a symmetric matrix that can be updated recursively using the elements of $\lambda^{n-1}$. The matrix starts out diagonal, and as $n$ increases, the covariances gradually expand from the main diagonal outward. Next, we can repeat the analysis of Section \ref{sec:derivation} to solve
\begin{equation*}
\min_{\alpha_{n-1,t}\in\left[0,1\right]} \Exp\left[ \left(\vbar^n_t\left(\alpha_{n-1,t}\right) - \Exp \vhat^n_t\right)^2\right]\mbox{.}
\end{equation*}
The next result gives the solution.

\begin{theorem}\label{thm:finopt}
If $\alpha_{0,t}$ is given, the optimal stepsize for time $t$ at iteration $n$ is given by
\begin{equation}\label{eq:optfin}
\alpha_{n-1,t} = \frac{\left(\lambda^{n-1}_{t,t} - \gamma\lambda^{n-1}_{t,t+1}\right)\sigma^2 + \left(1- \delta^{n-1}_t+\gamma\delta^{n-1}_{t+1}\right)^2 c^2}{\left(\lambda^{n-1}_{t,t}-2\gamma\lambda^{n-1}_{t,t+1} + \gamma^2 \lambda^{n-1}_{t+1,t+1}\right)\sigma^2 + \left(1-\delta^{n-1}_t +\gamma\delta^{n-1}_{t+1}\right)^2 c^2 + \sigma^2}\mbox{.}
\end{equation}
\end{theorem}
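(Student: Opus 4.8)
The plan is to mirror the infinite-horizon derivation of Section \ref{sec:derivation}, adapting each moment computation to the fact that the finite-horizon observation $\vhat^n_t = \chat^n_t + \gamma\vbar^{n-1}_{t+1}$ couples stage $t$ to the downstream stage $t+1$. First I would expand the prediction error using the smoothing recursion, writing $\vbar^n_t - \Exp\vhat^n_t = \left(1-\alpha_{n-1,t}\right)\left(\vbar^{n-1}_t - \Exp\vhat^n_t\right) + \alpha_{n-1,t}\left(\vhat^n_t - \Exp\vhat^n_t\right)$. Squaring and taking expectations yields the exact analogue of (\ref{eq:firstcov}), with $\vbar^{n-1}$ replaced by $\vbar^{n-1}_t$ and $\vhat^n$ by $\vhat^n_t$. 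This is a quadratic in $\alpha_{n-1,t}$ whose leading coefficient equals $\Exp\left[\left(\vbar^{n-1}_t - \vhat^n_t\right)^2\right] \geq 0$, reproducing the convexity argument of Proposition \ref{prop:convex} and justifying setting the derivative to zero to obtain the analogue of the first-order condition (\ref{eq:firstopt}).

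The substance lies in evaluating the three moments in that first-order condition using Proposition \ref{prop:findl}. For the observation variance, independence of $\chat^n_t$ and $\vbar^{n-1}_{t+1}$ (the latter depending only on rewards through iteration $n-1$) gives $Var\left(\vhat^n_t\right) = \left(1 + \gamma^2\lambda^{n-1}_{t+1,t+1}\right)\sigma^2$. For the bias-variance term, the decomposition used in (\ref{eq:biasvar}) gives $Var\left(\vbar^{n-1}_t\right) + \left(\Exp\vbar^{n-1}_t - \Exp\vhat^n_t\right)^2$; since $\Exp\vhat^n_t = \left(1 + \gamma\delta^{n-1}_{t+1}\right)c$ and $\Exp\vbar^{n-1}_t = \delta^{n-1}_t c$, the bias equals $-\left(1 - \delta^{n-1}_t + \gamma\delta^{n-1}_{t+1}\right)c$ and the term becomes $\lambda^{n-1}_{t,t}\sigma^2 + \left(1 - \delta^{n-1}_t + \gamma\delta^{n-1}_{t+1}\right)^2 c^2$. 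The genuinely new computation is the covariance: expanding $Cov\left(\vbar^{n-1}_t, \chat^n_t + \gamma\vbar^{n-1}_{t+1}\right)$, the reward piece vanishes by independence and the remainder yields $\gamma\lambda^{n-1}_{t,t+1}\sigma^2$, introducing the cross-stage covariance that is absent from the infinite-horizon case (\ref{eq:cov}), where it collapsed to $\gamma Var\left(\vbar^{n-1}\right)$.

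Finally I would substitute these three expressions into the first-order condition and collect terms. The numerator becomes $\left(\lambda^{n-1}_{t,t} - \gamma\lambda^{n-1}_{t,t+1}\right)\sigma^2 + \left(1 - \delta^{n-1}_t + \gamma\delta^{n-1}_{t+1}\right)^2 c^2$, and the denominator becomes $\left(\lambda^{n-1}_{t,t} - 2\gamma\lambda^{n-1}_{t,t+1} + \gamma^2\lambda^{n-1}_{t+1,t+1}\right)\sigma^2 + \left(1 - \delta^{n-1}_t + \gamma\delta^{n-1}_{t+1}\right)^2 c^2 + \sigma^2$, matching (\ref{eq:optfin}). All the algebra is routine; the one place demanding care is keeping the three distinct second moments $\lambda^{n-1}_{t,t}$, $\lambda^{n-1}_{t,t+1}$, and $\lambda^{n-1}_{t+1,t+1}$ separate and recognizing, via Proposition \ref{prop:findl}, that $\vbar^{n-1}_t$ and $\vbar^{n-1}_{t+1}$ are correlated. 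I expect this cross-stage dependence to be the main obstacle, as it is the sole feature genuinely new relative to Section \ref{sec:derivation}.
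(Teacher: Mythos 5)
Your proposal is correct and follows exactly the route the paper intends (the paper gives no explicit proof of Theorem \ref{thm:finopt}, saying only to ``repeat the analysis of Section \ref{sec:derivation}''): expand the prediction error, invoke convexity to reduce to the first-order condition \eqref{eq:firstopt}, and evaluate the three moments via Proposition \ref{prop:findl}, with the cross-stage covariance $Cov\left(\vbar^{n-1}_t,\vhat^n_t\right)=\gamma\lambda^{n-1}_{t,t+1}\sigma^2$ correctly replacing \eqref{eq:cov}; all three moment computations and the final collection of terms check out. Your only deviation is incidental but slightly cleaner: you identify the leading coefficient of the quadratic as $\Exp\left[\left(\vbar^{n-1}_t-\vhat^n_t\right)^2\right]\geq 0$ directly, which gives convexity without the structure-specific argument of Proposition \ref{prop:convex}.
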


In the infinite-horizon case, this reduces to our original formula in (\ref{eq:opt}). The finite-horizon formula requires us to store more parameters in the form of a matrix $\lambda^n$, which has the potential to incur substantially greater computational cost. The benefit is that we can now optimally vary the stepsize by $t$. If $c$ and $\sigma^2$ are unknown, we can adapt the approximation procedure outlined in Section \ref{sec:unknownc}, and replace the unknown values in (\ref{eq:optfin}) with $\cbar^n$ and $\left(\bar{\sigma}^n\right)^2$.

\section{Experimental study: one state, one action}\label{sec:mainexp}

We first study the performance of our stepsize rule on an instance of the single-state, single-action problem. This allows us to obtain insights into the sensitivity of performance with respect to different problem parameters. We considered normally distributed rewards with mean $c = 1$ and standard deviation $\sigma = 1$, with $\gamma = 0.9$ as the discount factor. The optimal value for this problem is $V^* = 10$. All policies used $\bar{v}^0 = 0$ as the initial approximation. Furthermore, all sample-based parameters for these policies (e.g. $\bar{c}^0$ and $\bar{\sigma}^0$ for OSAVI) were initialized to zero here and throughout all parts of our study. Five different stepsize rules were implemented; we briefly describe them as follows.

\textit{Optimal stepsize for approximate value iteration (OSAVI).} We use the approximate version of the optimal stepsize, given by (\ref{eq:optadp}). The secondary stepsize $\nu_{n-1}$ was set to $0.2$.

\textit{Bias-adjusted Kalman filter (OSA/BAKF).} We use the approximate version of the OSA/BAKF algorithm in \cite[Fig. 4]{GePo06}. Like OSAVI, this stepsize minimizes a form of the prediction error for a scalar signal processing problem, but assumes that observations are independent. A secondary stepsize rule $\bar{\nu}_{n-1} = 0.05$ is used to estimate the bias of the value function approximation (unlike OSAVI, which uses a closed-form expression for this quantity).

\textit{McClain's rule.} McClain's stepsize formula is given by
\begin{eqnarray*}
\alpha_{n} = \left\{
\begin{array}{c l}
  1 & \mbox{ if $n=1$}\\
  \frac{\alpha_{n-1}}{1+\alpha_{n-1}-\bar{\alpha}} & \mbox{ otherwise,}
\end{array}
\right.
\end{eqnarray*}
where $\bar{\alpha}$ is a tunable parameter. This stepsize behaves like the $1/n$ rule in early iterations, but quickly converges to the limit point $\bar{\alpha}$, and then behaves more like a constant stepsize rule.  This tends to happen within approximately $10$ iterations. For our experiments, we used $\bar{\alpha} = 0.1$; the issue of tuning $\bar{\alpha}$ is discussed in Section \ref{sec:tunable}. McClain's rule should be viewed as a slightly more sophisticated version of a constant stepsize.

\textit{Harmonic stepsize.} This deterministic rule is given by $\alpha_{n-1} = \frac{a}{a+n}$, where $a > 0$ is a tunable parameter. A value of $a = 10$ yielded good performance for our choice of problem parameters. However, the harmonic stepsize is sensitive to the choice of the tunable parameter $a$, which is highly problem dependent.  If we expect good convergence in a few hundred iterations, $a$ on the order of 5 or 10 may work quite well.  On the other hand, if we anticipate running our algorithm millions of iterations (which is not uncommon in reinforcement learning), we might choose $a$ on the order of 10,000 or higher. This issue is discussed further in Section \ref{sec:tunable}.

\textit{Incremental Delta-Bar-Delta (IDBD)}. This rule, introduced by \cite{Sutton92}, is given by $\alpha_{n-1} = \min\left\{1,\exp\left(\Delta_{n-1}\right)\right\}$, where $\Delta_n = \Delta_{n-1} + \theta\left(\hat{v}^n - \bar{v}^{n-1}\right)h_{n-1}$ and $h_n = \left(1-\alpha_{n-1}\right)h_{n-1} + \alpha_{n-1}\left(\hat{v}^n - \bar{v}^{n-1}\right)$. This is an example of an exponentiated gradient method, where averaging is performed on the logarithm of the stepsize. We used $\theta = 0.001$ as the tunable parameter.

We also considered the polynomial stepsize $\alpha_{n-1} = 1/n^{\beta}$, but it consistently underperformed the rules listed above, and is omitted from the subsequent analysis. The constant rule $\alpha_{n-1} = \bar{\alpha}$ yielded results very similar to McClain's rule, and is also omitted.

\subsection{Numerical evaluation of stepsize rules}

\begin{figure}[t]
\centering
\includegraphics[scale=0.3]{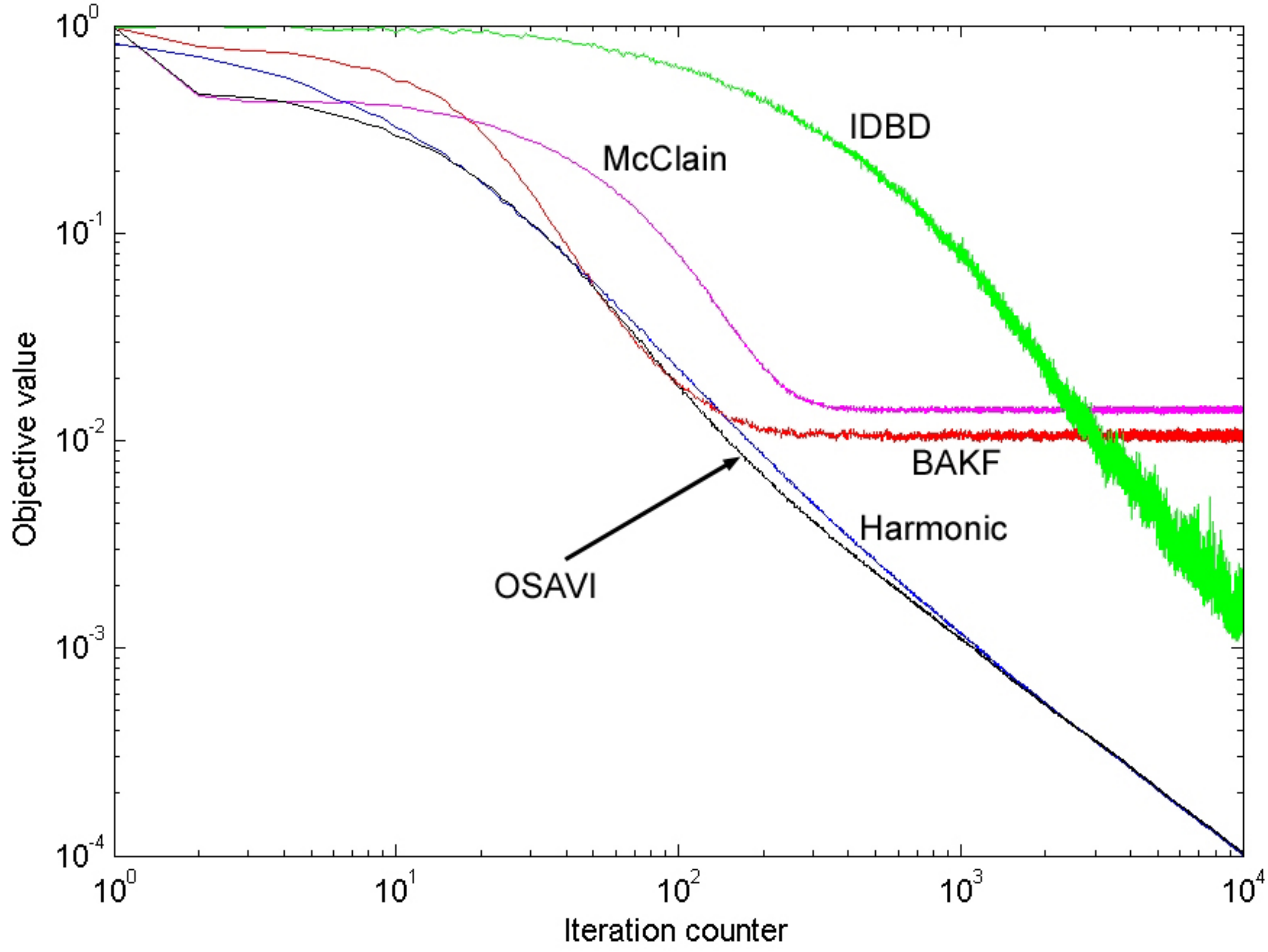}
\caption{Objective values achieved by each stepsize rule over $10^4$ iterations.}
\label{fig:mainplot}
\end{figure}

Figure \ref{fig:mainplot} shows the value of the objective function in (\ref{eq:obj}) achieved by each stepsize rule over $10^4$ iterations. The OSAVI rule consistently achieves the best performance (lowest objective value). However, the harmonic stepsize, when properly tuned, performs comparably. The BAKF and McClain rules level off around an objective value of $10^{-2}$. Each data point in Figure \ref{fig:mainplot} is an average over an ``outer loop'' of $10^4$ simulations.

It should be noted that, while IDBD displays the slowest convergence early on, it eventually overtakes BAKF and McClain's rule and continues to exhibit improvement in the late iterations. In the single-state setting, we found that it was less sensitive to its tunable parameter than the harmonic rule, and also produced less volatile stepsizes than BAKF. We will examine the performance of this rule in multi-stage problems later on. By contrast, the other benchmarks (harmonic, McClain, and BAKF) were fairly sensitive to their tunable parameters. We discuss this below in the context of the single-state problem, which allows us to examine tuning issues with a minimal number of other problem inputs.

\subsection{Discussion of tunable parameters}\label{sec:tunable}

We begin by considering the approximate BAKF rule, which uses a secondary stepsize $\bar{\nu}_{n-1}$ to estimate the bias $\beta^n$. Figure \ref{fig:osatuning} shows the effect of varying $\bar{\nu}_{n-1}$ on the objective value achieved by BAKF (with the optimal stepsize shown for comparison). We see that, when we use a constant value for the secondary stepsize (e.g. $\bar{\nu}_{n-1} = 0.05$), there is a clear tradeoff between performance in the early and late iterations. Smaller values of $\bar{\nu}_{n-1}$ result in better performance in the long run (the objective value achieved by BAKF plateaus at a lower level), but worse performance in the short run. In terms of the quality of our approximation of $V^*$, smaller constants cause slower convergence, but more stable estimates.

\begin{figure}[t]
\centering
\subfigure[BAKF rule.]{
\includegraphics[width=3.1in]{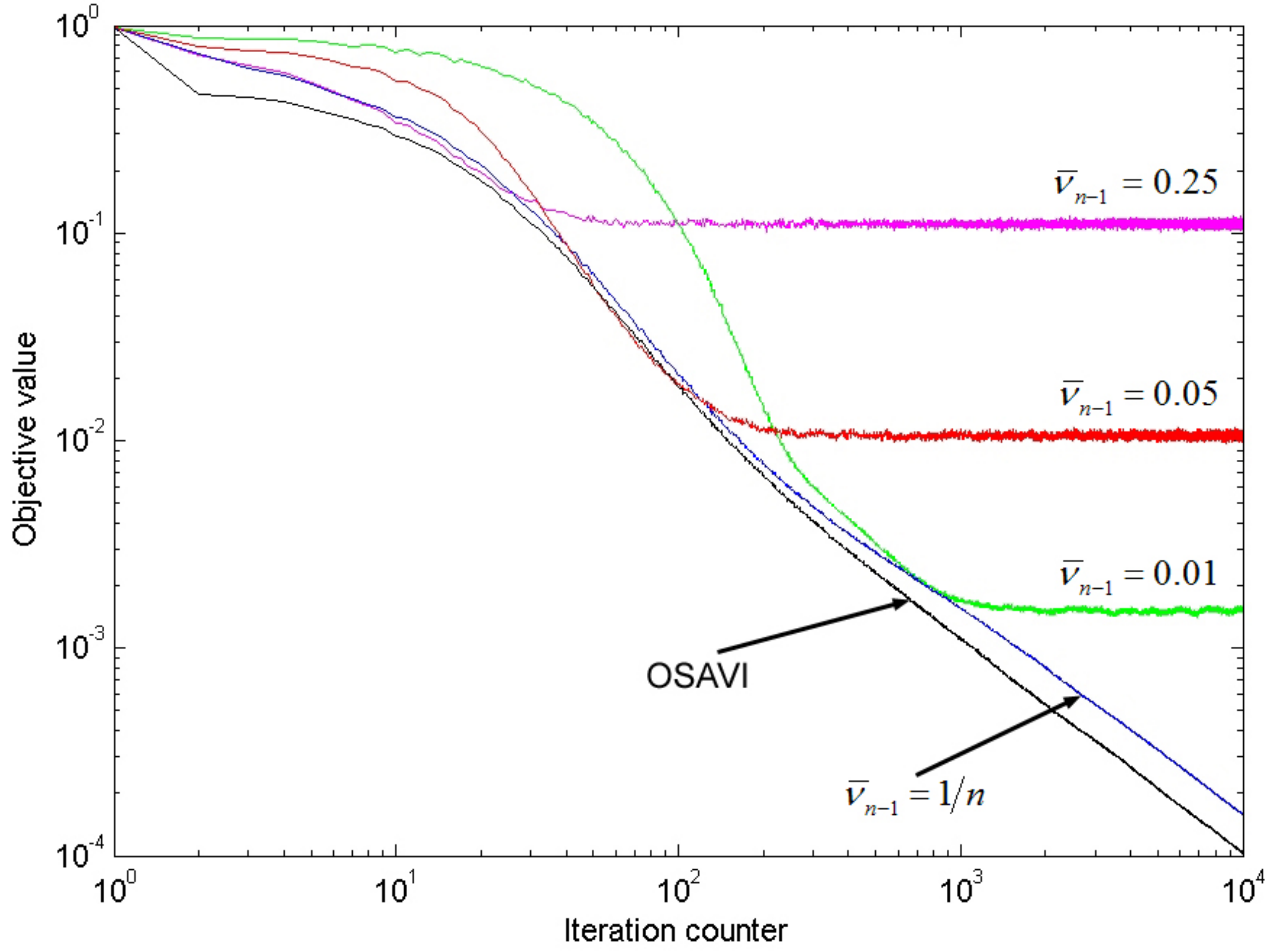}
\label{fig:osatuning}
}
\subfigure[OSAVI rule.]{
\includegraphics[width=3.1in]{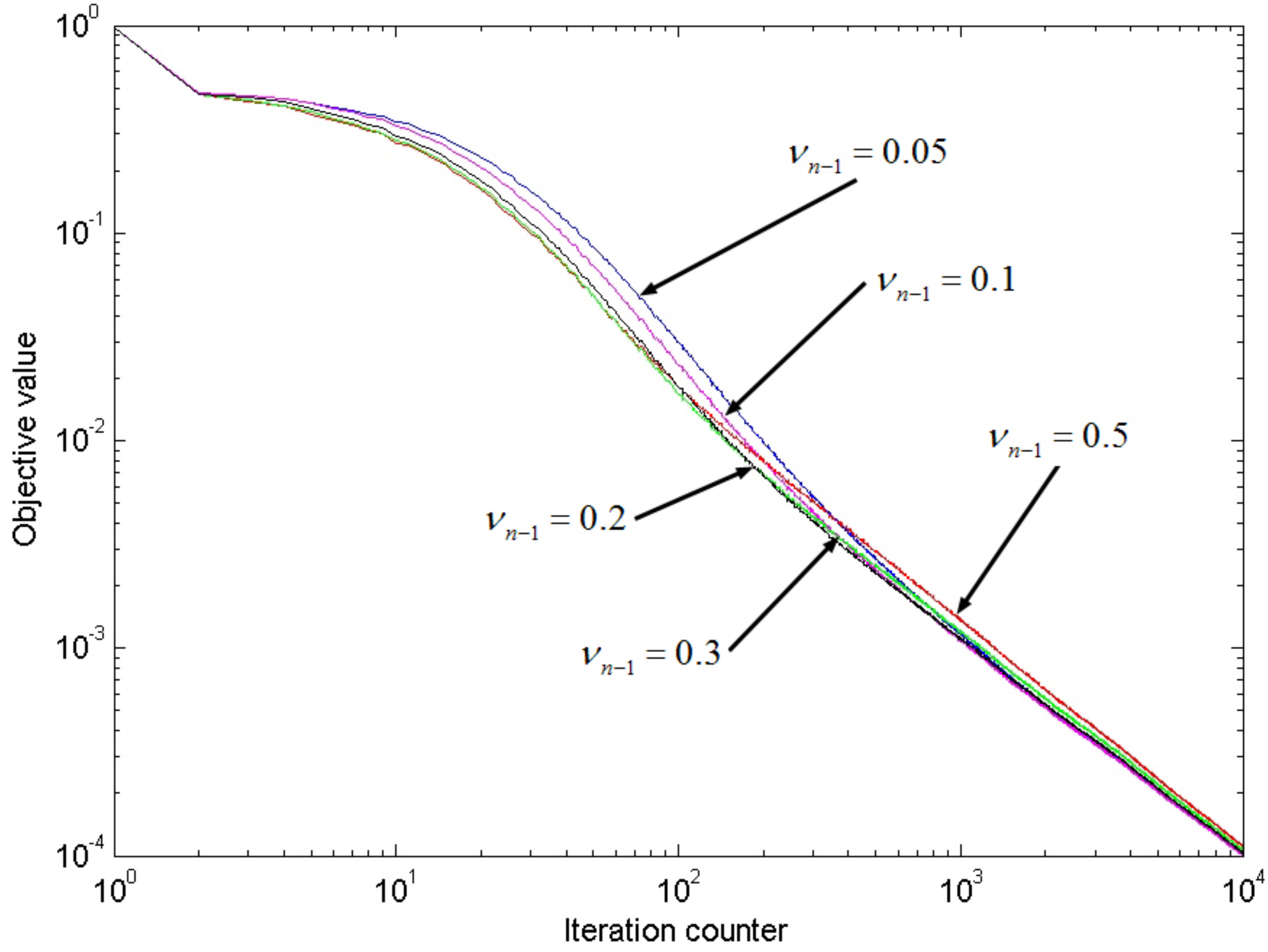}
\label{fig:opttuning}
}
\caption{Effect of the secondary parameter $\nu_{n-1}$ on the objective values achieved by (a) the approximate BAKF rule, and (b) the approximate optimal rule.}
\label{fig:optosatuning}
\end{figure}

It is also necessary to note one special case, where $\bar{\nu}_{n-1} = 1/n$. If we use the $1/n$ rule for the secondary stepsize, the objective value achieved by BAKF declines to zero in the long run. This is not the case when we use a constant stepsize. Furthermore, the use of the $1/n$ rule produces very close performance to that of OSAVI. However, in a general MDP, where there are many different rewards, a constant stepsize may be better able to handle the transient phase of the MDP. For this reason, we focus primarily on constant values of $\bar{\nu}_{n-1}$ in this study.

Even with a declining secondary stepsize, the BAKF rule is outperformed by OSAVI with a simple constant secondary stepsize of $\nu_{n-1} = 0.2$. The results for different values of $\bar{\nu}_{n-1}$ indicate that BAKF is quite sensitive to the choice of $\bar{\nu}_{n-1}$.

Figure \ref{fig:opttuning} suggests that OSAVI is relatively insensitive to the choice of secondary stepsize. The lines in Figure \ref{fig:opttuning} represent the performance of OSAVI for values of $\nu_{n-1}$ ranging from $0.05$ to as high as $0.5$. We see that these changes have a much smaller effect on the performance of OSAVI than varying $\bar{\nu}_{n-1}$ had on the BAKF rule. Very small values of $\nu_{n-1}$, such as $0.05$, do yield slightly poorer performance, but there is little difference between $0.2$ and $0.5$. Furthermore, the objective value achieved by OSAVI declines to zero for each constant value of $\nu_{n-1}$, whereas BAKF always levels off under a constant secondary stepsize. We conclude that OSAVI is more robust than BAKF, and requires less tuning of the secondary stepsize.

\begin{figure}[t]
\centering
\subfigure[McClain's rule.]{
\includegraphics[width=3.1in]{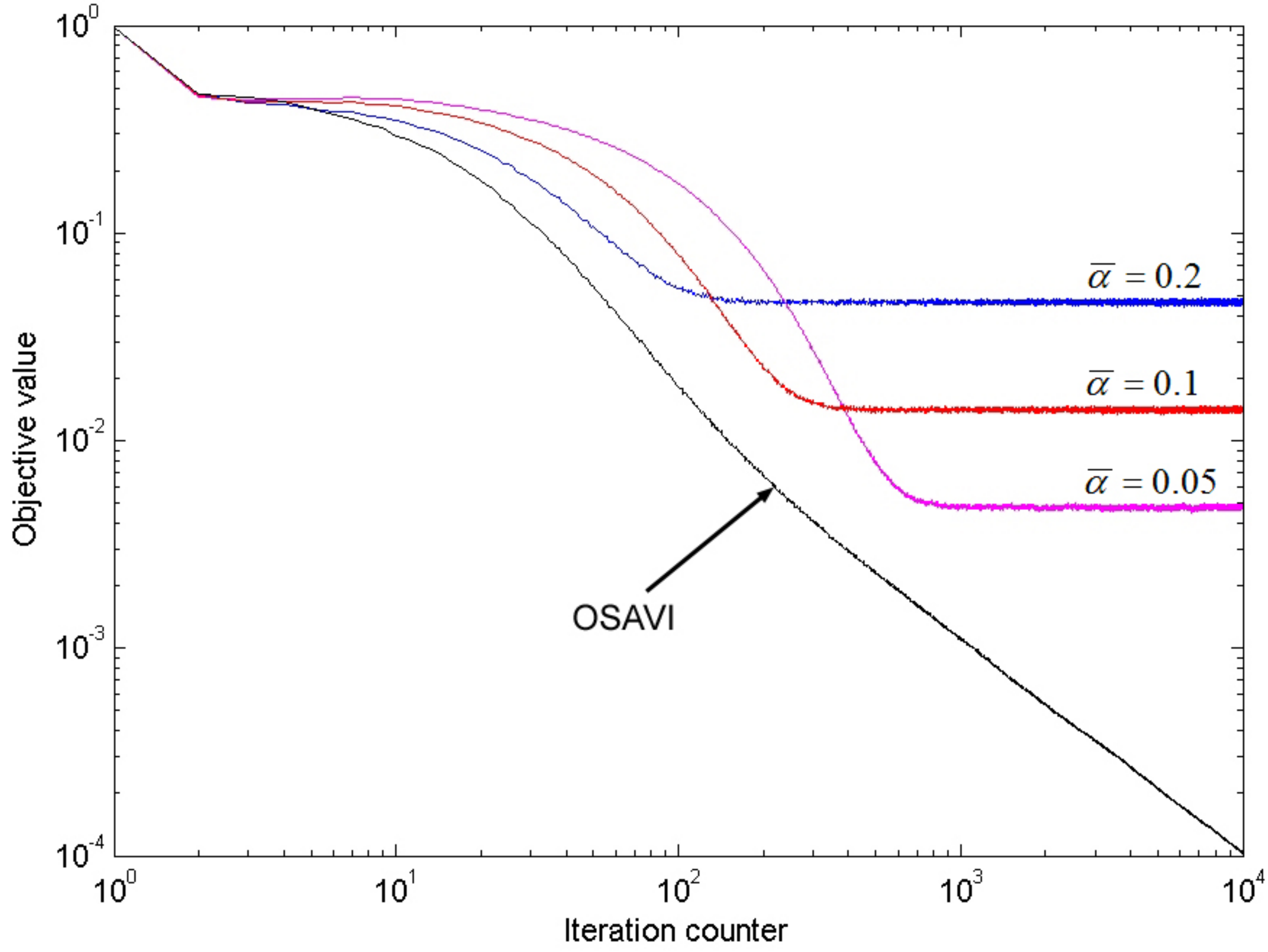}
\label{fig:mcctuning}
}
\subfigure[Harmonic rule.]{
\includegraphics[width=3.1in]{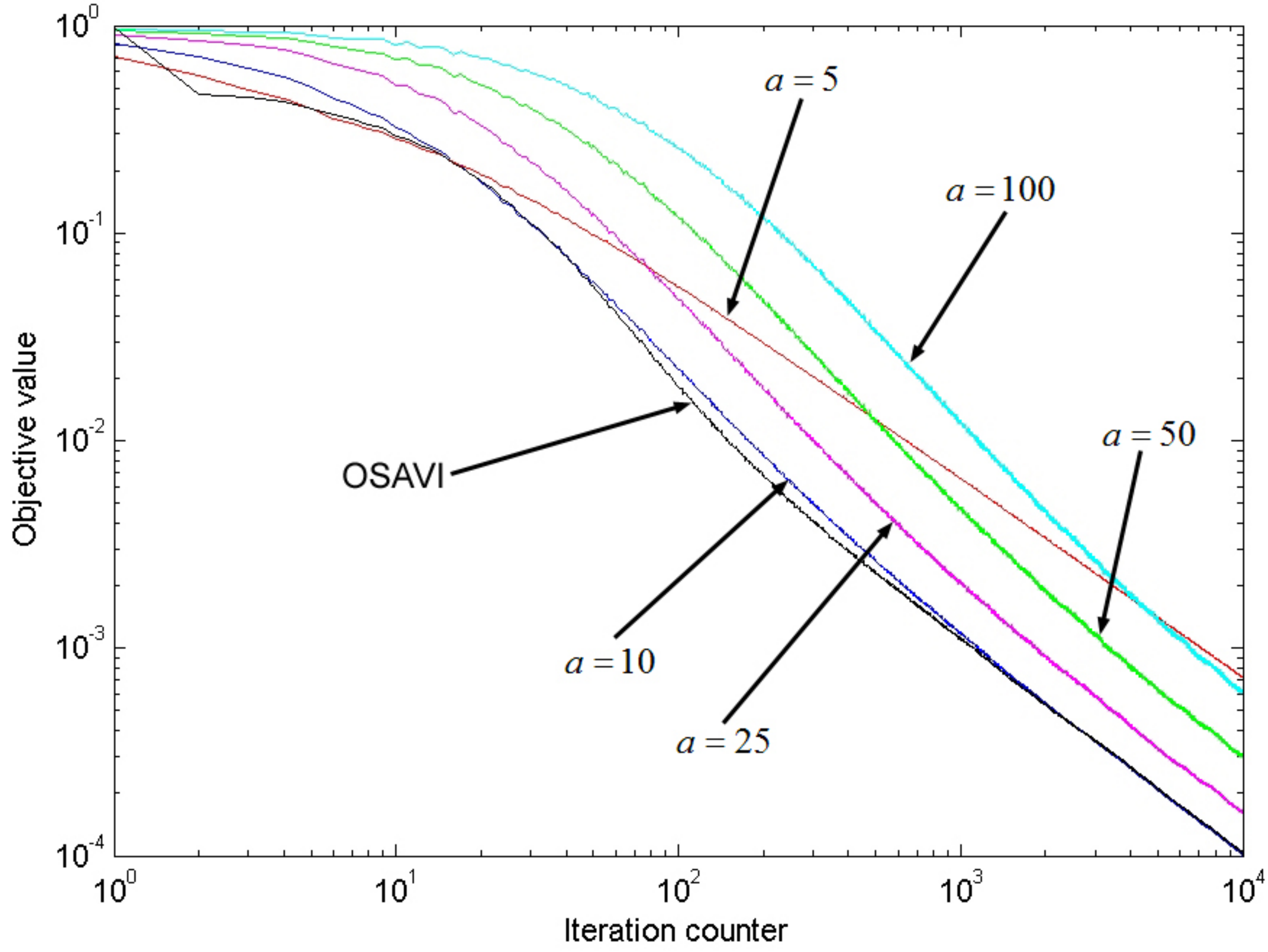}
\label{fig:hartuning}
}
\caption{Sensitivity of (a) McClain's rule and (b) the harmonic rule to their respective tunable parameters.}
\label{fig:mcchartuning}
\end{figure}

Figure \ref{fig:mcctuning} shows the sensitivity of McClain's rule to the choice of tunable parameter $\bar{\alpha}$. The effect is very similar to the effect of using different constant values of $\bar{\nu}_{n-1}$ in Figure \ref{fig:osatuning}. Smaller values of $\bar{\alpha}$ give better (more stable) late-horizon performance and worse (slower) early-horizon performance.

The harmonic rule is analyzed in Figure \ref{fig:hartuning}. We see that $a = 10$ is a good choice for this problem, with the particular parameter values (variance and discount factor) that we have chosen. Larger values of $a$ are consistently worse, and smaller values are only effective in the very early iterations. However, $a = 10$ yields very good performance, the best out of all the competing stepsize rules.

In fact, it is possible to tune the harmonic rule to perform competitively against OSAVI. However, the best value of $a$ is highly problem-dependent. Figure \ref{fig:harvar} shows that $a = 10$ continues to perform well when $\sigma^2$ is increased to $4$, and even achieves a slightly lower objective value than the approximate optimal rule in the later iterations, although OSAVI performs noticeably better in the early iterations. However, Figure \ref{fig:hardisc} shows that $a = 100$ becomes the best value when the discount factor $\gamma$ is increased to $0.99$. The optimal rule has not been retuned in Figure \ref{fig:hardiscvar}; all results shown are for $\nu_{n-1} = 0.2$. Interestingly, it appears that the optimal choice of $a$ is more sensitive to the discount factor than to the signal-to-noise ratio.

We conclude based on Figures \ref{fig:hartuning} and \ref{fig:hardiscvar} that the best choice of $a$ in the harmonic stepsize rule is very sensitive to the parameters of the problem, and that the best choice of $a$ for one problem setting can perform very poorly for a different problem. By contrast, Figure \ref{fig:opttuning} shows that OSAVI is relatively insensitive to its tunable parameter. A simple value of $\nu_{n-1} = 0.2$ yields good results in all of the settings considered. We claim that OSAVI is a robust alternative to several leading stepsize rules.

\begin{figure}[t]
\centering
\subfigure[$\sigma^2 = 4$.]{
\includegraphics[width=3.1in]{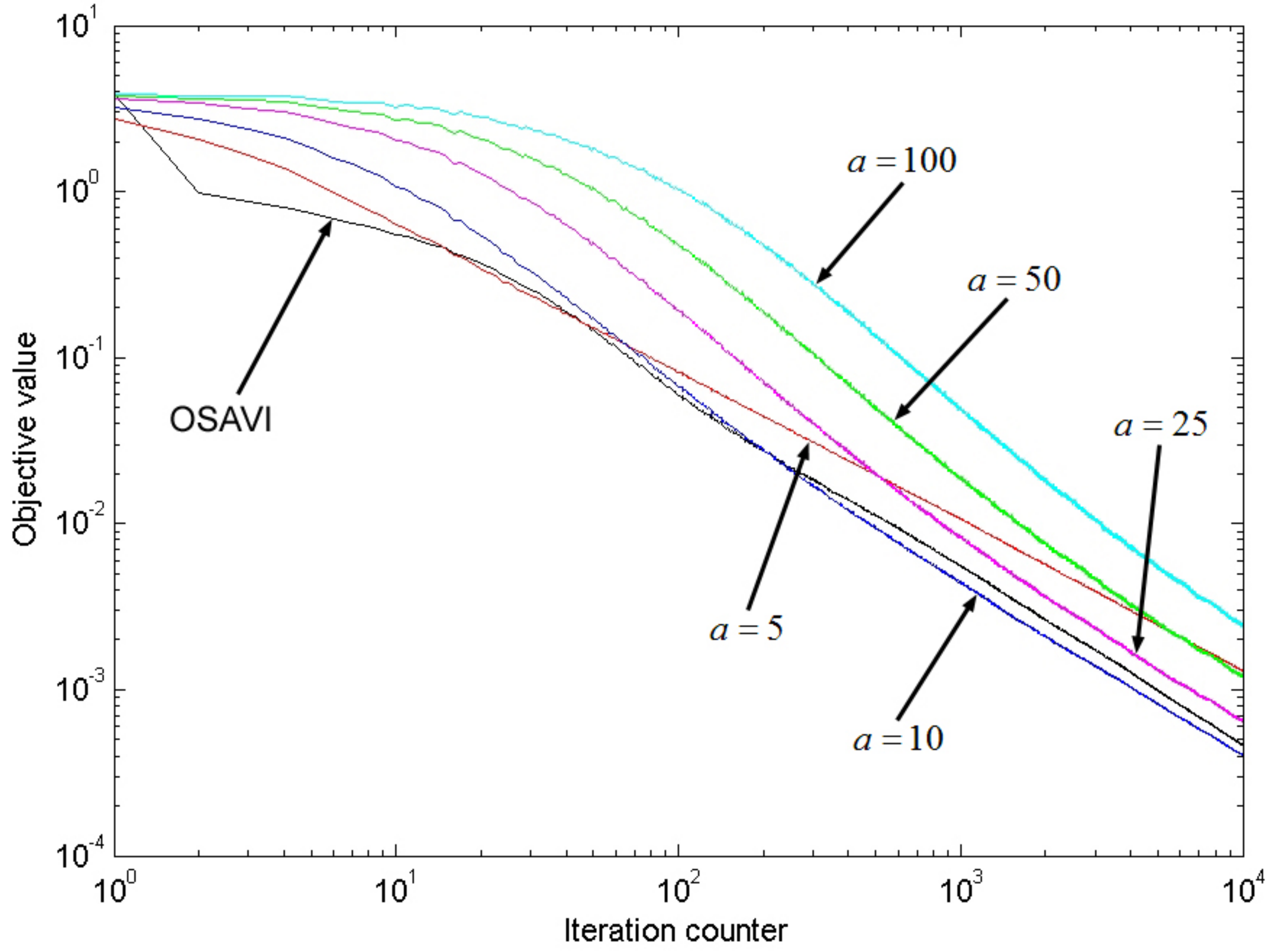}
\label{fig:harvar}
}
\subfigure[$\gamma = 0.99$.]{
\includegraphics[width=3.1in]{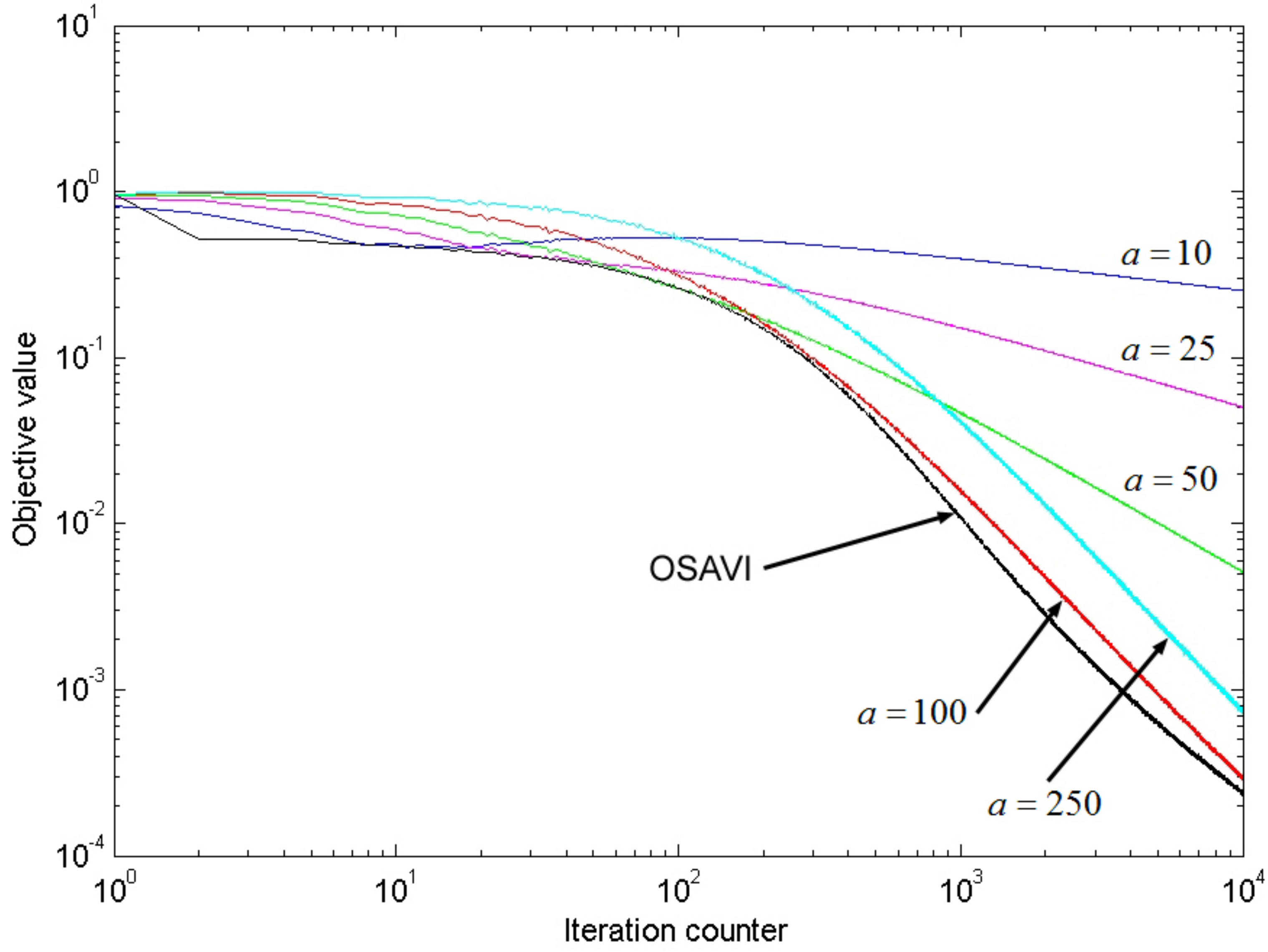}
\label{fig:hardisc}
}
\caption{Sensitivity of the harmonic stepsize rule in different problem settings.}
\label{fig:hardiscvar}
\end{figure}

\section{Experimental study: general MDP}\label{sec:ext}

We also tested the general OSAVI rule from Section \ref{sec:unknownc} on a synthetic MDP with $100$ states and $10$ actions per state, generated in the following manner. For state $S$ and action $x$, with probability $0.8$ the reward $C\left(S,x\right)$ was generated uniformly on $\left[0,2\right]$, and with probability $0.2$ it was generated uniformly on $\left[18,20\right]$. For each $\left(S,x\right)$, we randomly picked $10$ states to be reachable. For each such state $S'$, we generated a number $b_{S,S',x} \sim U\left[0,1\right]$ and let $\frac{b_{S,S',x}}{\sum_{S''} b_{S,S'',x}}$ be the probability of making a transition to $S'$ out of $\left(S,x\right)$. The transition probability to any state not reachable from $\left(S,x\right)$ was zero. In this manner, we obtained a sparse MDP with some high-value states, leading to some variety in the value function. We used value iteration to compute the true optimal value for $\gamma = 0.9$ and $\gamma = 0.99$.

\subsection{Infinite-horizon setting}\label{sec:extinf}

Each stepsize was implemented together with the following off-policy approximate value iteration algorithm. The value function approximation $\Vbar^0\left(S,x\right) = 0$ is defined for each state-action pair $\left(S,x\right)$, as in the Q-learning algorithm. Upon visiting state $S^n$, an action $x^n$ is chosen uniformly at random. Then, a new state $S'$ is simulated from the transition probabilities of the MDP. We then compute
\begin{eqnarray*}
\vhat^n &=& \max_x C\left(S',x\right) + \gamma \Vbar^{n-1}\left(S',x\right),\\
\Vbar^n\left(S^n,x^n\right) &=& \left(1-\alpha_{n-1}\right)\Vbar^{n-1}\left(S^n,x^n\right) + \alpha_{n-1}\vhat^n,
\end{eqnarray*}
where $\alpha_{n-1}$ is chosen according to some stepsize rule. The next state $S^{n+1}$ to be visited in the next iteration is then chosen uniformly at random (not set equal to $S'$).

We briefly discuss the reasoning behind this design. Any policy that uses the value function approximation to make decisions will also implicitly depend on the stepsize used to update that approximation. The stepsize affects the policy, which then affects the sequence (and frequency) of visited states, which in turn affects the calculation of future stepsizes. Ensuring that ``good'' states are visited sufficiently often is very important to the practical performance of ADP algorithms, but this issue (known as the problem of exploration) is quite separate from the problem of stepsize selection, and is outside the scope of our paper. We have sought to decouple the stepsize from the ADP policy by randomly generating states and actions.

We ran the above algorithm for $10^4$ iterations with each of the five stepsize rules from Section \ref{sec:mainexp}. Performance after $N$ iterations can be evaluated as follows. We find the policy $\pi$ that takes the action $\arg\max_x C\left(S,x\right) + \gamma\bar{V}^N\left(S,x\right)$ at state $S$, and then calculate the value $V^{\pi} = \left(I - \gamma P^{\pi}\right)^{-1}C^{\pi}$, where $P^{\pi}_{S,S'}$ is the probability of transitioning from $S$ to $S'$ under the policy $\pi$, and $C^{\pi}(S)$ is the reward obtained by following $\pi$ in state $S$. Then, we calculate
\begin{equation*}
\frac{1}{\left|\mathcal{S}\right|} \sum_S V^*(S) - V^{\pi}(S),
\end{equation*}
where $V^*$ is the true value function obtained from value iteration. This gives us the suboptimality of policy $\pi$. We average this quantity over $10^4$ simulations, each consisting of $N$ iterations of the learning algorithm. With $10^4$ simulations, the standard errors of this performance measure are negligible relative to its magnitude, and are omitted from the subsequent figures and discussion.

\begin{figure}[t]
\centering
\subfigure[$\gamma = 0.9$.]{
\includegraphics[width=3.1in]{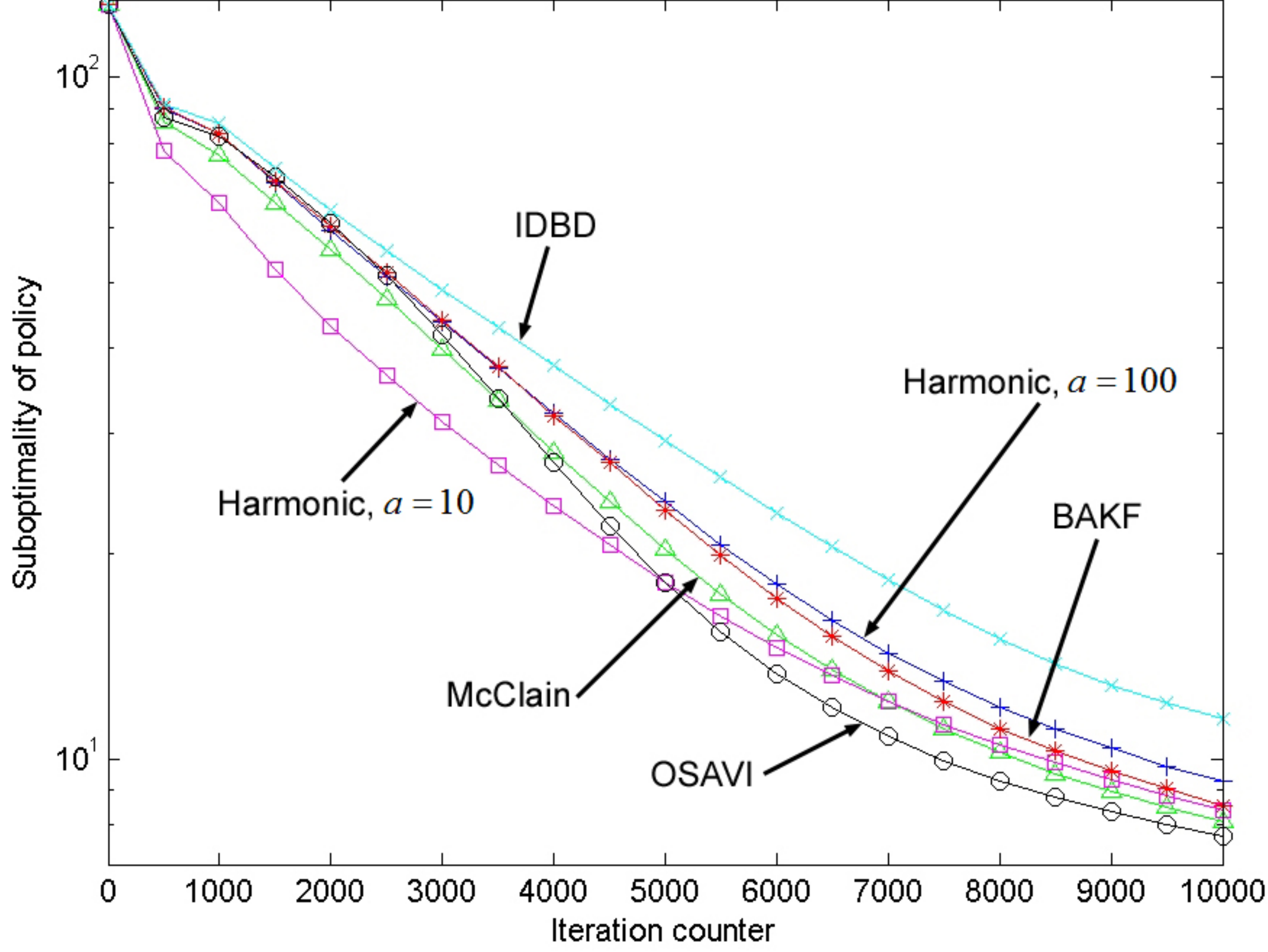}
\label{fig:MDP09}
}
\subfigure[$\gamma = 0.99$.]{
\includegraphics[width=3.1in]{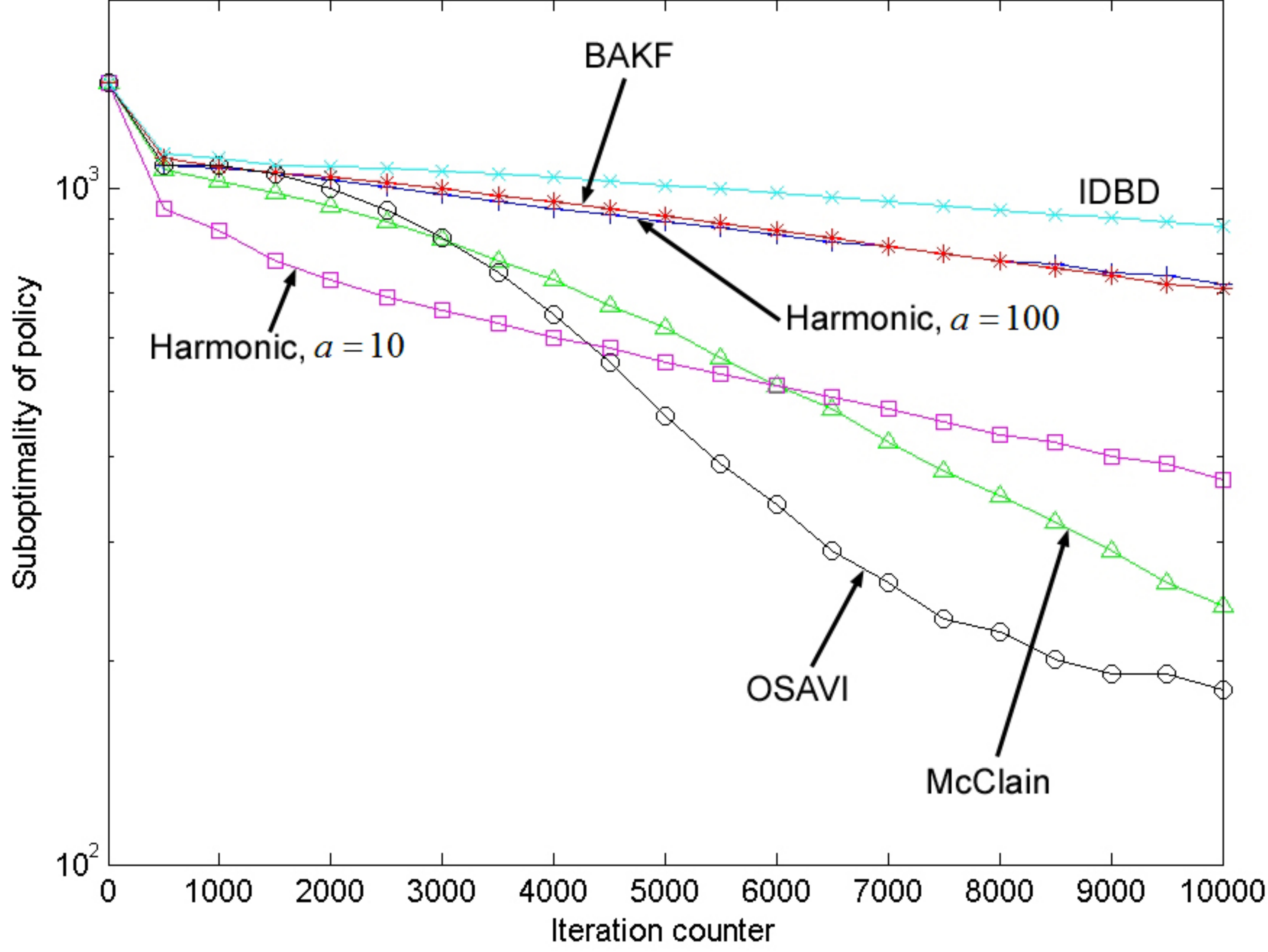}
\label{fig:MDP099}
}
\caption{Suboptimality for each stepsize rule for $\gamma = 0.9$ and $\gamma=0.99$ in the 100-state MDP.}
\label{fig:MDP}
\end{figure}

We compared the following stepsize rules: McClain's rule with $\bar{\alpha} = 0.1$, the harmonic rule with $a = 10$ and $a = 100$ (these are the tuned values that were found in Section \ref{sec:tunable} to work best for $\gamma = 0.9$ and $\gamma = 0.99$, respectively), the BAKF rule of \cite{GePo06} with a secondary stepsize of $0.05$, and OSAVI with a secondary stepsize of $0.2$. For IDBD, we experimented with several orders of magnitudes for $\theta$, and found that $\theta = 0.001$ produced good performance, although the difference between magnitudes was relatively small. We also made the stepsizes state-dependent in order to achieve quicker convergence. For example, the harmonic rule is given by $\alpha_{n-1}\left(S\right) = \frac{a}{a+N^n(S)}$ where $N^n(S)$ is the number of times state $S$ was visited in $n$ iterations. The parameters $\delta^n$, $\lambda^n$ and $\zeta^n$ used by OSAVI and BAKF were also chosen to be state-dependent.

Figure \ref{fig:MDP} shows the average suboptimality achieved by each stepsize rule over time, up to $10^4$ iterations. We see that, for both discount factors, the harmonic rule with $a = 10$ achieves the best performance early on, but slows down considerably in later iterations. OSAVI achieves the best performance in the second half of the time horizon, and the margin of victory is more clearly pronounced for $\gamma =0.99$.

We conclude that OSAVI yields generally competitive performance. The harmonic rule can be tuned to perform well, but performance is quite sensitive to the value of $a$, which is particularly visible in Figure \ref{fig:MDP099}. The secondary parameter for OSAVI was not tuned at all, as we wish to observe that a single constant value is sufficient to produce competitive performance.

\subsection{Finite-horizon setting}\label{sec:extfin}

Finite-horizon problems introduce the dimension that the relative size of the learning bias versus the noise in the contribution depends on the time period. As a result, the optimal stepsize behaviour changes with time. In the finite-horizon case, we run the same off-policy algorithm as before, with the update now calculated via the equations
\begin{eqnarray*}
\vhat^n_t &=& \max_x C\left(S'_t,x\right) + \gamma \Vbar^{n-1}_{t+1}\left(S',x\right)\\
\Vbar^n_t\left(S^n_t,x^n_t\right) &=& \left(1- \alpha_{n-1,t}\right)\Vbar^{n-1}_t\left(s^n_t,x^n_t\right) + \alpha_{n-1}\vhat^n_t
\end{eqnarray*}
for $t = 1,...,T-1$. The true value $V_t\left(S\right)$ of being in state $S$ at time $t$ can be found using backward dynamic programming; in the following, we use the values at $t = 1$ to evaluate all policies. Our performance measure is again the suboptimality of the policy induced by the value function approximation, averaged over all states. We used the same MDP as in Section \ref{sec:extinf} with the horizon $T = 20$, and the discount factor $\gamma =0.99$.

We compared the approximate version of the finite-horizon OSAVI rule from (\ref{eq:optfin}) to McClain's rule with $\bar{\alpha}=0.1$ and the harmonic rule $\alpha_{n-1,t} = \frac{a}{a+n}$ with $a = 10$ and $a = 100$. These rules achieved the best performance in the previous experiments, and can be easily applied to a finite-horizon problem. As before, all stepsizes were made to be state-dependent. Figure \ref{fig:MDPfinite} shows the average suboptimality of each stepsize rule. We see that the harmonic rule is competitive with OSAVI overall. However, the performance of $a = 10$ slows down in later iterations, as in Figure \ref{fig:MDP}. Furthermore, while $a=100$ outperforms OSAVI in the mid- to late iterations, OSAVI has largely closed the gap by the end and continues to improve, while the harmonic rule again slows down.

\begin{figure}[t]
\centering
\subfigure[Average suboptimality.]{
\includegraphics[width=3.1in]{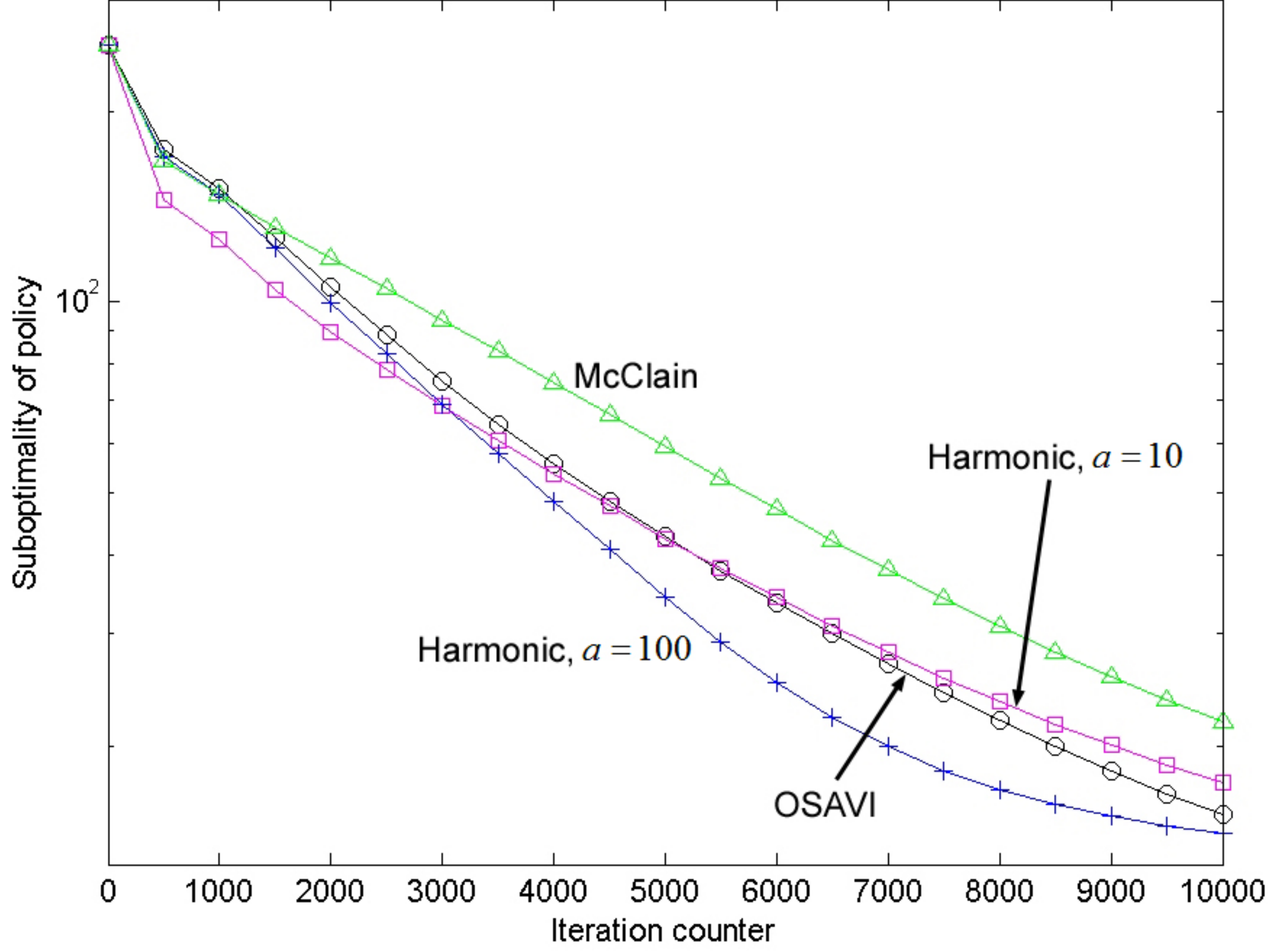}
\label{fig:MDPfinite}
}
\subfigure[Values of $\alpha_{n-1,t}$.]{
\includegraphics[width=3.1in]{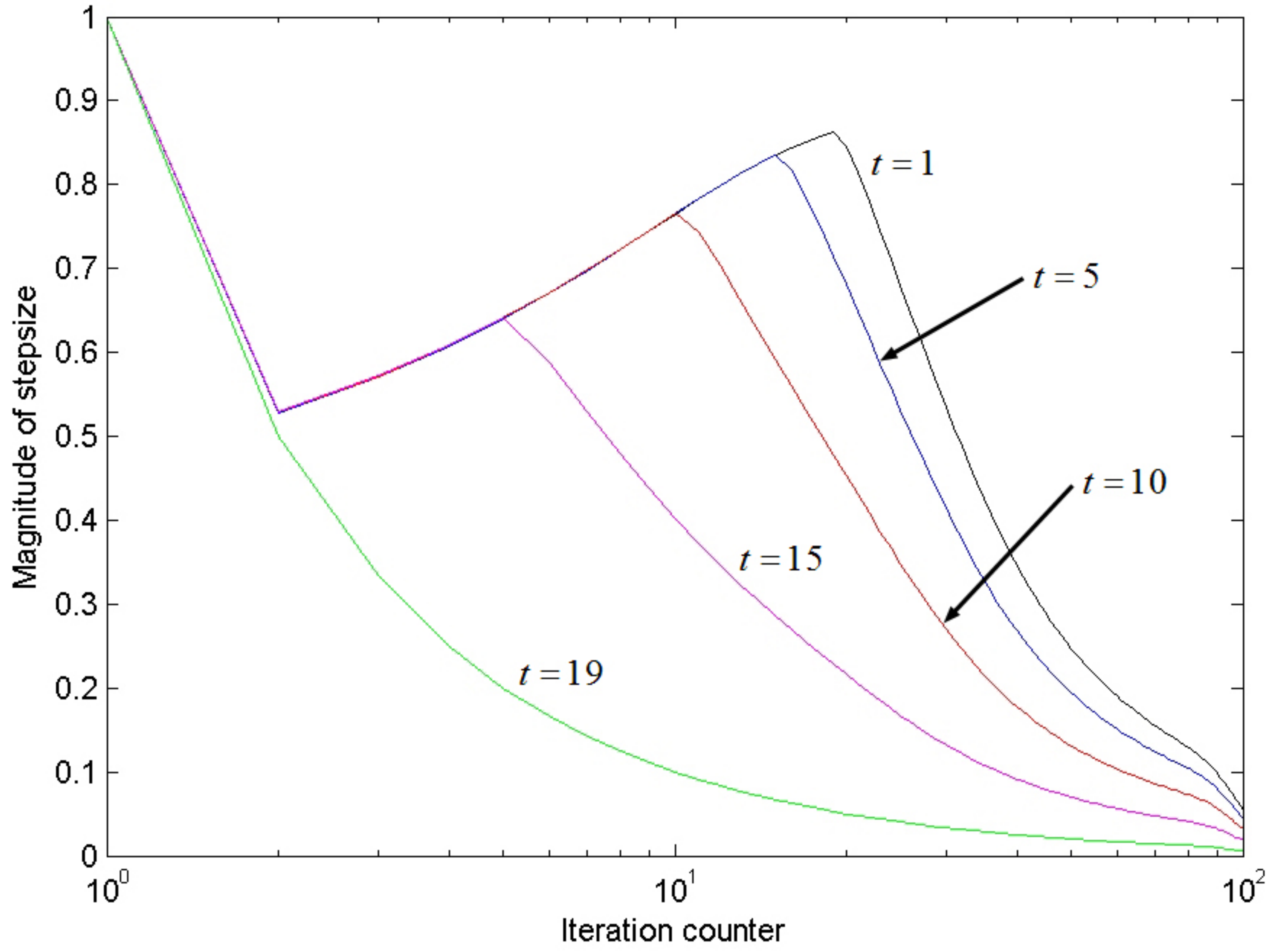}
\label{fig:MDPsizes}
}
\caption{Finite-horizon results: (a) Suboptimality for different stepsize rules. (b) Magnitudes of $\alpha_{n-1,t}$ for finite-horizon OSAVI.}
\label{fig:finiteMDP}
\end{figure}

Finally, Figure \ref{fig:MDPsizes} shows the magnitude of the stepsize $\alpha_{n-1,t}$ produced by the OSAVI formula in a simple synthetic MDP where all $100$ states are reachable from each $(S,x)$ and transition probabilities are normalized i.i.d. samples from a $U\left[0,1\right]$ distribution. Our purpose here is to illustrate the behaviour of the optimal stepsize for different $t$. When $t = 19$, OSAVI is identical to the $1/n$ rule, as we would expect, since this is the last time in the horizon. We assume $V_{20}\left(S\right) = 0$ for all $S$, so the observations $\vhat^n_{19}$ are stationary, and Corollary \ref{corol:stat} applies. For values of $t$ earlier in the time horizon, the optimal stepsize steadily increases, with the largest values of $\alpha_{n-1,t}$ being for $t = 1$. It takes a long time for our observations to propagate backward across the time horizon, and so we need larger stepsizes at time $t = 1$ to ensure that these observations have an effect. We note that, for earlier time periods, OSAVI goes through a period of exploration before settling on a curve which can be closely approximated by $\frac{a}{a+n}$ for a suitably calibrated choice of $a$. For the finite-horizon problem, $a$ should be different for each time period.

\section{Experimental study: ADP for a continuous inventory problem}\label{sec:storage}

The last part of our experimental study demonstrates how OSAVI can be used in conjunction with ADP on a problem where the state space is continuous. We present a stylized inventory problem where a generic resource can be bought and sold on the spot market, and held in inventory in the interim. The basic structure of our problem appears in applications in finance \citep{NaPo10},  energy \citep{LoMi10}, inventory control \citep{simao2009}, and water reservoir management \citep{Cervellera2006a}. We deliberately abstract ourselves from any particular setting, as we wish to keep the focus on the stepsize rule and test it in a generic setting for which ADP is required.

The state variable of the generic inventory problem contains two dimensions. Let $S_t = \left(R_t,P_t\right)$, where $R_t$ denotes the amount of resource currently held in inventory, and $P_t$ denotes the current spot price of the resource. We assume that $R_t$ can take values in the set $\left\{0,0.02,0.04,...,1\right\}$, representing a percentage of the total inventory capacity $\bar{R}$.  The action $x_t$ represents our decision to buy more inventory (positive values) or sell from our current stock (negative values). We assume that we can buy or sell up to $50\%$ of the total capacity in one time step, again in increments of $2\%$. Thus, there are up to $50$ actions in the problem. The reward $C\left(P,x\right) = - P\cdot \bar{R}\cdot x$ represents the revenue obtained (or cost incurred) after making decision $x$ given a price $P$.

While the resource variable $R_t$ is discrete, we assume that the spot price $P_t$ is continuous, and follows a geometric Ornstein-Uhlenbeck (mean reverting) process, a standard price model in finance and other areas. With minor modifications to the problem, $P_t$ could also be changed into an exogenous supply process, which we could draw from to satisfy a demand. The important aspect is that $P_t$ is continuous, which makes it impossible to solve (\ref{eq:valueiteration}) for every state. Furthermore, even for a given state $S_t$, computing the expectation in (\ref{eq:valueiteration}) is difficult, because the transition to the next state $S_{t+1}$ depends on a continuous random variable. For these reasons, we approach the problem using approximate dynamic programming with a discrete value function approximation. To address the issue of the continuous transition to the next state, we use the post-decision state concept introduced in \cite{VaBeLeTs97} and discussed extensively by \cite{Po11}. Given a state $S_t$ and a decision $x_t$, the post-decision state $S^x_t = \left(R^x_t,P^x_t\right)$ is given by the equations
\begin{eqnarray*}
R^x_t &=& R_t + x_t\mbox{,}\\
P^x_t &=& P_t\mbox{.}
\end{eqnarray*}
The next pre-decision state $S_{t+1}$ is then obtained by setting $R_{t+1} = R^x_t$ and simulating $P_{t+1}$ from the price process. Given a value function approximation $\Vbar^{n-1}$, the update $\vhat^n_t$ is computed using
\begin{equation*}
\vhat^n_t = \max_{x_t} C\left(P^n_t,x_t\right) + \gamma\Vbar^{n-1}_t\left(S^{x,n}_t\right).
\end{equation*}
This quantity is then used to update the previous post-decision state, that is,
\begin{equation*}
\Vbar^n_{t-1}\left(S^{x,n}_{t-1}\right) = \left(1-\alpha_{n-1,t-1}\right) \Vbar^{n-1}_{t-1}\left(S^{x,n}_{t-1}\right) + \alpha_{n-1,t-1}\vhat^n_t\mbox{.}
\end{equation*}
Thus, we can adaptively improve our value function approximation without computing an expectation.

For our value function approximation, we used a lookup table where the log-price $\log P_t$ was discretized into $34$ intervals of width $0.125$ between $-2$ and $2$. Thus, the table contained a total of $51 \cdot 34 = 1734$ entries, with each entry initialized to a large value of $10^4$, in keeping with the recommendation in Sec. 4.9.1 of \cite{Po11} to use optimistic initial estimates. However, while the approximation used a discretized state space, our experiments simulated $P_t$ using the continuous price process. The price was only discretized during calls to the lookup table. This is an important detail: while we use a discrete value function approximation, we are still solving the original continuous problem.

The price process was instantiated with $P_0 = 30$, mean-reversion parameter $0.0633$ and volatility $0.2$. Most prices are thus around $\$30$, but sharp spikes are possible. As before, we use a pure exploration policy where each action $x_t$ is chosen uniformly at random. Also as before, we simulate a future state from the price process in order to compute $\vhat^n_t$, but the next state to actually be visited by the algorithm is generated randomly (the resource level is generated uniformly at random, and the log-price is generated uniformly between $-2.125$ and $2.125$). Recall that this is necessary in order to separate the performance of the stepsize from the quality and architecture of the value function approximation.

We used the same policies as in Section \ref{sec:ext}: McClain's rule with $\bar{\alpha} = 0.1$, the harmonic rule with $a = 10$, the BAKF rule of \cite{GePo06} with a secondary stepsize of $0.05$, IDBD with $\theta = 0.001$, and OSAVI with a secondary stepsize of $0.2$. To evaluate the performance of each stepsize rule after $N$ iterations, we fixed the approximation $\Vbar^N$ and then simulated the total reward obtained by making decisions of the form $x_t = \arg\max_x C\left(P_t,x\right) + \gamma V^N_t\left(S^x_t\right)$ in both finite- and infinite-horizon settings. This quantity was averaged over $2.5 \times 10^4$ sample paths. Figure \ref{fig:energy} reports the performance of the approximation obtained using different stepsize rules. Since our objective is to maximize revenue, higher numbers on the $y$-axis represent better quality.

\begin{figure}[t]
\centering
\subfigure[Infinite horizon.]{
\includegraphics[width=3.1in]{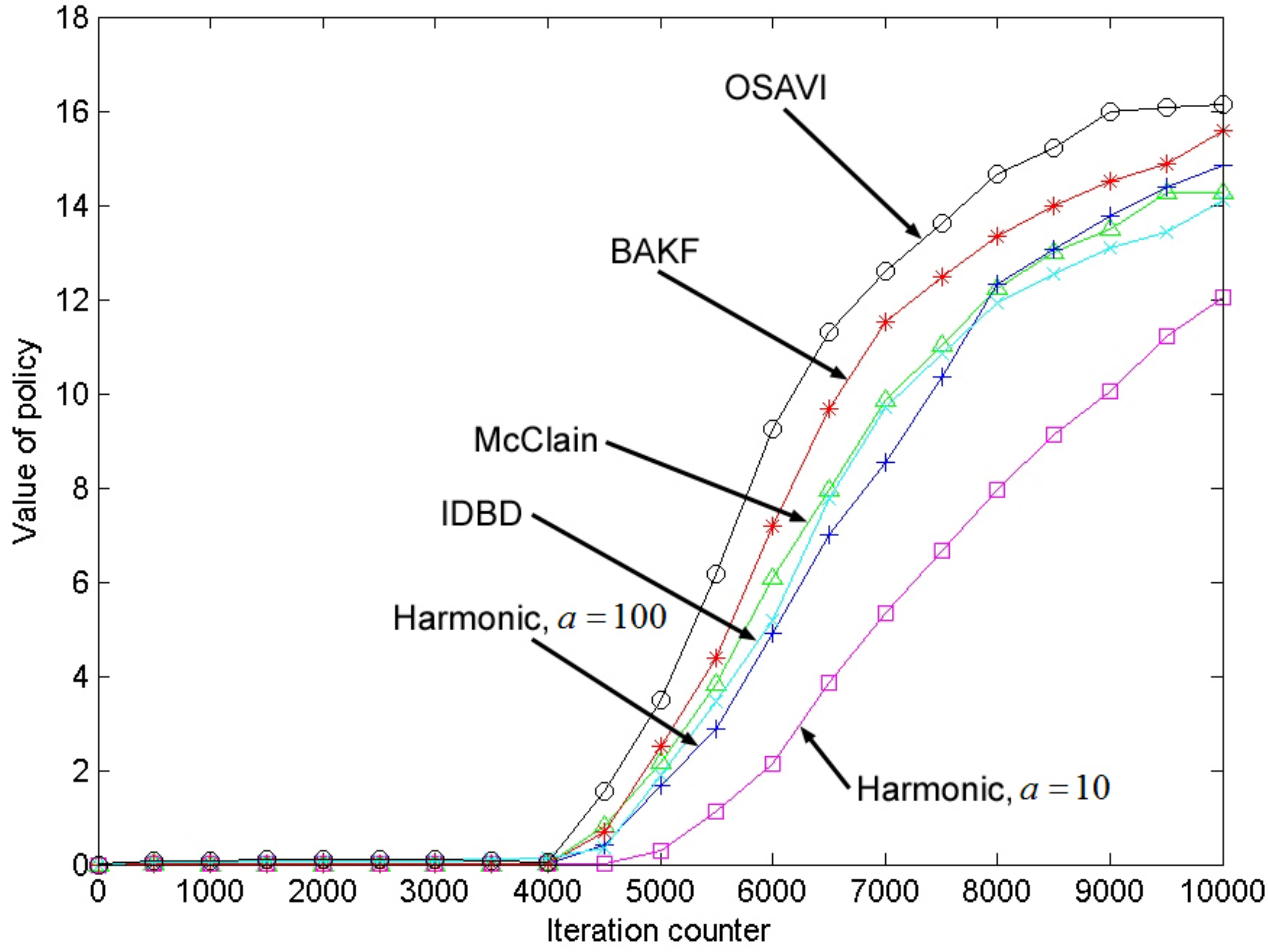}
\label{fig:energyinfinite}
}
\subfigure[Finite horizon.]{
\includegraphics[width=3.1in]{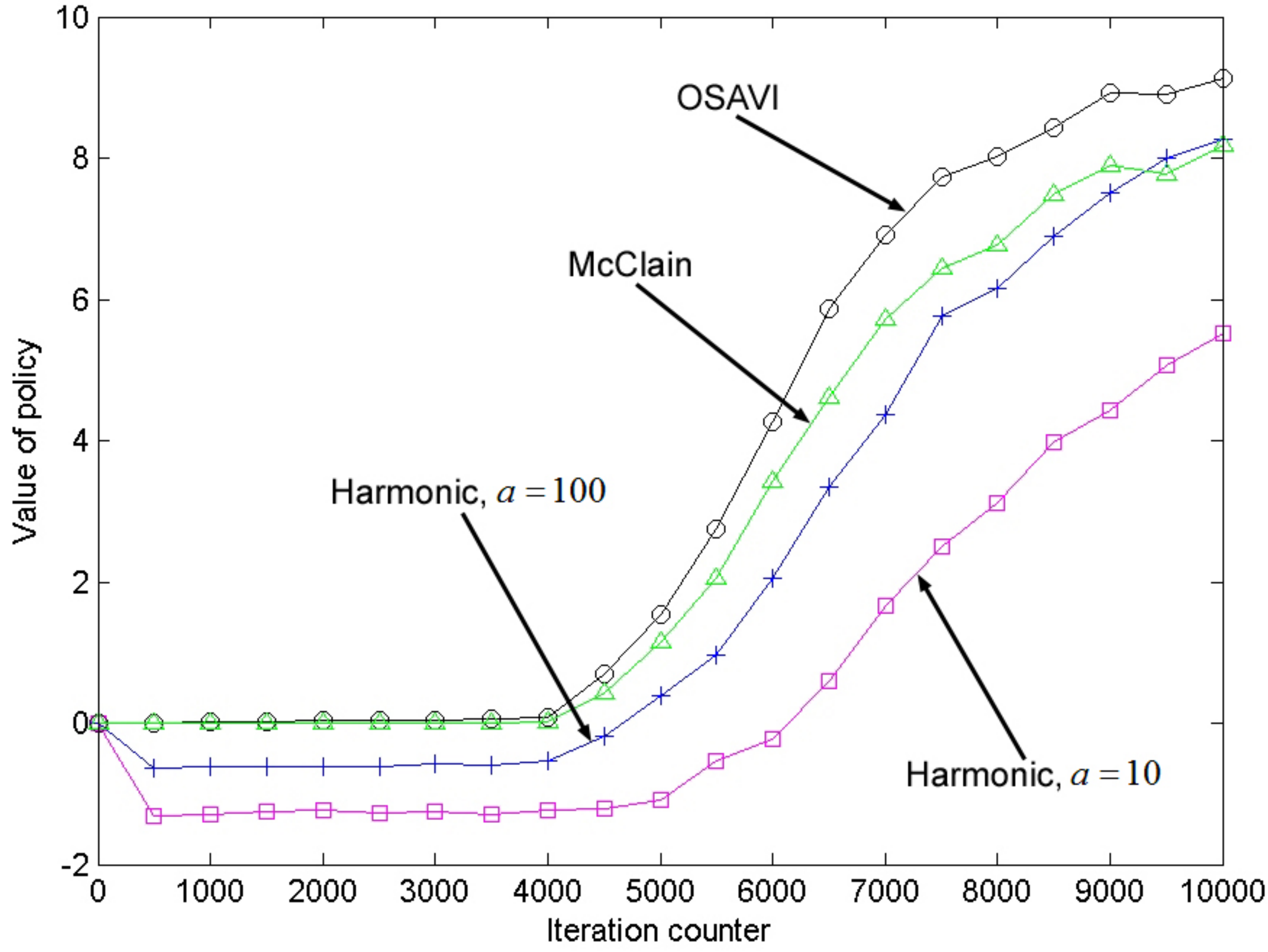}
\label{fig:energyfinite}
}
\caption{Offline policy values for different stepsize rules in the inventory problem with (a) infinite horizon and (b) finite horizon.}
\label{fig:energy}
\end{figure}

Figure \ref{fig:energyinfinite} shows the performance of OSAVI in the infinite-horizon setting. Because of the larger size of the inventory problem, we require several thousand iterations in order to obtain any improvement in the target policy specified by $\Vbar^N$. After $4000$ iterations, we find that OSAVI consistently yields the most improvement in the value of the target policy. Analogously to Figure \ref{fig:MDPfinite} in Section \ref{sec:extfin}, we also compared OSAVI to the harmonic rule in a finite-horizon setting; the results are shown in Figure \ref{fig:energyfinite}. As in the infinite-horizon setting, several thousand iterations are required before any improvement can be observed, but OSAVI consistently outperforms the best version of harmonic.

Our experiments on the inventory problem offer additional evidence that our new stepsize rule can be applicable to more complex dynamic programming problems, which cannot be solved exactly, and where additional techniques such as the post-decision state variable are necessary to deal with continuous state spaces and difficult expectations. Even in the streamlined form considered here, the inventory problem features a continuous price variable, and the value of being in a state depends on the behavior of a mean-reverting stochastic differential equation. The fact that OSAVI retains its advantages over other stepsize rules in this setting is an encouraging sign.

\section{Conclusion}\label{sec:conclusion}

We have proposed a mathematical framework for analyzing stepsize selection in approximate dynamic programming. Our analysis is based on a stylized model of a single-state, single-action MDP. We used this model to derive new rate of convergence results for the popular $1/n$ stepsize rule. Even in this stylized problem, approximate value iteration converges so slowly under the $1/n$ rule as to be virtually unusable for most infinite-horizon applications. This underscores the importance of stepsize selection in general dynamic programming problems.

We have derived a new optimal stepsize minimizing the prediction error of the value function approximation in the single-state model. To our knowledge, this stepsize is the first to take into account the covariance between the observation we make of the value of being in a state, and our approximation of that value, a property that is inherent in approximate value iteration. Furthermore, we are able to compute a closed-form expression for the prediction bias in the single-state, single-action case, considerably simplifying the task of estimating this quantity in the general case. The rule can be easily extended to a general MDP setting, both finite- and infinite-horizon.

We have tested our stepsize rule against several leading deterministic and stochastic rules. In the single-state, single-action case, we consistently outperform the other stepsize rules. While some competing rules (particularly the harmonic rule) can be tuned to yield very competitive performance, they are also very sensitive to the choice of tuning parameter. On the other hand, our stepsize rule is robust, displaying little sensitivity to the parameter used to estimate the one-period reward. We also tested our stepsize rule on a general discrete-state MDP, as well as on a more complex ADP problem. We found that OSAVI performs competitively against the other rules in both finite- and infinite-horizon settings.

We conclude that our stepsize rule can be a good alternative to other leading stepsizes. Our conclusion reflects the particular set of experiments that we chose to run. It is important to remember that deterministic stepsizes such as the harmonic rule can be finely tuned to a particular problem, resulting in better performance than the adaptive rule that we present. The strength of our rule, however, is its ability to adjust to the evolution of the value function approximation, as well as its relative lack of sensitivity to tuning.

\appendix
\section{Slow convergence of $\alpha_{n-1} = 1/n$}

Setting the stepsize to $\alpha_{n} = \frac{1}{n+1}$, we know \citep[e.g. from][]{KuYi97} that the approximation is guaranteed to converge to the optimal value. Let $\vbar^0 = 0$ be the initial approximation. We rewrite (\ref{eq:vbar}) for time $n+1$ using (\ref{eq:vhat}) as
\begin{equation}
\vbar^{n+1} - \vbar^n = \alpha_n(\vhat^{n+1}-\vbar^n) = \frac{1}{n+1}(c - (1-\gamma)\vbar^n),
\label{eq:VbarDifference}
\end{equation}
where the above equations hold for $n\in\mathds{N}^* = \{0,1,2,\ldots\}$. We characterize the slow convergence of approximate value iteration smoothed with a $1/n$ stepsize by bounding $\vbar^n$ above and below by
\begin{equation}
    1 - (n+1)^{-(1-\gamma)}
    \le \frac{\vbar^n}{v^*}
    \le 1 - \frac{\gamma^2 + \gamma - 1}{\gamma}n^{-(1-\gamma)}
        - \frac{1-\gamma}{\gamma}\frac{1}{n}
\end{equation}
for $n\ge1$.
This bound implies that $\vbar^n$ converges particularly slowly for $\gamma$ larger than $.8$.  For example, for $\gamma\ge.9$, this bound tells us that approximate value iteration takes at least $10^{19}$ iterations to reach within $1\%$ of optimal.  This is too many iterations for even the fastest implementation.

We approximate the discrete time update equation (\ref{eq:VbarDifference}) with a continuous time differential equation in which $\vbar^{n+1}-\vbar^n$ is approximated by the derivative of $\vbar^n$ with respect to $n$.  The first step is to extend the definition of $\vbar^n$ from the natural numbers onto the positive reals through a piecewise linear interpolation. We define
\begin{eqnarray}
\vbar(n) &=& (1-(n-\lfloor n \rfloor))\vbar^{\lfloor n \rfloor} + (n-\lfloor n \rfloor)\vbar^{\lceil n \rceil}\label{eq:vbarextensionavg}\\
&=& \vbar^{\lfloor n \rfloor} + (n-\lfloor n \rfloor)(\vbar^{\lceil n \rceil}-\vbar^{\lfloor n \rfloor})
\label{eq:VbarExtension}
\end{eqnarray}
for all $n\in\Rp$, where $\vbar(n)$ is given for $n\in\mathds{N}^*$ by the recursion defined by (\ref{eq:vbar}). Here, $\lfloor n \rfloor$ is the greatest integer less than or equal to $n$, and $\lceil n \rceil$ is the least integer greater than or equal to $n$. As can be seen in (\ref{eq:vbarextensionavg}), we are simply writing $\vbar\left(n\right)$ as a weighted average of its rounded values. Observe that, for $n$ integer, $\vbar(n) = \vbar^n$.

First, we note some well-known properties of the sequence $\{\vbar(n)\}$ in Lemma \ref{lemma:vIncreasingConcaveBounded}.  The proof is straightforward, and we omit it.

\begin{lemma}
\label{lemma:vIncreasingConcaveBounded}
$\vbar(n)$ is increasing and concave in $n$, and bounded above by $\frac{c}{1-\gamma}$.
\end{lemma}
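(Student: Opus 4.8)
The plan is to collapse everything to a single first-order recursion for the error $e^n = v^* - \vbar^n$, where $v^* = \frac{c}{1-\gamma}$, and then read off all three properties from the sign and size of the per-step increments. Starting from (\ref{eq:VbarDifference}) and using $c - (1-\gamma)\vbar^n = (1-\gamma)e^n$, I would rewrite the update as $e^{n+1} = \left(1 - \frac{1-\gamma}{n+1}\right)e^n = \frac{n+\gamma}{n+1}\,e^n$, valid for $n\in\Nz$, with $e^0 = v^*$. The multiplier $\frac{n+\gamma}{n+1}$ lies strictly in $(0,1)$ for every $n\ge 0$ precisely because $0<\gamma<1$, and this single observation drives the whole argument.

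For the upper bound I would argue by induction: $e^0 = v^* > 0$, and multiplying a positive quantity by a factor in $(0,1)$ keeps it positive, so $e^n > 0$, i.e. $\vbar^n < v^*$, for all integer $n$. Since the interpolation (\ref{eq:vbarextensionavg}) writes $\vbar(n)$ as a convex combination of $\vbar^{\lfloor n\rfloor}$ and $\vbar^{\lceil n\rceil}$, both of which lie below $v^*$, the interpolant inherits the bound $\vbar(n)\le\frac{c}{1-\gamma}$ at every real $n$. For monotonicity, the increment is $\Delta^n = \vbar^{n+1}-\vbar^n = \frac{1-\gamma}{n+1}\,e^n$, which is strictly positive by the previous step; on each interval $[n,n+1]$ the interpolant is linear with slope $\Delta^n>0$, so $\vbar(n)$ is increasing in $n$.

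Concavity is the one property that requires a second computation, and is the step I would treat most carefully. Because the interpolant is piecewise linear, concavity is equivalent to the slopes $\Delta^n$ being non-increasing. Using the error recursion I would compute $\frac{\Delta^{n+1}}{\Delta^n} = \frac{(1-\gamma)/(n+2)\cdot e^{n+1}}{(1-\gamma)/(n+1)\cdot e^n} = \frac{n+\gamma}{n+2}$, which is strictly less than $1$ since $\gamma<1<2$. Hence $\Delta^{n+1}<\Delta^n$: the successive linear pieces have strictly decreasing slopes, so the left-hand slope at each breakpoint exceeds the right-hand slope, which is exactly the defining property of a concave piecewise-linear function. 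The only real subtlety, rather than a deep obstacle, is to keep in mind that ``increasing/concave in $n$'' refers to the real-variable extension, so each integer-level inequality must be transferred to the piecewise-linear interpolant via (\ref{eq:vbarextensionavg}); this transfer is immediate once the increments are shown to be positive and decreasing.
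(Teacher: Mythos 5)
Your proof is correct. The paper explicitly omits its own proof of Lemma \ref{lemma:vIncreasingConcaveBounded} (calling it straightforward), so there is nothing to compare against, but your error recursion $e^{n+1} = \frac{n+\gamma}{n+1}e^n$ with $e^n = v^* - \vbar^n$ is the natural argument: it gives positivity of the increments (monotonicity and the bound) and the slope ratio $\frac{\Delta^{n+1}}{\Delta^n} = \frac{n+\gamma}{n+2} < 1$ (concavity of the piecewise-linear interpolant) in one stroke, and you correctly transfer the integer-level statements to the real-variable extension. The only implicit hypothesis, shared with the paper, is $c > 0$, which is needed for $e^0 = v^* > 0$ and indeed for the lemma's conclusions to hold at all.
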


The right derivative of the interpolation function $\vbar$ is given by
\begin{equation}
\dvp(n) = \frac{1}{\lfloor n \rfloor+1}(c-(1-\gamma)\vbar(\floor{n})). \label{eq:dvplus}
\end{equation}
Our strategy is to construct functions $U:\R^2\to\R$ and $L:\R^2\to\R$ such that $L(\vbar(n),n) \le \dvp(n) \le U(\vbar(n),n)$.  Fix any $n_0\in\Rp$.  Then a lower bound for $\vbar$ on $\left[ n_0, \infty\right)$ is given by any solution $l$ to the differential equation
\begin{equation*}
l'(n) = L(l(n),n)
\end{equation*}
with boundary condition $l(n_0) = \vbar(n_0)$.  Similarly, an upper bound for $\vbar$ on $\left[ n_0, \infty\right)$ is given by any solution $u$ to the differential equation
\begin{equation*}
u'(n) = U(u(n),n)
\end{equation*}
with boundary condition $u(n_0) = \vbar(n_0)$.

\begin{lemma}
\label{lemma:BoundDvp}
For $n\in\mathds{N}^*$,
$\dvp(n) \le \frac{\vbar(n+1)}{n+1}$.
\end{lemma}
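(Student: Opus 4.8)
The plan is to reduce the statement to a monotonicity property of the sequence $\vbar(n)/n$ and then extract that property from the concavity established in Lemma~\ref{lemma:vIncreasingConcaveBounded}. The key observation to make first is that for integer $n$ the right derivative is just a forward difference: since $\vbar$ is piecewise linear with breakpoints at the integers, it is linear on $[n,n+1]$ with slope $\vbar(n+1)-\vbar(n)$. Equivalently, setting $\floor{n}=n$ in (\ref{eq:dvplus}) and comparing with (\ref{eq:VbarDifference}) yields
\[
\dvp(n) = \frac{1}{n+1}\bigl(c-(1-\gamma)\vbar(n)\bigr) = \vbar(n+1)-\vbar(n),\qquad n\in\Nz.
\]
For $n=0$ this gives $\dvp(0)=\vbar(1)=\vbar(1)/(0+1)$, so the claim holds with equality and I may assume $n\ge1$ from here on.

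For $n\ge1$ I would substitute $\dvp(n)=\vbar(n+1)-\vbar(n)$ into the target inequality and rearrange:
\[
\vbar(n+1)-\vbar(n)\le\frac{\vbar(n+1)}{n+1}
\iff \vbar(n+1)\,\frac{n}{n+1}\le\vbar(n)
\iff \frac{\vbar(n+1)}{n+1}\le\frac{\vbar(n)}{n}.
\]
Thus it suffices to show that the map $n\mapsto\vbar(n)/n$ is non-increasing. This is where I would invoke the structural result: by Lemma~\ref{lemma:vIncreasingConcaveBounded}, $\vbar$ is concave on $\Rp$, and by construction $\vbar(0)=\vbar^0=0$. For any concave function the chord slopes through a fixed base point are non-increasing, so taking the base point $0$, for $0<s<t$ we have $\frac{\vbar(s)-\vbar(0)}{s}\ge\frac{\vbar(t)-\vbar(0)}{t}$; since $\vbar(0)=0$ this reads $\vbar(s)/s\ge\vbar(t)/t$. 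Applying it with $s=n$ and $t=n+1$ delivers exactly $\vbar(n+1)/(n+1)\le\vbar(n)/n$, which closes the argument.

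The only real subtlety lies in the reformulation in the first step: once the target is recognized as monotonicity of $\vbar(n)/n$, everything reduces to the elementary fact that concavity together with $\vbar(0)=0$ forces the divided difference from the origin to be non-increasing, and no delicate estimation is needed. I would flag that, because $\vbar$ is only piecewise linear rather than differentiable, I deliberately use the chord-slope characterization of concavity rather than a second-derivative computation, which avoids any issue at the integer breakpoints.
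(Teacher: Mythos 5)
Your proof is correct. It rests on the same two ingredients as the paper's proof --- the concavity of $\vbar$ from Lemma~\ref{lemma:vIncreasingConcaveBounded} and the initial condition $\vbar(0)=0$ --- but packages them differently. The paper writes $\vbar(n+1)=\sum_{k=0}^n \dvp(k)$ as a telescoping sum and uses concavity (non-increasing increments) to bound every term below by the last one, giving $\vbar(n+1)\ge(n+1)\dvp(n)$; in other words, it compares the final increment to the average of all the increments. You instead observe that the claimed inequality is exactly the statement that the secant slope $\vbar(n)/n$ from the origin is non-increasing, which is the standard chord-slope consequence of concavity for a function vanishing at $0$. The two arguments are equivalent (the average of the first $n+1$ increments \emph{is} the secant slope from $0$ to $n+1$), so neither buys extra generality, but yours makes the role of $\vbar(0)=0$ explicit, whereas the paper uses it silently by starting the telescoping sum at $k=0$. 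Incidentally, the paper's printed proof has its inequality signs reversed (it asserts $\dvp(k)\le\dvp(n)$ for $k\le n$, which contradicts concavity and would yield the opposite conclusion after dividing by $n+1$); the intended direction is clear, and your version is free of that slip.
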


\begin{proof}
We begin by noting,
$\vbar(n+1) = \sum_{k=0}^n \vbar(k+1)  - \vbar(k) = \sum_{k=0}^n \dvp(k)$.
By the concavity of $\vbar$ as shown in Lemma \ref{lemma:vIncreasingConcaveBounded}, $\dvp(k) \le \dvp(n)$ for all $k\le n$, so $\vbar(n+1) \le \sum_{k=0}^n \dvp(n) = (n+1)\dvp(n)$.  Dividing by $n+1$ completes the proof.
\end{proof}

\begin{theorem}
\label{thm:UpperBound}
For any $n_0 > 0$, $\vbar$ is bounded above by
\begin{equation*}
\vbar(n) \le \frac{c}{1-\gamma}\left[ 1 - bn^{-(1-\gamma)} - \frac{1-\gamma}{\gamma}\frac{1}{n} \right]
\end{equation*}
where
\begin{equation}
b = n_0^{1-\gamma}\left[ 1 - \frac{1-\gamma}{n_0\gamma} - \frac{1-\gamma}{c}\vbar(n_0)\right]. \label{eq:b}
\end{equation}
\end{theorem}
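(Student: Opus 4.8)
The plan is to exhibit the claimed formula as the solution of a linear ODE that dominates the right derivative $\dvp$, and then to invoke the comparison principle described just before the statement. For the upper bound I would take the rate function
\[
U(y,n)=\frac{c-(1-\gamma)y}{n}+\frac{c}{n^{2}},
\]
and establish the two ingredients required by that principle: first, that $\dvp(n)\le U(\vbar(n),n)$ for all $n\ge n_0$; and second, that the function on the right-hand side of the asserted inequality is precisely the solution $u$ of $u'(n)=U(u(n),n)$ with $u(n_0)=\vbar(n_0)$.

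The ODE is linear, $u'+\frac{1-\gamma}{n}\,u=\frac{c}{n}+\frac{c}{n^{2}}$, so multiplying by the integrating factor $n^{1-\gamma}$ and integrating gives
\[
u(n)=\frac{c}{1-\gamma}-\frac{c}{\gamma}\frac{1}{n}+C\,n^{-(1-\gamma)}
\]
for a free constant $C$. Imposing $u(n_0)=\vbar(n_0)$ fixes $C=-\frac{c}{1-\gamma}b$ with $b$ exactly the expression in \eqref{eq:b}; substituting this $C$ turns $u$ into the right-hand side of the claimed bound, and one checks directly that $u(n_0)=\vbar(n_0)$. This portion is routine calculus.

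The substance of the proof is the differential inequality $\dvp(n)\le U(\vbar(n),n)$. Writing $m=\floor{n}$ and $f=n-\floor{n}\in[0,1)$, I would use that $\dvp$ is constant on $[m,m+1)$, so that $\dvp(n)=\frac{c-(1-\gamma)\vbar(m)}{m+1}$, together with the interpolation identity $\vbar(n)=\vbar(m)+f\,\dvp(n)$. Substituting both into $U(\vbar(n),n)$, multiplying through by $n>0$ and rearranging, the desired inequality collapses to
\[
\dvp(m)\bigl(1-(2-\gamma)f\bigr)+\frac{c}{n}\ge 0 .
\]
If $1-(2-\gamma)f\ge0$ this is immediate, since $\dvp(m)\ge0$ by the monotonicity in Lemma \ref{lemma:vIncreasingConcaveBounded} and $c,n>0$. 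If $1-(2-\gamma)f<0$, I would control the negative term by the bounds $\dvp(m)\le\frac{c}{m+1}$ (from $\vbar(m)\ge0$) and $(2-\gamma)f-1<1-\gamma$, comparing them with $\frac{c}{n}=\frac{c}{m+f}>\frac{c}{m+1}$, which closes the gap. I expect this step to be the main obstacle: the crude estimates $\vbar(m)\le\vbar(n)$ and $m+1\ge n$ pull in opposite directions, so $\dvp(n)$ cannot be dominated by $\frac{c-(1-\gamma)\vbar(n)}{n}$ alone, and the extra term $\frac{c}{n^{2}}$ in $U$ — equivalently the $-\frac{1-\gamma}{\gamma}\frac{1}{n}$ correction appearing in the bound — is exactly what absorbs the interpolation discrepancy. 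The contrast with the lower bound, where both crude estimates cooperate and no correction is needed, explains why the upper bound carries this extra term.

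Having verified the inequality on $[n_0,\infty)$ and matched the boundary value, the comparison principle yields $\vbar(n)\le u(n)$ for all $n\ge n_0$, which is the assertion. Specializing to $n_0=1$, where $\vbar(1)=c$, recovers the explicit constant $b=\frac{\gamma^{2}+\gamma-1}{\gamma}$ of Theorem \ref{cor:ChooseOne0}.
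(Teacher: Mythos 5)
Your proposal is correct, and it shares the paper's overall scaffolding --- the same rate function $U(v,n)=\frac{1}{n}\bigl(c-(1-\gamma)v+\frac{c}{n}\bigr)$, the same linear ODE solved with the integrating factor $n^{1-\gamma}$, and the same comparison principle --- but you establish the key differential inequality $\dvp(n)\le U(\vbar(n),n)$ by a genuinely different route. The paper gets there through Lemma \ref{lemma:BoundDvp}, i.e.\ it uses the \emph{concavity} of $\vbar$ to bound $\dvp(\floor{n})\le \vbar(\floor{n}+1)/(\floor{n}+1)$, then the uniform bound $\vbar\le c/(1-\gamma)$ to replace $\vbar(\floor{n}+1)$ by $c/(1-\gamma)$, and finally the crude substitutions $n\le\floor{n}+1$ and $\vbar(\floor{n}+1)\ge\vbar(n)$. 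You instead exploit the exact interpolation identity $\vbar(n)=\vbar(\floor{n})+f\,\dvp(n)$ to reduce the inequality to $\dvp(\floor{n})\bigl(1-(2-\gamma)f\bigr)+c/n\ge0$, and dispatch the two cases using only $0\le\vbar$ and $\dvp\ge0$; I checked the case $f>\tfrac{1}{2-\gamma}$ and the chain $\dvp(\floor{n})\bigl((2-\gamma)f-1\bigr)<\tfrac{c(1-\gamma)}{\floor{n}+1}<\tfrac{c}{n}$ does close. Your version is more elementary (it never needs concavity or Lemma \ref{lemma:BoundDvp}) and it isolates exactly why the $c/n^{2}$ correction --- hence the $-\tfrac{1-\gamma}{\gamma}\tfrac{1}{n}$ term in the final bound --- is forced by the interpolation discrepancy, which the paper's chain of estimates leaves implicit; the paper's version is shorter once Lemmas \ref{lemma:vIncreasingConcaveBounded} and \ref{lemma:BoundDvp} are in hand and reuses machinery already needed elsewhere. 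The routine parts (general solution of the ODE, matching the boundary condition to obtain \eqref{eq:b}, and the specialization $n_0=1$, $\vbar(1)=c$ giving $b=\tfrac{\gamma^2+\gamma-1}{\gamma}$) agree with the paper exactly.
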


\begin{proof}
We begin by rewriting $\vbar(\floor{n})$ as
\begin{eqnarray*}
\vbar(\floor{n})
    &=& \vbar(\floor{n}+1) - \left(\vbar(\floor{n}+1) - \vbar(\floor{n})\right) \\
    &=& \vbar(\floor{n}+1) - \dvp(\floor{n}) \\
    &\ge& \vbar(\floor{n}+1) - \frac{\vbar(\floor{n}+1)}{\floor{n}+1} \\
    &\ge& \vbar(\floor{n}+1) - \frac{c/(1-\gamma)}{\floor{n}+1}
\end{eqnarray*}
where the third step is by Lemma \ref{lemma:BoundDvp}, and the fourth step is by Lemma \ref{lemma:vIncreasingConcaveBounded}.  We combine this with (\ref{eq:dvplus}) to write
\begin{equation*}
\dvp(n) \le \frac{1}{\floor{n}+1}\left( c-(1-\gamma)\vbar(\floor{n}+1)+\frac{c}{\floor{n}+1}\right).
\end{equation*}
Then, the inequality $n \le \floor{n}+1$ implies that
\begin{equation*}
\dvp(n) \le \frac{1}{n} \left( c-(1-\gamma)\vbar(\floor{n}+1)+\frac{c}{n}\right).
\end{equation*}
The same inequality $n \le \floor{n} + 1$ together with Lemma \ref{lemma:vIncreasingConcaveBounded} imply that $\vbar(\floor{n} +1) \ge \vbar(n)$, which implies
\begin{equation*}
\dvp(n) \le \frac{1}{n} \left( c-(1-\gamma)\vbar(n)+\frac{c}{n}\right).
\end{equation*}
Defining $U:\R^2\to\R$ by $U(v,n) = \frac{1}{n} \left( c-(1-\gamma)v+\frac{c}{n}\right)$, $\dvp(n) \le U(v(n),n)$.  We solve the differential equation
\begin{equation}
u'(n) = U(u(n),n) \label{eq:UpperDiffEQ}
\end{equation}
with boundary condition
\begin{equation}
u(n_0) = \vbar(n_0).  \label{eq:UpperDiffEQBoundary}
\end{equation}
The solution, $u$, is an upper bound for $\vbar$ in the sense that $u(n) \ge \vbar(n)$ for all $n\ge n_0$.  We solve for $u$ using the general solution for first order linear differential equations \citep{BoDi97}.  The integrating factor $\mu(n)$ is the integral of the term multiplying $u(n)$,
\begin{equation*}
\mu(n) = \exp \left( \int (1-\gamma)m^{-1}\ dm \right) = \exp \left( (1-\gamma) \log(n) \right) = n^{1-\gamma}.
\end{equation*}
The integral of the right hand side multiplied by the integrating factor is
\begin{equation*}
\int \mu(m) c(m^{-1} + m^{-2})\ dm = c\int m^{-\gamma} + m^{-1-\gamma}\ dm = \frac{c}{1-\gamma} \left( n^{1-\gamma} - \frac{1-\gamma}{\gamma} n^{-\gamma} - b\right),
\end{equation*}
where $b$ is any scalar.
Thus, the solution of (\ref{eq:UpperDiffEQ}) is
\begin{eqnarray*}
u(n)
    &=& \frac{1}{\mu(n)}\int \mu(m) c(m^{-1} + m^{-2})\ dm \\
    &=& \frac{c}{1-\gamma} \frac{n^{1-\gamma} - \frac{1-\gamma}{\gamma} n^{-\gamma} - b}{n^{1-\gamma}} \\
    &=& \frac{c}{1-\gamma} \left( 1 - \frac{1-\gamma}{\gamma}\frac{1}{n} - bn^{-(1-\gamma)}\right),
\end{eqnarray*}
where $b$ is chosen to satisfy the boundary condition (\ref{eq:UpperDiffEQBoundary}).  Solving the relation
\begin{equation*}
\vbar(n_0) = u(n_0) = \frac{c}{1-\gamma} \left( 1 - \frac{1-\gamma}{\gamma}\frac{1}{n_0} - bn_0^{-(1-\gamma)}\right)
\end{equation*}
for $b$ gives (\ref{eq:b}).
\end{proof}

In Theorem \ref{cor:ChooseOne0}, the constant $b$ is strictly positive only for $\gamma > \frac{-1+\sqrt{5}}2 \approx .618$ since $\gamma b = \gamma^2 + \gamma - 1 = \left( \gamma - \frac{-1+\sqrt5}2\right) \left( \gamma - \frac{-1-\sqrt5}2\right)$.  When $b$ is strictly positive, Theorem \ref{cor:ChooseOne0} provides a useful bound on the asymptotic convergence of $\vbar$.  When $b$ is negative, however, as $n$ becomes large the upper bound
$\frac{c}{1-\gamma}\left[ 1 - bn^{-(1-\gamma)} - \frac{1-\gamma}{\gamma}\frac{1}{n} \right]$ becomes larger than the trivial upper bound $\frac{c}{1-\gamma}$ shown in Lemma \ref{lemma:vIncreasingConcaveBounded} and the bound is no longer useful.  To obtain useful bounds from Theorem \ref{thm:UpperBound} for a broader range of $\gamma$ we may increase the $n_0$ chosen.  Increasing $n_0$ also increases $b$ and tightens the bound across all $\gamma$.

\section{Proofs}

\subsection{Proof of Theorem \ref{eq:LowerBound0}}

We first generalize Theorem \ref{eq:LowerBound0} to
\begin{equation}
\vbar(n) \ge \frac{c}{1-\gamma}\left( 1 - (n+1)^{-(1-\gamma)}\right) \mbox{ for all } n\ge0.\label{eq:LowerBound}
\end{equation}
We begin rewriting (\ref{eq:dvplus}) using the inequality $\floor{n} \le n$ as
\begin{equation*}
\dvp(n) \ge \frac{1}{n+1}(c-(1-\gamma)\vbar(\floor{n})).
\end{equation*}
Then, the same inequality $\floor{n} \le n$ together with Lemma \ref{lemma:vIncreasingConcaveBounded} imply that $\vbar(\floor{n}) \le \vbar(n)$, which implies
\begin{equation*}
\dvp(n) \ge \frac{1}{n+1}(c-(1-\gamma)\vbar(n)).
\end{equation*}
Defining $L:\R^2\to\R$ by $L(v,n) = \frac{1}{n+1}(c-(1-\gamma)v)$, $\dvp(n) \ge L(v(n),n)$.  We solve the differential equation
\begin{equation*}
l'(n) = L(l(n),n) = \frac{1}{n+1}(c-(1-\gamma)l(n))
\end{equation*}
with boundary condition $l(0) = \vbar(0)$.

The solution to this differential equation satisfies $l(n) \le \vbar(n)$ for all $n\ge0$ and thus bounds $\vbar$ from below.  We solve for $l$ using the general solution for first order linear differential equations \citep{BoDi97}.  The integrating factor is
$\mu(n) = \exp\left[\int (1-\gamma)\frac{dm}{m+1}\right]= \exp\left[(1-\gamma) \log(n+1)\right]= (n+1)^{1-\gamma}$.
The solution $l$ is given by
\begin{eqnarray*}
l(n)&=& \frac{1}{\mu(n)} \int \mu(m) \frac{c}{m+1}\ dm \\
    &=& c(n+1)^{-(1-\gamma)}\int (m+1)^{1-\gamma} (m+1)^{-1}\ dm \\
    &=& c(n+1)^{-(1-\gamma)}\int (m+1)^{-\gamma}\ dm \\
    &=& c(n+1)^{-(1-\gamma)}\left( \frac{1}{1-\gamma}(n+1)^{1-\gamma} - b\right) \\
    &=& c\left( \frac{1}{1-\gamma} - b(n+1)^{-(1-\gamma)}\right),
\end{eqnarray*}
where $b$ is an integration constant chosen so that $l(0)=\vbar(0)=0$.  We plug in $n=0$ to this equation to see $0 = c\left( \frac{1}{1-\gamma} - b\right)$, implying that $b=\frac{1}{1-\gamma}$.  Thus,
\begin{equation*}
l(n)    = c\left( \frac{1}{1-\gamma} - \frac{1}{1-\gamma}(n+1)^{-(1-\gamma)}\right)
    = \frac{c}{1-\gamma}\left( 1 - (n+1)^{-(1-\gamma)}\right),
\end{equation*}
which completes the proof.

\subsection{Proof of Theorem \ref{cor:ChooseOne0}}

Substituting $n_0=1$ and $\vbar(1)=c$ into (\ref{eq:b}) gives
\begin{equation*}
b = 1 - \frac{1-\gamma}{\gamma} - \frac{1-\gamma}{c}c = - \frac{1-\gamma}{\gamma} + \gamma = \frac{\gamma^2 + \gamma - 1}{\gamma}
\end{equation*}
as required.

\subsection{Proof of Proposition \ref{prop:bounds}}

The bound $\delta^n \leq \frac{1}{1-\gamma}$ is clearly true for $n = 1$, since $\alpha_0 \leq 1$. Suppose now that $\delta^{n-1} \leq \frac{1}{1-\gamma}$ and $\lambda^{n-1} \leq \frac{1}{\gamma\left(1-\gamma\right)}$ for $n > 1$. Then, using the definition of $\delta^n$, we obtain
\begin{equation*}
\delta^n \leq \alpha_{n-1} + \left(1-\left(1-\gamma\right)\alpha_{n-1}\right)\frac{1}{1-\gamma} = \frac{1}{1-\gamma}\mbox{.}
\end{equation*}
Similarly, we can write
\begin{eqnarray*}
\lambda^n &\leq& \alpha^2_{n-1} + \left(1-\left(1-\gamma\right)\alpha_{n-1}\right)^2\frac{1}{\gamma\left(1-\gamma\right)}\\
&=& \frac{1}{\gamma\left(1-\gamma\right)} + \left(1+\frac{1-\gamma}{\gamma}\right)\alpha^2_{n-1} - \frac{2}{\gamma}\alpha_{n-1}\\
&=& \frac{1}{\gamma\left(1-\gamma\right)} + \frac{1}{\gamma}\alpha^2_{n-1} - \frac{2}{\gamma}\alpha_{n-1}\\
&\leq& \frac{1}{\gamma\left(1-\gamma\right)}\mbox{,}
\end{eqnarray*}
as required.

\subsection{Proof of Proposition \ref{prop:convex}}

Let $f\left(\alpha_{n-1}\right)$ be the right-hand side of (\ref{eq:firstcov}). First, observe that
\begin{equation*}
\frac{d^2f}{d\alpha^2_{n-1}} = 2\Exp\left[\left(\vbar^{n-1}-\Exp \vhat^n\right)^2\right] + 2\Exp\left[\left(\vhat^n - \Exp \vhat^n\right)^2\right] - 4Cov\left(\vbar^{n-1},\vhat^n\right)\mbox{.}
\end{equation*}
It is enough to show that
\begin{equation*}
2Cov\left(\vbar^{n-1},\vhat^n\right) \leq \Exp\left[\left(\vbar^{n-1}-\Exp \vhat^n\right)^2\right] + \Exp\left[\left(\vhat^n - \Exp \vhat^n\right)^2\right]\mbox{.}
\end{equation*}
Recall from (\ref{eq:biasvar}) and (\ref{eq:cov}) that
\begin{eqnarray*}
\Exp\left[\left(\vbar^{n-1}-\Exp \vhat^n\right)^2\right] &=& Var\left(\vbar^{n-1}\right) + \left(\Exp\vbar^{n-1}  -\Exp \vhat^n\right)^2,\\
Cov\left(\vbar^{n-1},\vhat^n\right) &=& \gamma Var\left(\vbar^{n-1}\right)\mbox{.}
\end{eqnarray*}
Observe that
\begin{equation*}
\Exp\left[\left(\vhat^n - \Exp \vhat^n\right)^2\right] = Var\left(\vhat^n\right) = \sigma^2 + \gamma^2 Var\left(\vbar^{n-1}\right)
\end{equation*}
and
\begin{equation*}
2\gamma Var\left(\vbar^{n-1}\right) \leq \left(1+\gamma^2\right)Var\left(\vbar^{n-1}\right)
\end{equation*}
since $\gamma^2 - 2\gamma + 1 = \left(\gamma - 1\right)^2 \geq 0$ and $Var\left(\vbar^{n-1}\right) \geq 0$ also. This completes the proof.

\subsection{Proof of Proposition \ref{prop:lowbound}}

We use an inductive argument to show that
\begin{equation*}
\lambda^{n-1} \geq \frac{1}{n-1} \quad \Rightarrow \quad \alpha_{n-1} \geq \frac{1-\gamma}{n} \quad \Rightarrow \quad \lambda^n \geq \frac{1}{n}
\end{equation*}
for all $n > 1$. Assuming $\alpha_0 = 1$, we have $\lambda^1 = 1$ by definition. Suppose now that $\lambda^{n-1} \geq \frac{1}{n-1}$ for some $n > 1$. We rewrite (\ref{eq:opt}) as
\begin{eqnarray*}
\alpha_{n-1} &=& \frac{1-\gamma}{n} - \frac{1-\gamma}{n} + \frac{\left(1-\gamma\right)\lambda^{n-1} \sigma^2 + \left(1 - \left(1 - \gamma\right)\delta^{n-1}\right)^2 c^2}{\left(1-\gamma\right)^2\lambda^{n-1} \sigma^2 + \left(1 - \left(1 - \gamma\right)\delta^{n-1}\right)^2 c^2 + \sigma^2}\\
&=& \frac{1-\gamma}{n} + A^{n-1}\mbox{,}
\end{eqnarray*}
where
\begin{equation*}
A^{n-1} = \frac{n\left(1-\gamma\right)\lambda^{n-1} \sigma^2 + \left(n-\left(1-\gamma\right)\right)\left(1 - \left(1 - \gamma\right)\delta^{n-1}\right)^2 c^2 - \left(1-\gamma\right)^3\lambda^{n-1}\sigma^2 - \left(1-\gamma\right)\sigma^2}{n\left(1-\gamma\right)^2\lambda^{n-1} \sigma^2 + n\left(1 - \left(1 - \gamma\right)\delta^{n-1}\right)^2 c^2 + n\sigma^2}\mbox{.}
\end{equation*}
The denominator of $A^{n-1}$ is clearly positive. To show that the numerator is positive as well, it suffices to show that
\begin{equation*}
n\left(1-\gamma\right)\lambda^{n-1} \sigma^2 \geq \left(1-\gamma\right)^3\lambda^{n-1}\sigma^2 + \left(1-\gamma\right)\sigma^2\mbox{.}
\end{equation*}
Because $1- \gamma \geq \left(1-\gamma\right)^3$ for $\gamma \in \left(0,1\right)$, it remains to show that
\begin{equation*}
\left(n-1\right)\left(1-\gamma\right)\lambda^{n-1}\sigma^2 \geq \left(1-\gamma\right)\sigma^2\mbox{,}
\end{equation*}
but this holds because $\lambda^{n-1} \geq \frac{1}{n-1}$ by the inductive hypothesis. Thus, $\alpha_{n-1} \geq \frac{1-\gamma}{n}$.

Using the result that $\alpha_{n-1} \geq \frac{1-\gamma}{n}$, we show that $\lambda^n \geq \frac{1}{n}$. Let us write
\begin{eqnarray*}
\lambda^{n-1} &=& \frac{1}{n-1} + L^{n-1}\mbox{,}\\
\alpha_{n-1} &=& \frac{1-\gamma}{n} + M^{n-1}\mbox{,}
\end{eqnarray*}
where $L^{n-1},M^{n-1} \geq 0$. Substituting these expressions into the definition of $\lambda^n$, we obtain
\begin{equation*}
\lambda^n = \left(\frac{1-\gamma}{n}+M^{n-1}\right)^2 + \left(1-\left(1-\gamma\right)\left(\frac{1-\gamma}{n}+M^{n-1}\right)\right)^2 \left(\frac{1}{n-1}+L^{n-1}\right)\mbox{.}
\end{equation*}
This expression can be rewritten as
\begin{eqnarray*}
\lambda^n &=& \frac{\left(1-\gamma\right)^2}{n^2} + 2\frac{1-\gamma}{n}M^{n-1} + \left(M^{n-1}\right)^2\\
&\,& + \left(\frac{n-\left(1-\gamma\right)^2}{n} - \left(1-\gamma\right)M^{n-1}\right)^2 \frac{1}{n-1} + L^{n-1}G^{n-1}\mbox{,}
\end{eqnarray*}
where $G^{n-1} \geq 0$. Since $n-1 \leq n-\left(1-\gamma\right)^2$, it follows that $\frac{1}{n-1} \geq \frac{1}{n-\left(1-\gamma\right)^2}$ and
\begin{eqnarray*}
\lambda^n &\geq& \frac{1}{n} + \frac{n}{n-\left(1-\gamma\right)^2}\left(M^{n-1}\right)^2 + L^{n-1}G^{n-1}\mbox{,}
\end{eqnarray*}
whence $\lambda^n \geq \frac{1}{n}$, as required.

\subsection{Proof of Proposition \ref{prop:biasgoestozero}}

We first show that the sequence $\left(\delta^n\right)^{\infty}_{n=1}$ is increasing. Recall from Proposition \ref{prop:bounds} that $\left(1-\gamma\right)\delta^{n-1} \leq 1$. It follows that $\left(1-\gamma\right)\alpha_{n-1}\delta^{n-1} + \delta^{n-1}\leq \alpha_{n-1} + \delta^{n-1}$, whence
\begin{equation*}
\delta^{n-1} \leq \alpha_{n-1} + \left(1-\left(1-\gamma\right)\alpha_{n-1}\right)\delta^{n-1} = \delta^n\mbox{,}
\end{equation*}
which means that $\left(\delta^n\right)$ is increasing. Since this sequence is also bounded by Proposition \ref{prop:bounds}, it has a limit $\delta^* \leq \frac{1}{1-\gamma}$.

Suppose now that $\delta^* < \frac{1}{1-\gamma}$. We rewrite the definition of $\delta^n$ as
\begin{equation*}
\left(\delta^n-\delta^{n-1}\right) + \delta^{n-1} = \alpha_{n-1} + \left(1-\left(1-\gamma\right)\alpha_{n-1}\right) \delta^{n-1}\mbox{.}
\end{equation*}
Subtracting $\delta^{n-1}$ from both sides yields
\begin{equation*}
\left(\delta^n-\delta^{n-1}\right) = \alpha_{n-1}\left(1- \left(1-\gamma\right)\delta^{n-1}\right)\mbox{.}
\end{equation*}
The left-hand side converges to zero as $n\rightarrow\infty$. On the right-hand side, if $\delta^* < \frac{1}{1-\gamma}$, then $1-\left(1-\gamma\right)\delta^{n-1} \rightarrow 1- \left(1-\gamma\right)\delta^* > 0$. It then follows that $\alpha_{n-1} \rightarrow 0$. However, we can see from (\ref{eq:opt}) that this is impossible if $\delta^* < \frac{1}{1-\gamma}$ because, in the limit, both the numerator and denominator will contain the strictly positive term $\left(1-\left(1-\gamma\right)\delta^*\right)^2$. All other terms in both the numerator and denominator of (\ref{eq:opt}) are positive. Therefore, it must be the case that $\delta^* = \frac{1}{1-\gamma}$.

\section{Discussion of OSAVI vs. BAKF}

In this section, we provide additional background for our approach and discuss its relation to the BAKF rule of \cite{GePo06}. This rule was originally presented under the name of OSA (Optimal Stepsize Algorithm). However, because it is not optimal for dynamic programming, we will refer to it by the alternate name of ``bias-adjusted Kalman filter'' given in \cite{Po11}. The BAKF rule is designed for a signal processing problem, in which there is a sequence of independent observations $\hat{X}^n$ with unknown means $\theta^n$ and common variance $\sigma^2$. The unknown means are estimated by the usual exponential smoothing technique
\begin{equation*}
\bar{\theta}^n\left(\alpha_{n-1}\right) = \left(1-\alpha_{n-1}\right)\bar{\theta}^{n-1} + \alpha_{n-1}\hat{X}^n\mbox{.}
\end{equation*}
To compute $\bar{\theta}^n$, the $n$th approximation, the BAKF rule chooses $\alpha_{n-1}$ to minimize
\begin{equation*}
\min_{0\leq\alpha_{n-1} \leq 1} \Exp\left[\left(\bar{\theta}^n\left(\alpha_{n-1}\right) - \theta^n\right)^2\right]\mbox{.}
\end{equation*}
The solution to this problem is given explicitly by the formula
\begin{equation*}
\alpha_{n-1} = 1 - \frac{\sigma^2}{\left(1 + \zeta^{n-1}\right)\sigma^2 + \left(\beta^n\right)^2}
\end{equation*}
where $\zeta^{n-1}$ is given by the recursive formula
\begin{eqnarray*}
\zeta^n = \left\{
\begin{array}{l l}
  \alpha^2_0 & n = 1\\
  \alpha^2_{n-1} + \left(1-\alpha_{n-1}\right)^2\zeta^{n-1} & n > 1
\end{array}
\right.
\end{eqnarray*}
and $\beta^n = \theta^n - \Exp\bar{\theta}^{n-1}$ is the bias in the smoothed estimate from the previous iteration.

The BAKF rule is particularly relevant to our study because it also chooses the stepsize to minimize the expected squared error of each prediction. For both BAKF and OSAVI, the prediction error is the squared difference between the mean of the new observation and the new estimate. In both cases, the resulting optimal stepsize contains one term representing the bias of the approximation, and one term representing the variance.

The crucial difference is as follows. BAKF is designed for a general signal processing problem in which the goal is to track a scalar moving signal. The work by \cite{GePo06} applies the computational formula of BAKF to an application in ADP, but in fact the derivation of BAKF uses a more general setting, whose main assumptions are violated in ADP. First, BAKF assumes that the observations used in smoothing are independent, which is not the case in approximate value iteration. Rather, the guiding principle of approximate value iteration is to bootstrap new observations from old approximations ($\vhat^n$ and $\vbar^{n-1}$ in the single-state, single-action model), due to the impossibility of obtaining unbiased estimates of the unknown value function.

By contrast, OSAVI makes the additional modeling assumption that observations are constructed according to (\ref{eq:vhat}), and thus the prediction error in (\ref{eq:obj}) is recast into a form that reflects the specific structure of ADP. This can be viewed as a special case of the BAKF derivation, but the additional structure imposed on the problem provides two important improvements over BAKF. First, OSAVI explicitly incorporates the dependence between the new observation and the old approximation. This dependence is crucial to the updating structure of ADP, but is not handled by BAKF. Second, the bias term $\beta^n$ in BAKF is unknown in practice (if we knew the bias, we would also know the true value). The work by \cite{GePo06} advocates using sample-based approximations of this quantity, giving rise to a second non-stationary estimation problem. On the other hand, the special structure assumed by OSAVI allows us to derive a closed-form expression for the bias, given by $\left(1-\left(1-\gamma\right)\delta^{n-1}\right)c^2$. In a general MDP, we also need to approximate $c$ (see Section \ref{sec:unknownc}). However, if we interpret $c$ as the average one-period reward earned by following an optimal policy in steady-state, this quantity is stationary, and thus is easier to estimate than the bias. We also see in Section \ref{sec:mainexp} that the secondary estimation procedure is less sensitive to the secondary stepsize $\nu_{n-1}$ for OSAVI than for BAKF.

We conclude our discussion by noting that, on one hand, BAKF provides more generality, and may be more appropriate in a general signal processing problem. On the other hand, in the specific context of approximate value iteration, where observations are constructed via bootstrapping and thus are inherently biased and dependent, OSAVI captures more of the specific structure of ADP, leading to improved performance.


\section*{Acknowledgment}

The authors are grateful to the Topology Atlas forum for several very helpful discussions. 
This research was supported in part by AFOSR contracts FA9550-08-1-0195, FA9550-11-1-0083 and FA9550-12-1-0200,
NSF contracts CMMI-1254298, IIS-142251 and IIS-1247696,
and ONR contract N00014-07-1-0150 through the Center for Dynamic Data Analysis.

\bibliographystyle{IEEEtran}
\bibliography{stepsize17_arxiv}

\end{document}